\theoremstyle{plain}
\newtheorem{theorem}{\bf Theorem}[section]
\newtheorem{proposition}[theorem]{\bf Proposition}
\newtheorem{lemma}[theorem]{\bf Lemma}
\newtheorem{corollary}[theorem]{\bf Corollary}
\theoremstyle{definition}
\newtheorem{remark}[theorem]{\bf Remark}
\numberwithin{equation}{section}
\begin{document}
\title[solutions for fractional Nirenberg problem]
{Multi-bump solutions for fractional Nirenberg problem}

\address[Chungen Liu]{School of Mathematical Sciences and LPMC, Nankai University, Tianjin
300071, P.R. China} \email{liucg@nankai.edu.cn}\thanks{$^1$The first author is partially supported by the NSF of China (11471170).}

\address[Qiang Ren]{School of Mathematical Sciences and LPMC, Nankai University, Tianjin
300071, P.R. China} \email{tjftp@mail.nankai.edu.cn}

\author{Chungen Liu$^1$
 and Qiang Ren}

\date{Completed: \today}

\keywords{Fractional partial differential equations, Lyapunov-Schmidt reduction, critical exponents}

\subjclass[2010]{35R11, 35B25, 35B33}

\begin{abstract}
We consider  the multi-bump solutions of the following fractional Nirenberg problem
\begin{equation}\label{01}
(-\Delta)^s u=K(x)u^{\frac{n+2s}{n-2s}}, \;\;\;\;u>0\;\;\text{ in }\mathbb{R}^n,
\end{equation}
where $s\in (0,1)$ and $n>2+2s$. If $K$ is a periodic function in some $k$ variables with $1\leq k<\frac{n-2s}2$, we proved that \eqref{01} has multi-bump solutions with bumps clustered on some lattice points in $\mathbb{R}^k$ via Lyapunov-Schmidt reduction. It is also established that the equation \eqref{01} has an infinite-many-bump solutions with bumps clustered on some lattice points in $\mathbb{R}^n$ which is isomorphic to $\mathbb{Z}_+^k$.
\end{abstract}
\maketitle

\section{Introduction and main results}
The classic Nirenberg problem asks that on the standard sphere $(\mathbb{S}^n,g_{\mathbb{S}^n})$ with $n\geq 2$, whether there exists a function $w$ such that the scalar curvature( Gauss curvature in the dimension 2) of the conformal metric $g=e^wg_{\mathbb{S}^n}$ equals to a prescribed function $\tilde K$. This probelm is equivalent to solving the following equations
\begin{equation}\label{31}
-\Delta_{g_{\mathbb{S}^n}}w+1=\tilde K e^{2w}\;\;\text{on}\;\;\mathbb{S}^2
\end{equation}
and
\begin{equation}\label{32}
-\Delta_{g_{\mathbb{S}^n}}v+\frac{n-2}{4(n-1)}R_{g_{\mathbb{S}^n}}v=\frac{n-2}{4(n-1)}\tilde Kv^{\frac{n+2}{n-2}}\;\;\text{on}\;\; \mathbb{S}^n\;\; \text{for}\;\; n\geq 3,
\end{equation}
where $R_{g_{\mathbb{S}^n}}=n(n-1)$ is the scalar curvature of $(\mathbb{S}^n,g_{ \mathbb{S}^n})$ and $v=e^{\frac{n-2}4w}$.
The linear operators defined on left-hand side of the equation \eqref{31} and \eqref{32} are called the conformal Laplacian on $\mathbb{S}^n$.

For any Riemannian manifold $(M,g)$, the conformal Laplacian is defined by $P_1^g=-\Delta_g+\frac{n-2}{4(n-1)}R_g$, where $R_g$ is the scalar curvature of $(M,g)$. Let $u>0$ and $h=u^{\frac{4}{n-2}}g$, the conformal Laplacian has the following conformally invariant property
\[P_1^g(u\phi)=u^{\frac{n+2}{n-2}}P_1^h(\phi)\;\;\;\;\text{for}\;\;\phi\in C^{\infty}(M).\]
The Paneitz operator $P_2^g$ is another interesting conformal invariant operator. It was defined   in \cite{Paneitz} by
\[P_2^g=\Delta^2_g+div_g(a_n R_gId-b_n \mathcal{R}ic_g)\nabla_g+\frac{n-4}2Q_g,\]
where $a_n=\frac{(n-2)^2+4}{2(n-1)(n-2)}$, $b_n=-\frac4{n-2}$, $\mathcal{R}ic:TM \to TM $ is a (1,1)-tensor operator defined by $\mathcal{R}ic_i^j=g^{jk}Ric_{ki}$ and $Q_g=-\frac2{(n-2)^2}|Ric_g|^2+\frac{n^3-4n^2+16n-16}{8(n-1)^2(n-2)^2}R_g^2-\frac1{2(n-1)}\Delta_g R_g$ which is called the $Q$-curvature of $(M,g)$.

Later on, more conformally covariant elliptic operators were found.  The operator $P_1^g$ and $P_2^g$ were generalized by Graham, Jenne, Mason and Sparling in \cite{Graham1992} to a sequence of integer order conformally covariant elliptic operators $P_k^g$ for $k\in \mathbb{N}_+$ if $n$ is odd; and $k\in\{1,\dots \frac n2\}$ if $n$ is even. Furthermore,  any real number order conformally covariant pseudo-differential operator was intrinsically defined by Peterson in \cite{Peterson2000}. Graham and Zworski in \cite{Graham2003} proved that the operators $P_k^g$ can be considered as the residue of a meromorphic family of scattering operators $S(s)$ at $s=\frac n2+k$. Then a family of non-integer order conformally covariant pseudo-differential operators $P_s^g$( $0<s<\frac n2$) were naturally defined. Using the localization method in \cite{Caffarelli2007}, Chang and Gonz\'{a}lez \cite{Chang2012} showed that for any $s\in(0,\frac n2)$, the operator $P_s^g$ can also be defined as a Dirichlet-to-Neumann operator of a conformally compact Einstein manifold.

The conformally covariant law for $P_s^g$ means that for any Riemannian manifold $(M,g)$ and a conformal transformation $h=v^{\frac4{n-2s}}g$, $v>0$, there  holds
\begin{equation*}
P^g_s(v\phi)=v^{\frac{n+2s}{n-2s}}P^h_s(\phi)\;\;\;\;\text{for}\;\;\phi\in C^{\infty}(M).
\end{equation*}
Especially, $P^g_s(1)$ is called the $Q_s$ curvature or $s$-curvature of $(M,g)$( see \cite{Chang2012} and \cite{JinTianling2011} for example).

The fractional Nirenberg problem was naturally raised on $Q_s$ curvature, it asks that on the standard sphere $\mathbb{S}^n$, whether there exists a function $v>0$ such that the $Q_s$ curvature of the conformal metric $g=v^{\frac{4}{n-s}}g_{\mathbb{S}^n}$ equals to a prescribed function $\tilde K$. It can be reduced to the existence of the solution of  the following equation
\begin{equation}\label{30}
P_s^{g_{\mathbb{S}^n}}(v)=\tilde Kv^{\frac{n+2s}{n-2s}},\;\; v>0\;\;\text{on}\;\; \mathbb{S}^n,
\end{equation}
where $s\in (0,1)$, $n>2s$ and $\tilde K$ is a given positive function.

It was shown in \cite{Branson1995} that the operator $P_{s}^{g_{\mathbb{S}^n}}$ is an intertwining operator and can be expressed as
\[P_s^{g_{\mathbb{S}^n}}=\frac{\Gamma(B+\frac12+s)}{\Gamma(B+\frac12-s)},\;\;\;\;B= \sqrt{-\Delta_{\mathbb{S}^n}+ \left(\frac{n-1}2\right)^2},\]
where $\Delta_{\mathbb{S}^n}$ is the Beltrami-Laplacian operator. What is more, $P_{s}^{g_{\mathbb{S}^n}}$ is more concrete under the stereographic projection. Let
\[F:\mathbb{R}^n\to \mathbb{S}^n\char92\{\mathcal{N}\},\;\;\;\;x\mapsto\left(\frac{2x}{|x|^2+1},\frac{|x|^2-1}{|x|^2+1}\right)\]
be the inverse of stereographic projection, where $\mathcal{N}$ is the north pole of $\mathbb{S}^n$. Then it holds that
\[P_s^{g_{\mathbb{S}^n}}(\phi)\circ F=|J_F|^{-\frac{n+2s}{2n}}(-\Delta)^s(|J_F|^{\frac{n-2s}{2n}}(\phi\circ F)),\]
where $(-\Delta)^s$ is the fractional Laplacian defined by
\[(-\Delta)^s \phi(x)=C(n,s)P.V.\int_{\mathbb{R}^n}\frac{\phi(x)-\phi(y)}{|x-y|^{n+2s}}dy,\;\;\;\;\text{where}\;\;\phi\in C^{\infty}(\mathbb{R}^n).\]

If we write $u=|J_F|^{\frac{n-2s}{2n}}(v\circ F)$ and $K=\tilde K\circ F$, the equation \eqref{30} is transformed into
\begin{equation}\label{1}
(-\Delta)^s u=K(x)u^{\frac{n+2s}{n-2s}},\;\; u>0\;\;\text{in}\;\; \mathbb{R}^n,
\end{equation}
where $s\in(0,1)$ and $n>2s$. The existence of the solutions to the problem \eqref{30} has been proved under various conditions(see for exmaple \cite{Abdelhedi, Chen2014, ChenYH2015, Chen2016, JinTianling2011, JinTL2015, Abdelhedi2016, Chtioui2016}). The compactness of the solutions to \eqref{30} was studied in \cite{JinTianling2011}. Chen and Zheng \cite{ChenYH2015} found a 2-peak solution when $K(x)=1+\varepsilon \tilde{K}(x)$ has at least two critical points and satisfies some local conditions. What is more, Liu in \cite{Liu2016} constructed infinitely many 2-peak solutions when $K$ has a sequence of strictly local maximum points moving to infinity. When $K$ is a radial symmetric function, in \cite{LiuCG2016} and \cite{Long2016} it was showed that \eqref{1} has infinitely many non-radial solutions.

In this paper, we continue to study the bump solutions or peak solutions of \eqref{1}. Assume that $K$ satisfies the following conditions

\noindent$(H_1)$ $0<\inf_{\mathbb{R}^n} K\leq \sup_{\mathbb{R}^n} K<\infty$;

\noindent$(H_2)$ $K(x)$ is a $C^{1,1}$ function, and  $1$-periodic in the first $k$ variables $x_1, \cdots, x_k$;

\noindent$(H_3)$ $0$ is a critical point of $K$, and in a neighborhood of $0$, there is a number $\beta\in (n-2s,n)$  such that
\begin{equation*}
K(x)=K(0)+\sum_{i=1}^na_i|x_i|^\beta+R(x),
\end{equation*}
where $a_i\not=0$ for $i=1,\dots,n$, $\sum_{i=1}^n a_i<0$, $R(y)\in C^{[\beta]-1,1}$ and $\sum_{j=0}^{[\beta]}|\nabla^j R(y)||y|^{-\beta+j}=o(1)$ as $y\to 0$. Here $\nabla^j R(y)$ denote all of the possible derivatives of $R(y)$ of the order $j$.

We note that the conditions $(H_1)$-$(H_3)$ and the condition $(\mathcal{K})$ in \cite{Liu2016} has some intersecion. When $K$ satisfies both the condition $(\mathcal{K})$ in \cite{Liu2016} and  our conditions $(H_1)$-$(H_3)$, the equation \eqref{1} has infinitely many 2-peak solutions according to \cite{Liu2016}.

In this paper, we will show that equation \eqref{1} has solutions with large number bumps and its bumps located near some lattice points in $\mathbb{R}^k$ with $1\leq k<\frac{n-2s}2$.

Let $Q_m:=([0,m+1)^k\times \mathbf{0})\cap \mathbb{Z}^n$, where $m\in\mathbb{N}_+\cup \{\infty\}$ and $\mathbf{0}$ is a zero vector in $\mathbb{R}^{n-k}$.
\begin{theorem}\label{th1}
Suppose $n>2s+2$, $m\in \mathbb{N}_+\cup\{\infty\}$ and $1\leq k< \frac{n-2s}2$. If $K$ satisfies the conditions $(H_1)$-$(H_3)$, there exists an integer $l_0\in \mathbb{N}$, such that for any integer $l>l_0$, the equation \eqref{1} has a solution with its bumps clustered on $lQ_m$.
\end{theorem}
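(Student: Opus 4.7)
The plan is to carry out a Lyapunov-Schmidt reduction against the standard Aubin-Talenti bubbles $U_{\delta,\xi}(x) = c_{n,s}\delta^{-(n-2s)/2}(1+|(x-\xi)/\delta|^{2})^{-(n-2s)/2}$, which solve the unperturbed critical equation $(-\Delta)^{s}U = U^{(n+2s)/(n-2s)}$ on $\mathbb{R}^{n}$. For each $q\in lQ_{m}$ I attach a bubble $U_{\delta_{q},\xi_{q}}$ with concentration $\delta_{q}$ near an optimal value $\delta_{0}$ (determined by $K(0)$ and the coefficients $a_{i}$) and center $\xi_{q}$ near $q$. The ansatz is $u = W + \phi$ with $W = \sum_{q\in lQ_{m}} U_{\delta_{q},\xi_{q}}$, and I look for $\phi$ in the orthogonal complement $E_{\delta,\xi}^{\perp}$ of the approximate kernel of the linearization at $W$, spanned by the derivatives $\partial_{\delta_{q}}U_{\delta_{q},\xi_{q}}$ and $\partial_{\xi_{q}^{j}}U_{\delta_{q},\xi_{q}}$.

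For each $(\delta,\xi)$ in a small product neighborhood, I would solve the projected equation for $\phi$ by the contraction mapping theorem in a weighted $L^{\infty}$-space adapted to the bubbles, following the fractional framework of \cite{JinTianling2011,ChenYH2015,Liu2016}. The two analytic inputs are (i) uniform invertibility of the linearized operator on $E_{\delta,\xi}^{\perp}$, which relies on the nondegeneracy of a single bubble (now standard for $(-\Delta)^{s}$) together with the fact that bumps are separated by distance $\sim l$; and (ii) a bound on the nonlinear remainder whose size is driven by the $K$-perturbation, of order $\delta^{\beta}$ by $(H_{3})$, plus pairwise bump interactions of order $l^{-(n-2s)}$.

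Substituting $u = W + \phi(\delta,\xi)$ into the associated energy produces a reduced functional $J_{l}(\delta,\xi)$ whose expansion decomposes into (a) an $m$-independent constant per isolated bubble; (b) a $K$-driven piece $\sum_{q}\bigl(c_{0}\sum_{i}a_{i}\delta_{q}^{\beta}|\xi_{q}^{i}-q^{i}|^{\beta} + \text{l.o.t.}\bigr)$ obtained by Taylor-expanding $K$ around each lattice point via $(H_{2})$ and $(H_{3})$; and (c) pairwise interaction terms of order $l^{-(n-2s)}$. Since $a_{i}\neq 0$ and $\sum a_{i}<0$, the single-bubble part has a nondegenerate critical point at $(\delta_{q},\xi_{q})=(\delta_{0},q)$. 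The dimensional condition $k<(n-2s)/2$ ensures that the total interaction felt by a single bump, summed over the lattice $l\mathbb{Z}^{k}$, remains strictly lower order than the $K$-driven variation, both in $J_{l}$ and in its first and second derivatives. A standard perturbation or degree argument then produces a critical point of $J_{l}$ in a shrinking neighborhood of the product $\prod_{q}(\delta_{0},q)$ for every $l>l_{0}$, giving a genuine solution for each finite $m$.

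The main obstacle is uniformity in $m$, which is needed to reach $m=\infty$. The plan is to perform the reduction in a Banach space of functions with bubble-weighted decay that treats all bumps on equal footing, and to control the global off-diagonal coupling via the summability of $\sum_{q'\neq q}|q-q'|^{-(n-2s)}$, which converges for $k<n-2s$ and is majorized with room to spare under $k<(n-2s)/2$, so the contraction constants and all expansion estimates can be taken independent of $m$. The resulting $m$-independent bounds on the finite-$m$ solutions $u^{(m)}$, combined with local regularity for $(-\Delta)^{s}$ and a diagonal extraction, pass to the limit and deliver the desired solution with bumps clustered on $lQ_{\infty}$.
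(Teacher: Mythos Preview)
Your overall architecture---Lyapunov--Schmidt reduction against Aubin--Talenti bubbles, weighted $L^\infty$ spaces uniform in $m$, contraction for the projected equation, and a diagonal extraction to reach $m=\infty$---is exactly the paper's strategy. But there is a genuine gap in the reduced-problem analysis.

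You assert that the single-bubble part of the reduced energy has a nondegenerate critical point at some fixed $(\delta_0,q)$ with $\delta_0$ determined by $K(0)$ and the $a_i$, and that the pairwise interactions of order $l^{-(n-2s)}$ are a strictly lower-order perturbation. This is not how the mechanism works. At the critical exponent the single-bubble energy is scale-invariant except for the $K$-contribution, which by $(H_3)$ behaves like $c\,\delta_q^{\beta}\sum_i a_i$ to leading order; since $\sum_i a_i<0$ this is \emph{monotone} in $\delta_q$ and has no critical point by itself. The concentration scale is fixed only by balancing the $K$-term $\sim\delta^{\beta}$ against the interaction $\sim\delta^{n-2s}/l^{\,n-2s}$, forcing $\delta\sim l^{-(n-2s)/(\beta-n+2s)}\to 0$. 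In particular $\delta_0$ depends on $l$, and the interaction is \emph{exactly the same order} as the $K$-variation, not lower.

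The paper encodes this balance by the rescaling $\lambda=l^{(n-2s)/(\beta-n+2s)}$, so that $\lambda^{\beta}=(\lambda l)^{n-2s}$, and works with order-one concentrations $\Lambda_i\in[C_1,C_2]$. In the expansion of $\partial_{\Lambda_i}J$ both the $K$-term $-c_1\Lambda_i^{-\beta-1}\lambda^{-\beta}$ and the interaction $\sum_{h\neq i}c_2(\Lambda_i\Lambda_h)^{-(n-2s)/2}|X^i-X^h|^{-(n-2s)}$ appear at order $\lambda^{-\beta}$, and the critical $\Lambda_i$'s are obtained from the resulting coupled algebraic system (solved via a Brouwer fixed point, with uniform two-sided bounds on the solution independent of $m$). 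Your plan must be revised accordingly: the interaction cannot be discarded as lower order in the $\delta$-equation, and you will not find a critical point by perturbing off a single-bubble profile. The role of $k<(n-2s)/2$ is not to make the interaction negligible, but to allow a choice of decay exponent $\tau\in(k,(n-2s)/2)$ in the weighted norms so that all lattice sums and linear estimates are uniform in $m$.
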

Notice that $Q_\infty$ is an infinite lattice which isomorphic to $\mathbb{Z}_+^k$. So we get an infinite-many-bump solution of the equation \eqref{2} via Theorem \ref{th1}.

In order to prove Theorem \ref{th1}, we assume $K(0)=1$ with no loss of generality. For any positive integer $l$, define $\lambda=l^{\frac{n-2s}{\beta-n+2s}}$. Then we have $\lambda^\beta=(\lambda l)^{n-2s}$. Using the transformation $u(x)\mapsto \lambda^{-\frac{n-2s}2}u(\frac{x}{\lambda})$, we can change the equation \eqref{1} into
\begin{equation}\label{2}
(-\Delta)^s u=K(\frac{x}{\lambda})u^{\frac{n+2s}{n-2s}},\;\; u>0,\;\;\text{in}\;\; \mathbb{R}^n.
\end{equation}
 The functional corresponding to equation \eqref{2} is
\begin{equation*}
I(u)=\frac12\int_{\mathbb{R}^n}|(-\Delta)^{\frac s2}u|^2-\frac{n-2s}{2n}\int_{\mathbb{R}^n}K\left(\frac{x}{\lambda}\right)(u_+)^{\frac{2n}{n-2s}},\;\;\;\; u\in \dot{H}^s(\mathbb{R}^n),
\end{equation*}
where $u_+=\max\{u,0\}$. The Hilbert space $\dot{H}^s(\mathbb{R}^n)$ is the completion of $C_0^\infty(\mathbb{R}^n)$ under the Gagliardao semi-norm (cf. \cite{diNezza2012} for detail)
\[[u]_{\dot{H}^s(\mathbb{R}^n)}:=\left(\int_{\mathbb{R}^n}\int_{\mathbb{R}^n}\frac{|u(x)-u(y)|^2}{|x-y|^{n+2s}} \right)^{\frac12}=\left(2C(n,s)\int_{\mathbb{R}^n}|(-\Delta)^{\frac s2}|^2\right)^{\frac12},\]
 where $C(n,s)$ is a constant depending on $n$ and $s$. It is well known that $\dot{H}^s(\mathbb{R}^n)$ can be imbedded into $L^{2^*(s)}( \mathbb{R}^n)$ and  the following Hardy-Littlewood-Sobolev inequality holds
\begin{equation}\label{3}
S\left(\int_{\mathbb{R}^n}|u|^{2^*(s)}\right)^{\frac {2}{2^*(s)}}\leq \int_{\mathbb{R}^n}|(-\Delta)^{\frac s2}u|^2,\;\;\;\;\text{for}\;\;u\in C^{\infty}_0(\mathbb{R}^n) ,
\end{equation}
where $s\in (0,1)$, $n>2s$ and $2^*(s)=\frac {2n}{n-2s}$. Lieb \cite{Lieb1983} proved that the extremals corresponding to the best constant $S$ of \eqref{3} are of the form
\[U_{\xi,\Lambda,C_0}=C_0\left(\frac{\Lambda}{1+\Lambda^2|x-\xi|^2}\right)^{\frac {n-2s}2},\]
where $C_0,\Lambda\in \mathbb{R}_+$ and $\xi\in\mathbb{R}^n$.

Choosing a suitable constant $C_0=C_0(n,s)$, we see that the function $U_{\xi,\Lambda}:=U_{\xi,\Lambda,C_0}$ solves the equation
\begin{equation}\label{4}
(-\Delta)^s u=u^{\frac{n+2s}{n-2s}},\;\;\;\; u>0 \text{ in }\mathbb{R}^n.
\end{equation}
Under some decay assumptions, \cite{Chen2006,LiYY2004, LiYY1995} proved that all the solutions of \eqref{4} are only of the form $U_{\xi,\Lambda}$. Furthermore, it was proved in \cite{Davila2013} that the solution $U_{\xi,\Lambda}$ of the equation \eqref{4} is nondegenerate, \textit{i.e.} any bounded solution of the equation $(-\Delta)^s \phi=\frac{n+2s}{n-2s}U_{\xi,\Lambda}^{\frac{4s}{n-2s}}\phi$ is a linear combination of $\frac{\partial U_{\xi,\Lambda}}{\partial \Lambda}$ and $\frac{\partial U_{\xi,\Lambda}}{\partial \xi_i}$, $i=1,2\dots,n$.

We will use the functions $U_{\xi,\Lambda}$ to construct the approximate solutions of the equation \eqref{2}. We define $X_{l,m}=\{\lambda l x|x\in Q_m\}$ and  arrange it in any way as a sequence $X_{l,m}=\{X^i\}_{i=1}^{(m+1)^k}$. Let $P^i\in B_{\frac12}(X^i)=\{X\in \mathbb{R}^n|\;|X-X^i|<\frac 12\}$, $\Lambda_i\in[C_1,C_2]$, for $i=1,2\dots, (m+1)^k$, where $C_1$ and $C_2$ are some positive numbers to be defined later( see \eqref{63}). Let
\[W_{m}(x):=\sum_{i=1}^{(m+1)^k}U_{P^i,\Lambda_i}(x)\]
to be an approximate solution of the problem \eqref{2}.
\begin{theorem}\label{th2}
Under the same conditions of Theorem \ref{th1}, there exists an interger $l_0>0$, such that for any integer $l>l_0$, equation \eqref{2} has a $C^2_{loc}$ solution $u_m$ of the form
\begin{equation*}
u_m=W_m+\phi_m,
\end{equation*}
where $m\in\mathbb{N}_+\cup\{\infty\}$, $|\phi|_{L^\infty(\mathbb{R}^n)}\to 0$ and $\displaystyle\max_{i=1,\dots,(m+1)^k}\{|P^i-X^i|\}\to 0$ as $l\to\infty$.
\end{theorem}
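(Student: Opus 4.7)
I would perform a Lyapunov--Schmidt reduction in the parameter space
\[
\mathcal P := \bigl\{(P,\Lambda)=(P^i,\Lambda_i)_{i=1}^{(m+1)^k}\DP P^i\in B_{1/2}(X^i),\ \Lambda_i\in[C_1,C_2]\bigr\}.
\]
For each configuration in $\mathcal P$ I first solve for a correction $\phi=\phi(P,\Lambda)$ lying in the orthogonal complement of the approximate kernel $E_{P,\Lambda}:=\mathrm{span}\{\partial_{\Lambda_i}U_{P^i,\Lambda_i},\ \partial_{P^i_j}U_{P^i,\Lambda_i}\}$, so that the residual $(-\Delta)^s(W_m+\phi)-K(\cdot/\lambda)(W_m+\phi)_+^{(n+2s)/(n-2s)}$ lies in $E_{P,\Lambda}$. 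I then locate a critical point of the reduced functional $\widetilde I(P,\Lambda):=I(W_m+\phi(P,\Lambda))$ in the interior of $\mathcal P$; by the usual orthogonality argument this produces a critical point of $I$, hence a $C^2_{loc}$ solution of \eqref{2}.

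\textbf{Linear theory and correction.} All estimates would be carried in a weighted norm such as $\|h\|_{*}:=\sup_x\bigl(\sum_i(1+|x-P^i|)^{-\tau}\bigr)^{-1}|h(x)|$ for a suitable $\tau$, matched to the Green kernel of $(-\Delta)^s$. The error $E_m:=(-\Delta)^sW_m-K(\cdot/\lambda)W_m^{(n+2s)/(n-2s)}$ splits into single-bump errors of size $O(\lambda^{-\beta})$, controlled by $(H_3)$ and the scaling identity $\lambda^{\beta}=(\lambda l)^{n-2s}$, and cross terms of size $O((\lambda l)^{-(n-2s)})$ from the nonlinearity applied to the sum. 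The assumption $k<\tfrac{n-2s}{2}$ is used precisely here to make $\sum_{j\ne i}|X^i-X^j|^{-(n-2s)}$ summable uniformly in $m$, so that $\|E_m\|_{*}\to 0$ as $l\to\infty$ uniformly in $m$. Combined with the non-degeneracy of each bubble \cite{Davila2013} and a standard blow-up/contradiction argument localized at each $P^i$, the linearization $L_m\phi:=(-\Delta)^s\phi-\tfrac{n+2s}{n-2s}K(\cdot/\lambda)W_m^{4s/(n-2s)}\phi$ is uniformly invertible on $E_{P,\Lambda}^{\perp}$; a contraction mapping in a small ball then produces the unique $\phi(P,\Lambda)$ with $\|\phi\|_{L^\infty}\to 0$ as $l\to\infty$, uniformly in $m$.

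\textbf{Reduced energy.} Expanding $\widetilde I$ with the above bounds gives, at leading order, a sum of single-bump contributions---each of the form $A_0+\lambda^{-\beta}\Phi(\Lambda_i,P^i-X^i)$ with $\Phi$ built from the $K$-expansion in $(H_3)$---plus a pair-interaction term of order $(\lambda l)^{-(n-2s)}$ coming from the cross pieces of the nonlinearity. By the choice of $\lambda$ these two contributions are comparable, and the condition $\sum_j a_j<0$ is what makes each function $\Lambda_i\mapsto\Phi(\Lambda_i,0)$ admit a strict extremum on a compact subinterval of $(0,\infty)$; this fixes the constants $C_1,C_2$ in \eqref{63}. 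The $K$-driven term pins each $P^i$ toward $X^i$ against the pair interaction, and a constrained max--min (or Brouwer-degree) argument on $\mathcal P$ then yields an interior critical point $(P_m,\Lambda_m)$ with $\max_i|P^i_m-X^i|\to 0$ as $l\to\infty$.

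\textbf{Main obstacle: $m=\infty$.} The chief difficulty is the infinite-bump case, where $\mathcal P$ ceases to be finite-dimensional; every estimate above has been designed to be uniform in $m$, a point that rests exactly on the summability $\sum_{j\ne i}|X^i-X^j|^{-(n-2s)}<\infty$ guaranteed by $k<\tfrac{n-2s}{2}$. I would handle $m=\infty$ either by (i) running the whole reduction in the Banach space $\ell^\infty$ of parameter sequences and replacing Brouwer degree by a product-degree argument, exploiting that each partial derivative $\partial_{P^i}\widetilde I,\ \partial_{\Lambda_i}\widetilde I$ is governed by its own bump and an $O(1)$ number of neighbours, or (ii) extracting $u_\infty$ as a $C^2_{loc}$ subsequential limit of the finite-$m$ solutions $u_m$ using the uniform bounds on $\phi_m$; either route gives the claimed solution $u_\infty=W_\infty+\phi_\infty$ with $|\phi_\infty|_{L^\infty}\to 0$ and $\max_i|P^i-X^i|\to 0$ as $l\to\infty$.
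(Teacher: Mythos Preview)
Your proposal is correct and follows essentially the same route as the paper: a Lyapunov--Schmidt reduction in weighted $L^\infty$ norms with all constants uniform in $m$, then for finite $m$ a Brouwer-type argument on the reduced gradient system (the paper rewrites $\partial_{P^i_j}J=\partial_{\Lambda_i}J=0$ as a fixed-point map on a small box, using an auxiliary function $F(z)=\tfrac{c_2}{2}\sum_{h\ne i}A_{ih}z_iz_h-\tfrac{(n-2s)c_1}{2\beta}\sum_h z_h^{2\beta/(n-2s)}$ whose maximizer and $\ell^\infty$-invertible Hessian supply the leading-order $\Lambda_i$), and finally for $m=\infty$ exactly your option~(ii), a $C^{2}_{loc}$ subsequential limit of the $u_m$ using the uniform $C^{0,\alpha}$ bounds on $\phi_m$ and local Schauder bootstrap.
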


As a consequence of Theorem \ref{th2}, we have
\begin{corollary}\label{coro1}
Under the same conditions of Theorem \ref{th1}, the equation \eqref{1} has infinitely many multi-bump solutions.
\end{corollary}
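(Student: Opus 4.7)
The plan is to derive Corollary \ref{coro1} directly from Theorem \ref{th2} by letting $m$ vary and undoing the rescaling that transformed \eqref{1} into \eqref{2}. Fix any integer $l > l_0$ with $l_0$ given by Theorem \ref{th2}, and set $\lambda = l^{(n-2s)/(\beta-n+2s)}$. For each $m \in \mathbb{N}_+$, Theorem \ref{th2} produces a positive $C^2_{\mathrm{loc}}$ solution $u_m = W_m + \phi_m$ of \eqref{2}, where $W_m$ is a sum of $(m+1)^k$ standard bubbles centred near the lattice points of $X_{l,m}$ and $\phi_m$ is a small remainder. The inverse rescaling $v_m(y) := \lambda^{(n-2s)/2} u_m(\lambda y)$ is then a positive $C^2_{\mathrm{loc}}$ solution of \eqref{1}, so the corollary reduces to showing that $\{v_m\}_{m \in \mathbb{N}_+}$ contains infinitely many geometrically distinct elements.

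To separate the $v_m$'s I would use a peak-count invariant. Because $\Lambda_i \in [C_1, C_2]$, each bubble $U_{P^i, \Lambda_i}$ has a peak value bounded below by a positive constant $c_\star$ independent of $m$ and $l$. The lattice spacing in $X_{l,m}$ is $\lambda l \to \infty$ as $l \to \infty$ and the bubbles decay like $|x - P|^{-(n-2s)}$; combining this with the hypothesis $k < (n-2s)/2$, which guarantees that the relevant lattice sum $\sum_{j} j^{-(n-2s)/k}$ converges, the contribution of all other bubbles at any given peak is uniformly $O((\lambda l)^{-(n-2s)})$ in $m$. Enlarging $l_0$ if necessary so that this interference plus $\|\phi_m\|_{L^\infty(\mathbb{R}^n)}$ stays below $c_\star/2$, the function $u_m$ has exactly $(m+1)^k$ local maxima of value exceeding $c_\star/2$. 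This integer is preserved under the dilation $u_m \mapsto v_m$ (with the threshold transported to $\lambda^{(n-2s)/2} c_\star/2$), so $v_m \ne v_{m'}$ whenever $m \ne m'$, and we obtain infinitely many distinct solutions of \eqref{1}.

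The only mildly delicate point is making the peak count rigorous: one must verify that $\phi_m$ neither creates an additional local maximum above threshold away from $X_{l,m}$ nor annihilates a bubble peak near it. Both are ruled out by the $L^\infty$ smallness of $\phi_m$ against the uniform threshold $c_\star$, together with the uniform control on $W_m$. Should the convergence $\|\phi_m\|_{L^\infty} \to 0$ fail to be uniform in $m$, the same distinctness can instead be read off from the total energies $I(v_m)$: the multi-bump construction underpinning Theorem \ref{th2} yields $I(v_m) = (m+1)^k I(U) + o(1)$ as $l \to \infty$, where $U$ is a single Aubin--Talenti bubble, and distinct $m$ then force distinct energies. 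I expect no further obstacle; Corollary \ref{coro1} is essentially a packaging of Theorem \ref{th2}.
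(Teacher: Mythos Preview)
Your proposal is correct. The paper's own proof is precisely your fallback: it records the expansion $I(W_m+\phi_m)=(m+1)^k\bigl(\frac{s}{n}\int_{\mathbb{R}^n} U_{0,1}^{2n/(n-2s)}+o(1)\bigr)$ as $l\to\infty$ (citing \cite{WeiYan2010} for the computation) and concludes that distinct values of $m$ give distinct energy levels, hence distinct solutions. Nothing more is said.

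Your primary route via a peak-count invariant is a genuinely different, more geometric argument; it bypasses the energy expansion entirely and uses only the uniform-in-$m$ $L^\infty$ smallness of $\phi_m$ furnished by Proposition~\ref{prop1}. The point you flag as ``mildly delicate'' is real: asserting \emph{exactly} $(m+1)^k$ local maxima above $c_\star/2$ would require $C^1$ control of $\phi_m$ near each centre to exclude peak-splitting, which you do not have from $L^\infty$ bounds alone. But the difficulty evaporates if you replace the local-maximum count by a single pointwise comparison: for $m<m'$ pick $X^j\in X_{l,m'}\setminus X_{l,m}$ with associated bubble centre $P^j$ for $u_{m'}$; then $u_{m'}(P^j)\ge C_0\Lambda_j^{(n-2s)/2}-\|\phi_{m'}\|_{L^\infty}>c_\star/2$, whereas $u_m(P^j)\le W_m(P^j)+\|\phi_m\|_{L^\infty}\le C(\lambda l)^{-(n-2s)}+o(1)<c_\star/2$, since $P^j$ lies at distance at least $\lambda l/2$ from every centre of $W_m$ and the relevant lattice sum converges. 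This distinguishes the $u_m$ (and hence the rescaled $v_m$) using only $L^\infty$ input and is arguably more elementary than the paper's energy computation; the paper's approach, by contrast, reads the distinction off a single scalar invariant already implicit in the variational framework.
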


Theorem \ref{th1} follows from Theorem \ref{th2}. So we only need to prove Theorem \ref{th2}. In this article we use $l$ as the pertubation parameter and follow the methods developed in \cite{LiYY2016,WeiYan2010}. In the section \ref{sect2}, we carry out the Liapunov-Schmidt reduction. Theorem \ref{th2} is proved in the section \ref{sect3}. Some useful estimations are  presented in Appendix \ref{sect4}. The expansions of the functional $\frac \partial {\partial \Lambda_i}I(W_m)$ and $\frac \partial {\partial P^i_{ j}}I(W_m)$ are shown  in Appendix \ref{sect5}.

In this article, $C$ denotes a varying constant independent of $m$.
\section{Finite dimensional reduction}\label{sect2}
In this section, we will carry out the Lyapunov-Schmidt reduction in the case of $m<\infty$.

We define two weighted norms
\[\|u\|_{*}=\sup_{y\in \mathbb{R}^n}\left(\gamma(y)\sum_{i=1}^{(m+1)^k}\frac1{(1+|y-X^i|)^{\frac{n-2s}2+\tau}}\right)^{-1}|u(y)|,\]
and
\[\|u\|_{**}=\sup_{y\in \mathbb{R}^n}\left(\gamma(y)\sum_{i=1}^{(m+1)^k}\frac1{(1+|y-X^i|)^{\frac{n+2s}2+\tau}}\right)^{-1}|u(y)|,\]
where
\[\gamma(y)=\min\left\{\min_{i=1,\dots,(m+1)^k}\left(\frac{1+|y-X^i|}\lambda\right)^{\tau-s},1\right\},\]
and $\tau\in ( k,\frac{n-2s}2)$ is a constant.

Consider the following equation
\begin{equation}\label{16}
(-\Delta)^s\phi-\frac{n+2s}{n-2s}K\left(\frac x \lambda\right)W_m^{\frac{4s}{n-2s}}\phi=g\;\;\text{in} \;\;\mathbb{R}^n.
\end{equation}

\begin{lemma}\label{lm6}
Let $\phi$ be a solution of the equation \eqref{16}, then we have the following estimate
\begin{eqnarray}\label{64}
\nonumber&&\left(\gamma(y)\sum_{h=1}^{(m+1)^k}\frac1{(1+|y-X^h|)^{\frac{n-2s}2+\tau}}\right)^{-1}|\phi(y)| \\
&\leq& C\|g\|_{**}+C\|\phi\|_*\left(\frac1{(\lambda l)^{\frac{4s}{n-2s}k}}\frac{\sum_{h=1}^{(m+1)^k}\frac1{(1+|y-X^h|)^{\frac{n-2s}2+\tau+\theta}}}{ \sum_{h=1}^{(m+1)^k}\frac1{(1+|y-X^h|)^{ \frac{n-2s}2+\tau}}}\right),
\end{eqnarray}
where $\theta>0$ is a constant and $C$ is independent of $m$.
\end{lemma}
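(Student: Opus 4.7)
The plan is to recast \eqref{16} as an integral equation by inverting $(-\Delta)^s$, and then bound the two resulting pieces pointwise using the weighted norms $\|\cdot\|_*$ and $\|\cdot\|_{**}$ together with the convolution estimates recorded in Appendix \ref{sect4}. Since $(-\Delta)^{-s}$ is convolution with $c_{n,s}|\cdot|^{2s-n}$, \eqref{16} yields
\begin{equation*}
\phi(y)=c_{n,s}\int_{\mathbb{R}^n}\frac{1}{|y-z|^{n-2s}}\left(\tfrac{n+2s}{n-2s}K(z/\lambda)W_m^{4s/(n-2s)}(z)\phi(z)+g(z)\right)dz,
\end{equation*}
and the factor $K(z/\lambda)$ is harmless by $(H_1)$. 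For the source contribution, insert $|g(z)|\le\|g\|_{**}\gamma(z)\sum_h(1+|z-X^h|)^{-(n+2s)/2-\tau}$ and invoke a convolution estimate from Appendix \ref{sect4} bounding $\int|y-z|^{2s-n}\gamma(z)(1+|z-X^h|)^{-(n+2s)/2-\tau}dz$ by $C\gamma(y)(1+|y-X^h|)^{-(n-2s)/2-\tau}$. Summing over $h$ produces the first term $C\|g\|_{**}$ in \eqref{64}.

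For the nonlinear piece, write $W_m=\sum_hU_{P^h,\Lambda_h}$; since $\Lambda_h\in[C_1,C_2]$ and $|P^h-X^h|<\tfrac12$, we have $U_{P^h,\Lambda_h}(z)\le C(1+|z-X^h|)^{-(n-2s)}$, hence the elementary inequality $(a+b)^p\le C(a^p+b^p)$ gives $W_m^{4s/(n-2s)}(z)\le C\sum_h(1+|z-X^h|)^{-4s}$. Combined with the pointwise bound $|\phi(z)|\le\|\phi\|_*\gamma(z)\sum_{h'}(1+|z-X^{h'}|)^{-(n-2s)/2-\tau}$, the perturbative contribution is controlled by a double sum
\begin{equation*}
C\|\phi\|_*\sum_{h,h'}\int\frac{\gamma(z)\,dz}{|y-z|^{n-2s}(1+|z-X^h|)^{4s}(1+|z-X^{h'}|)^{(n-2s)/2+\tau}},
\end{equation*}
which I would split into a diagonal part ($h=h'$) and an off-diagonal part ($h\ne h'$). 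The diagonal, via a convolution lemma in Appendix \ref{sect4}, yields $C\gamma(y)\sum_h(1+|y-X^h|)^{-(n-2s)/2-\tau-\theta}$ for some $\theta>0$, the gain $\theta$ reflecting the extra factor $(1+|z-X^h|)^{-4s}$ concentrated at the same center. The off-diagonal part uses $|X^h-X^{h'}|\ge\lambda l$: partitioning $\mathbb{R}^n$ into regions according to which $X^h$ is closest to $z$ and applying convolution estimates near each center lets one extract negative powers of $\lambda l$; summing over $h\ne h'$ on the $k$-dimensional lattice $X_{l,m}$, whose tails converge since $\tau>k$, produces the stated prefactor $(\lambda l)^{-4sk/(n-2s)}$ together with the improved kernel $(1+|y-X^h|)^{-(n-2s)/2-\tau-\theta}$.

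The main obstacle is this off-diagonal bookkeeping: one must juggle the weight $\gamma$ (which rescales depending on whether $y$ is within distance $\lambda$ of some center), the Green's kernel $|y-z|^{2s-n}$, and the dual decays $(1+|z-X^h|)^{-4s}$ and $(1+|z-X^{h'}|)^{-(n-2s)/2-\tau}$ so as to produce simultaneously the small prefactor $(\lambda l)^{-4sk/(n-2s)}$ and the improved exponent $\tau+\theta$. The restriction $\tau\in(k,(n-2s)/2)$ built into the norms is precisely what guarantees both convergence of the $k$-dimensional lattice sums and the strict positivity of $\theta$; once the convolution inequalities of Appendix \ref{sect4} are established, the two bounds combine directly to give \eqref{64}.
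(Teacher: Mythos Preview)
Your overall framework—rewrite \eqref{16} as an integral equation and bound the two pieces separately—matches the paper. The gap lies in the $W_m^{4s/(n-2s)}\phi$ contribution. You invoke the ``elementary inequality $(a+b)^p\le C(a^p+b^p)$'' to obtain $W_m^{4s/(n-2s)}\le C\sum_h(1+|z-X^h|)^{-4s}$, but $W_m$ is a sum of $(m+1)^k$ terms: when $p=\tfrac{4s}{n-2s}>1$, which occurs for $2+2s<n<6s$ (a nonempty range once $s>\tfrac12$), the constant in such an inequality necessarily grows like $((m+1)^k)^{p-1}$, destroying the $m$-independence the lemma requires. Your subsequent diagonal/off-diagonal split is built on this termwise bound and so inherits the defect.

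The paper circumvents this by never expanding $W_m^{4s/(n-2s)}$ termwise. It appeals directly to Lemma~\ref{lm11}, whose proof partitions the \emph{domain of integration} into three annular shells about the nearest center and uses the lattice estimates of Lemma~\ref{lm9} to control $W_m$ as a whole on each shell: in the intermediate shell $\Omega_i\cap B_i^c\cap B_{i,m}$ one has $W_m\sim(1+|z-X^i|)^{-(n-2s-k)}(\lambda l)^{-k}$, and raising this to the power $\tfrac{4s}{n-2s}$ is precisely what produces the exponent $\tfrac{4s}{n-2s}k$ on $\lambda l$—a mechanism invisible in a diagonal/off-diagonal decomposition. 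Note also that the outcome of Lemma~\ref{lm11} (and hence the form of \eqref{64} actually used in Lemma~\ref{lm15}) is a \emph{sum} of the two small quantities, not a product: the near shell contributes the improved decay $\tau+\theta$ with no $(\lambda l)$ gain, while the intermediate and far shells contribute the $(\lambda l)^{-4sk/(n-2s)}$ factor with the unimproved decay $\tau$. Your assertion that the off-diagonal piece simultaneously yields both is therefore too optimistic. A secondary point: the convolution bound you cite for the $g$ term, with the weight $\gamma$ carried through from $\gamma(z)$ to $\gamma(y)$, is not a ready-made lemma in Appendix~\ref{sect4}; the paper establishes it inside the proof of Lemma~\ref{lm6} itself (the passage \eqref{36}--\eqref{34}), and that passage requires its own case analysis.
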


\begin{proof}
We rewrite the equation \eqref{16} into an integral equation
\begin{equation}\label{33}
\phi(y)=C_1(n,s)\int_{\mathbb{R}^n}\frac1{|y-z|^{n-2s}}\left(\frac{n+2s}{n-2s}K(\frac z \lambda)W_m^{\frac{4s}{n-2s}}(z)\phi(z)+g(z)\right)dz,
\end{equation}
where the constant $C_1(n,s)$ is defined in the Green function of $(-\Delta)^s$ on $\mathbb{R}^n$(cf. \cite{Caffarelli2007}).

From Lemma \ref{lm11}, we get
\begin{eqnarray}\label{65}
\nonumber&&\left|\int_{\mathbb{R}^n}\frac1{|y-z|^{n-2s}}K(\frac z \lambda)W_m^{\frac{4s}{n-2s}}(z)\phi(z)dz\right| \\ \nonumber
&\leq& C\|\phi\|_*\int_{\mathbb{R}^n}\frac1{|y-z|^{n-2s}}W_m^{\frac{4s}{n-2s}}(z)\gamma(z)\sum_{h=1}^{(m+1)^k}\frac1{( 1+|z-X^h|)^{\frac{n-2s}2+\tau}}dz \\ \nonumber
&\leq&C\|\phi\|_*\left(\gamma(y)\sum_{h=1}^{(m+1)^k}\frac1{( 1+|y-X^h|)^{\frac{n-2s}2+\tau+\theta}}\right. \\
&&\left.+\frac1{(\lambda l)^{\frac{4s}{n-2s}k}}\gamma(y)\sum_{h=1}^{(m+1)^k}\frac1{( 1+|y-X^h|)^{\frac{n-2s}2+\tau}}\right).
\end{eqnarray}

For the second term on the right-hand side of \eqref{33}, we have
\begin{equation}\label{36}
\int_{\mathbb{R}^n}\frac1{|y-z|^{n-2s}}|g(z)|dz\leq \|g\|_{**}\int_{\mathbb{R}^n}\frac1{|y-z|^{n-2s}}\gamma(z)\sum_{h=1}^{(m+1)^k}\frac1{(1+|z-X^h|)^{\frac{n+2s}2 +\tau}}dz.
\end{equation}
Using Lemma \ref{lm10}, we obtain
\begin{equation}\label{35}
\int_{\mathbb{R}^n}\frac1{|y-z|^{n-2s}}\sum_{h=1}^{(m+1)^k}\frac1{(1+|z-X^h|)^{\frac{n+2s}2 +\tau}}dz\leq C\sum_{h=1}^{(m+1)^k}\frac1{(1+|z-X^h|)^{\frac{n-2s}2+\tau}}.
\end{equation}

Define $B_i:=B_{\lambda l}(X^i)$, $B_{i,m}:=B_{r_0}(X^i)$ with $r_0=\max\{\frac m4,1\}$ and
\[\Omega_i:=\{z\in\mathbb{R}^n:\;\;|z-X^i|=\min_{j=1,\dots,(m+1)^k}|z-X^j|\}.\]

Without loss of generality, we assume $y\in\Omega_1$. Make use of Lemma \ref{lm9} under different cases, we have
\begin{equation}\label{37}
\frac1{\lambda^{\tau-s}}\sum_{h=1}^{(m+1)^k}\frac1{(1+|y-X^h|)^{\frac n2}}\leq C\left(\frac{1+ |y-X^1|}{\lambda}\right)^{\tau-s}\sum_{h=1}^{(m+1)^k}\frac1{(1+|y-X^h|)^{\frac{n-2s}2+\tau}}.
\end{equation}
Using Lemma \ref{lm10} and \eqref{37}, we have
\begin{eqnarray}\label{34}
\nonumber&&\int_{\mathbb{R}^n}\frac1{|y-z|^{n-2s}}\min_{i=1,\dots,(m+1)^k}\left\{\left(\frac{1+|z-X^i|}{\lambda} \right)^{\tau-s}\right\} \sum_{h=1}^{( m+1)^k}\frac1{(1+|z-X^h|)^{\frac{n+2s}2+\tau}}dz \\ \nonumber
&\leq& \frac C{\lambda^{\tau-s}}\int_{\mathbb{R}^n}\frac1{|y-z|^{n-2s}}\sum_{h=1}^{(m+1)^k} \frac1{(1+|z-X^h|)^{\frac n2+2s}}dz \\\nonumber
&\leq& \frac C{\lambda^{\tau-s}}\sum_{h=1}^{(m+1)^k}\frac1{(1+|y-X^h|)^{\frac n2}} \\
&\leq& C\left(\frac{1+ |y-X^1|}{\lambda}\right)^{\tau-s}\sum_{h=1}^{(m+1)^k}\frac1{(1+|y-X^h|)^{\frac{n-2s}2+\tau}}.
\end{eqnarray}
From the definition of $\gamma(y)$, \eqref{36}, \eqref{35} and \eqref{34}, we know
\begin{equation}\label{66}
\int_{\mathbb{R}^n}\frac1{|y-z|^{n-2s}}|g(z)|dz\leq C\|g\|_{**}\gamma(y)\sum_{h=1}^{(m+1)^k}\frac1{(1+|y-X^h|)^{\frac{n-2s}2+\tau}}.
\end{equation}
Now \eqref{64} follows from \eqref{33}, \eqref{65} and \eqref{66}.

\end{proof}

Consider the following problem
\begin{equation}\label{17}
\left\{\begin{array}{ll}(-\Delta)^s\phi-\frac{n+2s}{n-2s}K(\frac x\lambda)W_m^{\frac{4s}{n-2s}}\phi=g+\sum_{i=1}^{(m+1)^k} \sum_{j=1}^{n+1}c^{(m)}_{ij}U_{P^i,\Lambda_i}^{\frac{4s}{n-2s}}Z_{i,j}, \\
\int_{\mathbb{R}^n}U_{P^i,\Lambda_i}^{\frac{4s}{n-2s}}Z_{i,j}\phi dx=0,\;\;\;\; \phi\in \dot{H}^s(\mathbb{R}^n), \;\;\;\;i=1,\dots,(m+1)^k,\;\;j=1,\dots,n+1,\end{array}\right.
\end{equation}
where $Z_{i,j}=\frac{\partial U_{P^i,\Lambda_i}}{\partial P^i_j}$ for $j=1,\dots,n$ and $Z_{i,n+1}= \frac{\partial U_{P^i,\Lambda_i}}{\partial \Lambda_i}$.
\begin{lemma}\label{lm15}
Assume $\phi$ solves the problem \eqref{17}, there is exists $l_0>0$, such that for all $l>l_0$, we have $\|\phi\|_*\leq C\|g\|_{**}$, where $C$ is independent of $m$.
\end{lemma}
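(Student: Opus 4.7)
The plan is to convert the bound from Lemma \ref{lm6}, which still carries the right-hand side of the projected equation, into a clean bound in terms of $\|g\|_{**}$ alone, by controlling the Lagrange multipliers $c^{(m)}_{ij}$. I will apply Lemma \ref{lm6} with $g$ replaced by $g+\sum_{i,j} c^{(m)}_{ij}U_{P^i,\Lambda_i}^{\frac{4s}{n-2s}}Z_{i,j}$. Since $|U_{P^i,\Lambda_i}^{\frac{4s}{n-2s}}Z_{i,j}|$ decays like $(1+|y-P^i|)^{-(n+2s)}$ in the spatial directions (and analogously for the $\Lambda$-direction), a direct computation gives $\|U_{P^i,\Lambda_i}^{\frac{4s}{n-2s}}Z_{i,j}\|_{**}\le C$ uniformly in $i$ and $m$. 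Moreover, because $\tau>k$ and $1\le k<\frac{n-2s}{2}$, the ratio of weighted sums appearing on the right of \eqref{64} is bounded, while the prefactor $(\lambda l)^{-\frac{4sk}{n-2s}}$ tends to $0$ as $l\to\infty$. Consequently
\[
\|\phi\|_*\le C\|g\|_{**}+C\sum_{i,j}|c^{(m)}_{ij}|+o(1)\|\phi\|_*,
\]
where $o(1)$ is controlled by $l$ alone, independently of $m$.

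Next I estimate the multipliers by testing \eqref{17} against each $Z_{i',j'}$. Using the intertwining identity $(-\Delta)^s Z_{i',j'}=\frac{n+2s}{n-2s}U_{P^{i'},\Lambda_{i'}}^{\frac{4s}{n-2s}}Z_{i',j'}$ (consequence of the nondegeneracy statement from \cite{Davila2013}) together with the orthogonality $\int U_{P^{i'},\Lambda_{i'}}^{\frac{4s}{n-2s}}Z_{i',j'}\phi=0$, the left-hand side collapses to
\[
\frac{n+2s}{n-2s}\int_{\mathbb{R}^n}\Bigl[U_{P^{i'},\Lambda_{i'}}^{\frac{4s}{n-2s}}-K(x/\lambda)W_m^{\frac{4s}{n-2s}}\Bigr]Z_{i',j'}\phi\,dx,
\]
which, splitting $W_m^{4s/(n-2s)}$ between the bubble at $P^{i'}$ and the tails from the other bubbles and using $|K(x/\lambda)-K(P^{i'}/\lambda)|=O(\lambda^{-1})$ near $P^{i'}$, is of size $o(1)\|\phi\|_*$ uniformly in $i',j',m$. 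The right-hand side, by the decay of $Z_{i',j'}$, contributes at most $C\|g\|_{**}$ from the $g$-term and a nearly diagonal quadratic form $M_{(i,j),(i',j')}:=\int U_{P^i,\Lambda_i}^{\frac{4s}{n-2s}}Z_{i,j}Z_{i',j'}$. The off-diagonal entries ($i\ne i'$) are negligible because $|P^i-P^{i'}|\gtrsim\lambda l\to\infty$, while the diagonal block is the constant invertible matrix $\int U_{0,\Lambda_i}^{\frac{4s}{n-2s}}Z_{\cdot,j}Z_{\cdot,j'}$ coming from the standard bubble; this block is bounded below thanks to nondegeneracy. Solving the resulting diagonally dominant linear system gives
\[
|c^{(m)}_{ij}|\le C\|g\|_{**}+o(1)\|\phi\|_*,\qquad o(1)\to 0\text{ as }l\to\infty,
\]
uniformly in $i,j,m$.

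Substituting this back into the estimate of the first paragraph yields $\|\phi\|_*\le C\|g\|_{**}+o(1)\|\phi\|_*$, and choosing $l_0$ so that the $o(1)$ coefficient is, say, below $1/2$ for all $l>l_0$ lets me absorb the $\|\phi\|_*$ term into the left-hand side, producing the claimed bound. All constants are independent of $m$ because the estimates on each bubble are translation-invariant and the separation $|X^i-X^{i'}|\gtrsim\lambda l$ is uniform.

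I expect the main technical difficulty to sit in the multiplier estimate, more precisely in showing that $\int [U_{P^{i'},\Lambda_{i'}}^{\frac{4s}{n-2s}}-K(x/\lambda)W_m^{\frac{4s}{n-2s}}]Z_{i',j'}\phi$ is genuinely $o(1)\|\phi\|_*$ and that the coupling matrix $M$ is uniformly invertible in $m$. Controlling the overlap of the $(m+1)^k$ bubbles and the slow polynomial decay of $U_{P^i,\Lambda_i}$ requires the weighted interaction lemmas \ref{lm9}--\ref{lm11} of the appendix, in the same spirit as in \cite{WeiYan2010,LiYY2016}; the restriction $k<\frac{n-2s}{2}$ and the choice $\tau\in(k,\frac{n-2s}{2})$ are exactly what make the summed tails integrable and the small factor $(\lambda l)^{-\frac{4sk}{n-2s}}$ effective.
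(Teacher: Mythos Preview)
Your direct absorption argument rests on reading \eqref{64} as a \emph{product} of $(\lambda l)^{-\frac{4s}{n-2s}k}$ with the ratio of weighted sums. That displayed line contains a typo: from \eqref{65} (and from how the paper itself applies Lemma~\ref{lm6} in the proof of Lemma~\ref{lm15}) the two contributions are \emph{additive},
\[
\bigl(\gamma(y)\textstyle\sum_h(1+|y-X^h|)^{-(\frac{n-2s}{2}+\tau)}\bigr)^{-1}|\phi(y)|
\le C\|g\|_{**}+C\|\phi\|_*\Bigl(\frac{1}{(\lambda l)^{\frac{4s}{n-2s}k}}+\frac{\sum_h(1+|y-X^h|)^{-(\frac{n-2s}{2}+\tau+\theta)}}{\sum_h(1+|y-X^h|)^{-(\frac{n-2s}{2}+\tau)}}\Bigr).
\]
The first summand is $o(1)$, but the ratio is \emph{not} small uniformly in $y$: at $y=X^{i_0}$ it is essentially $1$. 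Taking the supremum over $y$ therefore gives $\|\phi\|_*\le C\|g\|_{**}+(C+o(1))\|\phi\|_*$, and nothing can be absorbed. This is the genuine gap; the rest of your outline (testing against $Z_{i',j'}$, diagonal dominance of the Gram matrix) is fine, though note you should control $\max_{i,j}|c^{(m)}_{ij}|$, not $\sum_{i,j}|c^{(m)}_{ij}|$, since the latter has $(m+1)^k(n+1)$ terms and would introduce $m$-dependence.

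The paper closes this gap by a contradiction/compactness step rather than a direct bound. Normalising $\|\phi\|_*=1$ and $\|g\|_{**}\to 0$, the additive inequality above says the weighted quantity on the left is $\le o(1)+C\cdot(\text{ratio}(y))$. Since the ratio is small whenever $y$ stays away from all the centres, the supremum defining $\|\phi\|_*$ must be realised in some fixed ball $B_R(X^{i_0})$. One then shows $\lambda^{\tau-s}\phi$ is uniformly bounded and (via Lemma~\ref{lm12}) equicontinuous, passes to a limit $\tilde\phi$ of $\lambda^{\tau-s}\phi(\cdot+P^{i_0})$ by Arzel\`a--Ascoli, and checks that $\tilde\phi$ solves the linearised single-bubble equation while remaining orthogonal to its kernel. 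Nondegeneracy forces $\tilde\phi\equiv 0$, contradicting the lower bound coming from $\|\phi\|_*=1$. This blow-up/limiting argument is the missing ingredient in your proposal.
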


\begin{proof}
If this lemma is not right, then there would be sequences $\{g_l\}_{l=1}^\infty$ and $\{\phi_l\}_{l=1}^\infty$ satisfying \eqref{17} with $\|\phi_l\|_*=1$ and $\|g_l\|_{**}\to 0$ as $l\to +\infty$. For notation simplicity, we suppress $l$ in the argument below.

First, we give an estimate of the parameters $c^{(m)}_{ij}$. Multiplying \eqref{17} with $Z_{r,t}$ and integrating on both sides, we get
\begin{equation}\label{18}
-\frac{n+2s}{n-2s}\int_{\mathbb{R}^n}K(\frac x\lambda)W_m^{\frac{4s}{n-2s}}\phi Z_{r,t}dx=\int_{\mathbb{R}^n}gZ_{r,t}dx+\sum_{i=1}^{(m+1)^k} \sum_{j=1}^{n+1}c^{(m)}_{ij}\int_{\mathbb{R}^n}U_{P^i,\Lambda_i}^{\frac{4s}{n-2s}}Z_{i,j}Z_{r,t}dx.
\end{equation}
For the first term on the right hand side of \eqref{18}, using Lemma \ref{lm10}, we have
\begin{eqnarray*}
\left|\int_{\mathbb{R}^n}g Z_{r,t}dx\right|&\leq& C\|g\|_{**}\int_{\mathbb{R}^n}\frac1{(1+|x-X^r|)^{n-2s}}\gamma(x)\sum_{j=1}^{ (m+1)^k}\frac1{(1+|x-X^j|)^{\frac{n+2s}2+\tau}}dx \\
&\leq& \frac C{\lambda^{\tau-s}}\|g\|_{**}\int_{\mathbb{R}^n}\frac1{(1+|x-X^r|)^{n-2s}}\sum_{j=1}^{ (m+1)^k}\frac1{(1+|x-X^j|)^{\frac n2+2s}}dx \\
&\leq& C\frac{\|g\|_{**}}{\lambda^{\tau-s}}\left(\int_{\mathbb{R}^n}\frac1{(1+|x-X^r|)^{n+\frac{n}2}}dx+ \sum_{j\not=r}\frac1{|X^j-X^r|^{ \frac{n}2}}\right) \\
&\leq& C\frac{\|g\|_{**}}{\lambda^{\tau-s}},
\end{eqnarray*}
where we have used the fact that
\begin{equation}\label{67}
\sum_{j\not=r}\frac1{|X^j-X^r|^{\frac n2}}\text{ converges for }\frac n2>k.
\end{equation}
Since the left hand side of the equation \eqref{18} is estimated in Lemma \ref{lm5}, we have
\begin{equation*}
\sum_{i=1}^{(m+1)^k} \sum_{j=1}^{n+1}c^{(m)}_{ij}\int_{\mathbb{R}^n}U_{P^i,\Lambda_i}^{\frac{4s}{n-2s}}Z_{i,j}Z_{r,t}dx= \frac1{\lambda^{\tau-s}}O\left( \|g\|_{**}+\frac{\|\phi\|_*}{(\lambda l)^{\frac n2}}\right).
\end{equation*}
As we know $\int_{\mathbb{R}^n}U_{P^i,\Lambda_i}^{\frac{4s}{n-2s}}Z_{i,j}Z_{i,t}dx=C\delta_{jt}$ and $\int_{\mathbb{R}^n}|U_{P^i,\Lambda_i}^{\frac{4s}{n-2s}}Z_{i,j}Z_{r,t}|dx\leq \frac{C}{|X^i-X^r|^{n-2s}}$ for $i\not=r$, we obtain
\[\max_{i,j}\{|c^{(m)}_{ij}|\}=\frac1{\lambda^{\tau-s}}O\left(\|g\|_{**}+\frac{\|\phi\|_*}{(\lambda l)^{\frac n2}}\right).\]

An argument similar to the one used in \eqref{37} yields
\begin{eqnarray*}
\left|\sum_{i=1}^{(m+1)^k} \sum_{j=1}^{n+1}c^{(m)}_{ij}U_{P^i,\Lambda_i}^{\frac{4s}{n-2s}}Z_{i,j}\right|&\leq& \frac C{\lambda^{\tau-s}}\left(\|g\|_{**}+\frac{\|\phi\|_*}{(\lambda l)^{\frac n2}}\right) \sum_{i=1}^{(m+1)^k} \frac1{(1+|y-X^i|)^{n+2s}} \\
&\leq& C\left(\|g\|_{**}+\frac{\|\phi\|_*}{(\lambda l)^{\frac n2}}\right)\gamma(y)\sum_{i=1}^{(m+1)^k} \frac1{(1+|y-X^i|)^{\frac{n+2s}2+\tau}}.
\end{eqnarray*}
From the definition of the norm $\|\cdot\|_{**}$, we have
\[\|\sum_{i=1}^{(m+1)^k} \sum_{j=1}^{n+1}c^{(m)}_{ij}U_{P^i,\Lambda_i}^{\frac{4s}{n-2s}}Z_{i,j}\|_{**}\leq C\left(\|g\|_{**}+\frac{\|\phi\|_*}{(\lambda l)^{\frac n2}}\right).\]

Applying Lemma \ref{lm6} to the first equation of the system \eqref{17}, one get
\begin{eqnarray*}
&&\left(\gamma(y)\sum_h\frac1{(1+|y-X^h|)^{\frac{n-2s}2+\tau}}\right)^{-1}|\phi(y)| \\
&\leq& C\left(\|g\|_{**}+\|\sum_{i=1}^{(m+1)^k} \sum_{j=1}^{n+1}c^{(m)}_{ij}U_{P^i,\Lambda_i}^{\frac{4s}{n-2s}}Z_{i,j}\|_{**}+\left(\frac1{(\lambda l)^{\frac{4s}{n-2s}k}}+\frac{\sum_h\frac1{(1+|y-X^h|)^{\frac{n-2s}2+\tau+\theta}}}{\sum_h\frac1{(1+|y-X^h|)^{ \frac{n-2s}2+\tau}}}\right)\|\phi\|_*\right) \\
&\leq& C\left(\|g\|_{**}+\frac1{(\lambda l)^{\frac{4s}{n-2s}k}}+\frac{\sum_h\frac1{(1+|y-X^h|)^{\frac{n-2s}2+\tau+\theta}}}{\sum_h\frac1{(1+|y-X^h|)^{ \frac{n-2s}2+\tau}}}\right).
\end{eqnarray*}
As a result, there exist a number $i_0\in \mathbb{N}$ and a large constant $R>0$, such that
\begin{equation}\label{21}
1=\|\phi\|_*=\sup_{B_R(X^{i_0})}(\gamma(y)\sum_{h=1}^{(m+1)^k}\frac1{(1+|y-X^h|)^{\frac{n-2s}2+\tau}} )^{-1}|\phi(y)|.
\end{equation}
Hence there is a constant $c_0>0$ such that $|\lambda^{\tau-s}\phi|_{L^{\infty}(B_R(X^{i_0}))}\geq c_0$.

Applying Lemma \ref{lm12} to the equation \eqref{17}, we know $\lambda^{\tau-s}\phi$ is equi-continuous. Also $\lambda^{\tau-s}|\phi(\cdot)|$ is uniformly bounded. In fact, we assume that $y\in \Omega_1$ with no loss of generality. From the fact \eqref{67}, we have
\[
\lambda^{\tau-s}|\phi(y)|\leq \|\phi\|_*\sum_h\frac1{(1+|y-X^h|)^{\frac n2}}\leq C+\sum_{h\not=1}\frac1{|X^h-X^1|^{\frac n2}}\leq C.
\]
Then the Arzel\`{a}-Ascoli Theorem yields that there is a function $\tilde\phi$, such that $\lambda^{\tau-s}\phi(\cdot+P^{i_0})$ convergent to $\tilde \phi$ uniformly on compact sets. Then
\begin{equation}\label{68}
|\tilde\phi|_{L^{\infty}(B_{R+1}(0))}\geq c_0.
\end{equation}
Using a similar argument as in \cite[Lemma 7.3]{Davila2015}, we know $\tilde\phi$ satisfies
\begin{equation*}\
\left\{\begin{array}{ll}
(-\Delta)^s\tilde \phi-\frac{n+2s}{n-2s}U_{0,\Lambda_{i_0}}^{\frac{4s}{n-2s}}\tilde \phi=0,\\
 \int_{\mathbb{R}^n} U_{0,\Lambda_{i_0}}^{\frac{4s}{n-2s}}\frac{\partial U_{0,\Lambda_{i_0}}}{\partial \Lambda_{i_0}}\tilde \phi=0,\hspace{3,mm} \int_{\mathbb{R}^n} U_{0,\Lambda_{i_0}}^{\frac{4s}{n-2s}}\frac{\partial U_{0,\Lambda_{i_0}}}{\partial P^{i_0}_j}\tilde \phi=0,\;\;j=1,\dots,n .\end{array}\right.
\end{equation*}
Then $\tilde\phi=0$ by nondegeneracy, which is contradict to \eqref{68}. Hence the solution $\phi$ of the equation \eqref{17} satisfies $\|\phi\|_*\leq C \|g\|_{**}$.

\end{proof}

Combining Lemma \ref{lm15}, Lemma \ref{lm12} and the argument of \cite[Proposition 4.1]{del_Pino2003}( cf. \cite[Proposition 2.2]{LiuCG2016}), we have
\begin{proposition}\label{prop4}
For any $g$ satisfying $\|g\|_{**}<+\infty$, \eqref{17} has a unique solution $\phi=L_m(g)\in \dot{H}^s(\mathbb{R}^n)\cap C^{0,\alpha}(\mathbb{R}^n)$ with $\alpha=\min\{2s,1\}$, such that $\|L_m(g)\|_*\leq C\|g\|_{**}$. The constant $c^{(m)}_{ij}$ satisfies $|c^{(m)}_{ij}|\leq \frac C{\lambda^{\tau-s}}\|g\|_{**}$.
\end{proposition}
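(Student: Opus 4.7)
The plan is a standard Lyapunov--Schmidt/Fredholm argument in which Lemma \ref{lm15} supplies the injectivity that the Fredholm alternative requires. I would introduce the closed subspace
\[
E_m := \Bigl\{ \phi \in \dot{H}^s(\mathbb{R}^n) : \int_{\mathbb{R}^n} U_{P^i,\Lambda_i}^{\frac{4s}{n-2s}} Z_{i,j}\,\phi\,dx = 0,\ 1\le i\le (m+1)^k,\ 1\le j\le n+1 \Bigr\},
\]
and project \eqref{17} onto $E_m$, which kills the Lagrange multiplier terms on the right-hand side. The principal part $(-\Delta)^s \phi$ is, by Riesz representation, exactly the $\dot{H}^s$-pairing with $\phi$. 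The linear term $\tfrac{n+2s}{n-2s}K(x/\lambda)W_m^{\frac{4s}{n-2s}}\phi$, paired against $\psi\in E_m$, defines a bounded linear functional on $E_m$ by the Sobolev inequality \eqref{3} together with H\"older (using $K\in L^\infty$ by $(H_1)$ and $W_m^{\frac{4s}{n-2s}}\in L^{n/(2s)}$), and therefore represents a vector $T_m\phi\in E_m$; similarly $g$ represents $\tilde g_m\in E_m$ with $\|\tilde g_m\|_{\dot H^s}\le C\|g\|_{**}$. The problem \eqref{17} is equivalent to the operator equation $(I-T_m)\phi=\tilde g_m$ on $E_m$.

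To apply the Fredholm alternative I would next verify that $T_m$ is compact on $E_m$: since the multiplier $K(x/\lambda)W_m^{\frac{4s}{n-2s}}$ lies in $L^{n/(2s)}(\mathbb{R}^n)$, multiplication by it is a compact operator from $\dot{H}^s$ into its dual, by the standard Rellich-type argument for subcritical perturbations of the critical Sobolev embedding (exactly as in \cite[Proposition 4.1]{del_Pino2003} and \cite[Proposition 2.2]{LiuCG2016}). With $I-T_m$ a compact perturbation of the identity, existence and uniqueness reduce to injectivity. For any $\phi\in\ker(I-T_m)$, the element $\phi$ solves \eqref{17} weakly with $g=0$ for some multipliers $c^{(m)}_{ij}$; a bootstrap on the Green's-function representation \eqref{33} as used in the proof of Lemma \ref{lm6}, combined with the regularity statement of Lemma \ref{lm12}, upgrades $\phi\in\dot{H}^s$ to a continuous function with $\|\phi\|_*<\infty$. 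Lemma \ref{lm15} then forces $\phi\equiv 0$, giving injectivity and hence an isomorphism $I-T_m:E_m\to E_m$.

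The estimate $\|L_m(g)\|_*\le C\|g\|_{**}$ is now exactly Lemma \ref{lm15}, and the H\"older regularity $\phi\in C^{0,\alpha}(\mathbb{R}^n)$ with $\alpha=\min\{2s,1\}$ comes from the same bootstrap, since $\|\phi\|_*<\infty$ renders the right-hand side of \eqref{33} bounded, after which Lemma \ref{lm12} gives $C^{0,\alpha}$-regularity. For the Lagrange multiplier bound I would test \eqref{17} against $Z_{r,t}$: the computation is essentially the one carried out in the opening paragraphs of the proof of Lemma \ref{lm15}, showing $|\int g\,Z_{r,t}|\le C\lambda^{-(\tau-s)}\|g\|_{**}$ and controlling the remaining left-hand side by Lemma \ref{lm5}. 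The coefficient matrix $\bigl(\int U_{P^i,\Lambda_i}^{\frac{4s}{n-2s}}Z_{i,j}Z_{r,t}\bigr)$ has positive constants on the diagonal and off-diagonal entries of size $O(|X^i-X^r|^{-(n-2s)})$, so by \eqref{67} it is diagonally dominant with $m$-independent inverse; solving this linear system and absorbing $\|\phi\|_*\le C\|g\|_{**}$ yields $|c^{(m)}_{ij}|\le C\lambda^{-(\tau-s)}\|g\|_{**}$.

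The main obstacle I expect is obtaining compactness of $T_m$ \emph{uniformly} in $m$: since $\mathbb{R}^n$ is noncompact and the number of bubbles in $W_m$ grows with $m$, one cannot simply invoke Rellich globally. The remedy, as in the cited references, is to localize the compactness argument to neighborhoods of each concentration point $X^i$ (where the standard Rellich embedding applies) and to estimate the tail separately using the $L^{n/(2s)}$-summability of the bubbles, which follows from \eqref{67}; this keeps all constants independent of $m$, as required by the statement.
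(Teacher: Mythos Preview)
Your proposal is correct and follows exactly the route the paper takes: the paper's own proof is a one-line reference to Lemma~\ref{lm15}, Lemma~\ref{lm12}, and the Fredholm argument of \cite[Proposition~4.1]{del_Pino2003} (cf.~\cite[Proposition~2.2]{LiuCG2016}), and you have simply written out that argument in detail. One small clarification: your worry in the last paragraph about compactness of $T_m$ \emph{uniformly in $m$} is unnecessary---the Fredholm step only needs to produce a solution for each fixed $m$, while the $m$-independent constant in $\|L_m(g)\|_*\le C\|g\|_{**}$ (and in the multiplier bound) is supplied directly by the a~priori estimate of Lemma~\ref{lm15}, not by the operator norm of $(I-T_m)^{-1}$ on $\dot H^s$.
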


Since we are interested in the solution of the form $W_m+\phi_m$ of the equation \eqref{2},  we now consider the following problem
\begin{equation}\label{22}
\left\{\begin{array}{ll}(-\Delta)^s\phi-\frac{n+2s}{n-2s}K(\frac x\lambda)W_m^{\frac{4s}{n-2s}}\phi=N(\phi)+l_m+\sum_{i=1}^{(m+1)^k} \sum_{j=1}^{n+1}c^{(m)}_{ij}U_{P^i,\Lambda_i}^{\frac{4s}{n-2s}}Z_{i,j}, \\
\int_{\mathbb{R}^n}U_{P^i,\Lambda_i}^{\frac{4s}{n-2s}}Z_{i,j}\phi dx=0,\;\;\;\; \phi\in \dot{H}^s(\mathbb{R}^n),\;\;\;\;i=1,\dots,(m+1)^k,\;\;j=1,\dots,n+1,\end{array}\right.
\end{equation}
where
\begin{equation*}
N(\phi)=K(\frac x\lambda)\left((W_m+\phi)^{\frac{n+2s}{n-2s}}_+-W_m^{\frac{n+2s}{n-2s}}-\frac{n+2s}{n-2s} W_m^{\frac{4s}{n-2s}}\phi\right)
\end{equation*}
and
\begin{equation*}
l_m=K(\frac x\lambda)W_m^{\frac{n+2s}{n-2s}}-\sum_{i=1}^{(m+1)^k}U_{P^i,\Lambda_i}^{ \frac{n+2s}{n-2s}}.
\end{equation*}
\begin{lemma}\label{lm7}
For the terms $N(\phi)$ and $l_m$ defined above, we have the following estimates
\[\|N(\phi)\|_{**}\leq C\|\phi\|_*^{\min\{2,2^*(s)-1\}},\]
\[\|l_m\|_{**}\leq \frac C{\lambda^{\frac{n+2s}2-\tau}}.\]
\end{lemma}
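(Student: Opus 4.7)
The plan is to derive pointwise bounds for $N(\phi)$ and $l_m$ separately, and then repackage them against the weight defining $\|\cdot\|_{**}$, using the Appendix lemmas already employed in this section. Set $p = \frac{n+2s}{n-2s} = 2^*(s) - 1$.

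For $N(\phi)$, I first establish the Taylor-remainder inequality
\[
\bigl|(a+b)_+^p - a^p - p a^{p-1} b\bigr| \leq C(a^{p-2}b^2 + |b|^p) \text{ when } p \geq 2, \qquad \bigl|(a+b)_+^p - a^p - p a^{p-1} b\bigr| \leq C |b|^p \text{ when } 1 < p < 2,
\]
valid for $a \geq 0$, $b \in \R$, by splitting into the regimes $|b| \leq a$ and $|b| > a$. Applied with $a = W_m$, $b = \phi$, together with $\sup K < \infty$ from $(H_1)$, this gives $|N(\phi)(y)| \leq C\bigl(W_m(y)^{p-2}\phi(y)^2 + |\phi(y)|^p\bigr)$, with the first term dropped when $p<2$. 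Substituting the defining bound $|\phi(y)| \leq \|\phi\|_*\,\gamma(y)\sum_h (1+|y-X^h|)^{-(n-2s)/2-\tau}$, the remaining algebraic task is the weight inequality
\[
\Bigl(\gamma(y)\sum_h (1+|y-X^h|)^{-\frac{n-2s}{2}-\tau}\Bigr)^{\min\{2,p\}} \leq C\,\gamma(y)\sum_h (1+|y-X^h|)^{-\frac{n+2s}{2}-\tau},
\]
which I would verify by localizing to $y \in \Omega_{i_0}$ where a single term dominates, using $\gamma \leq 1$ raised to a nonnegative power, and the convergence of the lattice tail $\sum_{h\neq i_0} |X^h - X^{i_0}|^{-\alpha}$ for $\alpha > k$ (consistent with $\tau > k$). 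The mixed term $W_m^{p-2}\phi^2$ in the case $p \geq 2$ is similar but easier because of the extra fast-decaying $W_m$-factor.

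For $l_m$, I decompose
\[
l_m = \bigl(K(x/\lambda) - 1\bigr)\,W_m^p + \Bigl(W_m^p - \sum_i U_{P^i,\Lambda_i}^p\Bigr).
\]
For the first piece, since $X^i/\lambda = l\,x^i \in \Z^k \times \{0\}$, the $1$-periodicity in $(H_2)$ together with $K(0)=1$ gives $K(X^i/\lambda) = 1$; then by $(H_3)$ and periodicity, $|K(x/\lambda) - 1| \leq C|x-X^i|^\beta/\lambda^\beta$ on the fundamental cell around $X^i$, while $|K-1|$ is bounded globally by $(H_1)$. Combined with the decay of $W_m^p$ and the hypothesis $\beta > n-2s$ used to balance $\lambda$-powers, this piece contributes the advertised $\lambda^{-((n+2s)/2-\tau)}$ after pairing with the $\|\cdot\|_{**}$-weight. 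For the second piece, on each $\Omega_i$ I write $W_m = U_{P^i,\Lambda_i} + \sum_{j\neq i} U_{P^j,\Lambda_j}$ and apply the standard $(a+b)^p - a^p \leq C(a^{p-1}b + b^p)$ to obtain
\[
\Bigl|W_m^p - \sum_j U_{P^j,\Lambda_j}^p\Bigr| \leq C\,U_{P^i,\Lambda_i}^{p-1}\sum_{j\neq i} U_{P^j,\Lambda_j} + C\sum_{j\neq i} U_{P^j,\Lambda_j}^p,
\]
each of which reduces to a lattice sum of products of $U$'s whose $\|\cdot\|_{**}$-control follows by the same kind of manipulations used in the proof of Lemma \ref{lm6}, with $k < (n-2s)/2$ ensuring uniform-in-$m$ convergence of the lattice tails.

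The main obstacle I expect is the bookkeeping in the $l_m$ estimate: tracking the interplay between the $\lambda^{-\beta}$ gain from $(H_3)$, the $\gamma(y)$-factor (scaling like $\lambda^{-(\tau-s)}$), the $(1+|y-X^i|)^{-(n+2s)/2-\tau}$ profile in $\|\cdot\|_{**}$, and the lattice spacing $\lambda l$, so as to land precisely on the exponent $(n+2s)/2-\tau$ uniformly in $m$. By contrast, the $N(\phi)$ part is essentially the verification of the algebraic weight inequality above, routine but slightly delicate when $p$ is close to $1$.
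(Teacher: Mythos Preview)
Your proposal is correct and follows essentially the same approach as the paper. The only cosmetic difference is in the decomposition of $l_m$: the paper localizes to $\Omega_i$ first and writes $l_m=(K(x/\lambda)-1)U_{P^i,\Lambda_i}^{p}+O\bigl((\sum_{h\neq i}U_{P^h,\Lambda_h})^{p}+U_{P^i,\Lambda_i}^{p-1}\sum_{h\neq i}U_{P^h,\Lambda_h}\bigr)$, whereas you keep $(K(x/\lambda)-1)W_m^{p}$ as a single piece; since $|K-1|\le C$ the extra cross term $(K-1)(W_m^{p}-U_{P^i}^{p})$ is absorbed into the same interaction errors, and both routes then use $(H_3)$ near each $X^i$, the global bound from $(H_1)$ elsewhere, and the region-by-region application of Lemma~\ref{lm9} to control the lattice tails uniformly in $m$.
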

\begin{proof}
The proof of the first estimation is rather standard(cf. \cite[Lemma 2.4]{WeiYan2010} for ideas). We only prove the second estimate.

Without loss of generality, we assume $x\in \Omega_1$. Then
\begin{eqnarray}\label{38}
\nonumber l_m &=& K(\frac x\lambda)W_m^{\frac{n+2s}{n-2s}}-U_{P^1,\Lambda_1}^{\frac{n+2s}{n-2s}}-\sum_{h\not=1}U_{ P^h,\Lambda_h}^{ \frac{n+2s}{n-2s}} \\
&=& \left(K(\frac x\lambda)-1\right)U_{P^1,\Lambda_1}^{\frac{n+2s}{n-2s}}+O\left(\left(\sum_{h\not=1}U_{P^h,\Lambda_h}\right)^{ \frac{n+2s}{n-2s}}+U_{P^1,\Lambda_1}^{\frac{4s}{n-2s}}\sum_{h\not=1}U_{P^h,\Lambda_h}\right).
\end{eqnarray}

The two error terms in \eqref{38} can be estimated by using Lemma \ref{lm9} under different cases.

\noindent\textbf{Case 1:} $x\in \Omega_1\cap B_1^c\cap B_{1,m}$, we have $\gamma(x)=1$. Using Lemma \ref{lm9}, we have
\begin{eqnarray*}
\left(\sum_{h\not=1}U_{P^h,\Lambda_h}\right)^{\frac{n+2s}{n-2s}}&\leq& \frac C{(\lambda l)^{\frac{n+2s}{n-2s}k}} \frac1{(1+|x-X^1|)^{n+2s-\frac{n+2s}{n-2s}k}}  \\
&\leq& \frac C{(\lambda l)^{\frac{n+2s}2-\tau+k}}\frac1{(1+|x-X^1|)^{\frac{n+2s}2+\tau-k}} \\
&\leq& \frac C{(\lambda l)^{\frac{n+2s}2-\tau}}\sum_h\frac1{(1+|x-X^h|)^{\frac{n+2s}2+\tau}}.
\end{eqnarray*}

\noindent\textbf{Case 2:} $x\in \Omega_1\cap B_1$, it holds that $|x-X^i|\geq \frac12|X^i-X^1|\geq \frac12\lambda l$ for $i\not=1$. From Lemma \ref{lm9},
\begin{equation*}
\left(\sum_{h\not=1}U_{P^h,\Lambda_h}\right)^{\frac{n+2s}{n-2s}}\leq \left\{\begin{array}{ll}
\frac C{(\lambda l)^{\frac{n+2s}2-\tau}}\displaystyle\sum_h\frac1{(1+|x-X^h|)^{\frac{n+2s}2+\tau}}, &\mbox{if $x\in \Omega_1\cap B_1\cap B_{\lambda}^c(X_1)$,} \\
C\left(\frac{1+|x-X^1|}{\lambda}\right)^{\tau-s}\frac{\lambda^{\tau-s}}{(\lambda l)^{\frac n2}}\displaystyle\sum_h\frac1{(1+|x-X^h|)^{\frac{n+2s}2+\tau}}, &\mbox{if $x\in \Omega_1\cap B_1\cap B_{\lambda}(X^1)$.}
\end{array}\right.
\end{equation*}

\noindent\textbf{Case 3:} $x\in \Omega_1\cap B_{1,m}^c$, we can get
\begin{eqnarray*}
\left(\sum_{h\not=1}U_{P^h,\Lambda_h}\right)^{\frac{n+2s}{n-2s}}\leq \frac{Cm^{\frac{n+2s}{n-2s}k}}{(1+|x-X^1|)^{n+2s}}\leq \frac{Cm^{\frac{4s}{n-2s}k}}{(1+|x-X^1|)^{2s}}\frac{(1+C[\frac m2])^k}{(1+|x-X^1|)^n}.
\end{eqnarray*}
Following the proof of Lemma \ref{lm9}, we have
\begin{eqnarray*}
\sum_h\frac1{(1+|x-X^h|)^n}&\geq& \frac C{(1+|x-X^1|)^n}\left(1+2^{-k}\int_{[0,[\frac m2]+1]^k\char92 [0,1]^k}\frac1{(1+\frac{\lambda l}{1+|x-X^1|}|z|)^n}dz\right) \\
&\geq& \frac{(1+C[\frac m2])^k}{(1+|x-X^1|)^n}.
\end{eqnarray*}
Since in the domain $\Omega_1\cap B_{1,m}^c$, we can get $|x-X^h|\geq \frac12 \lambda l$ for $h=1,\dots,(m+1)^k$, then
\begin{eqnarray*}
\left(\sum_{h\not=1}U_{P^h,\Lambda_h}\right)^{\frac{n+2s}{n-2s}}\leq \frac C{(\lambda l)^{2s}}\sum_h\frac1{(1+|x-X^h|)^n}\leq \frac C{(\lambda l)^{\frac{n+2s}2-\tau}}\sum_h\frac1{(1+|x-X^h| )^{\frac{n+2s}2+\tau}}.
\end{eqnarray*}

Combining these three cases above, we have  $\|(\sum_{h\not=1}U_{P^h,\Lambda_h})^{\frac{n+2s}{n-2s}}\|_{**}\leq \frac C{(\lambda l)^{\frac{n+2s}2-\tau}}$. By the same procedure, we can also get the estimation $\|U_{P^1,\Lambda_1}^{\frac{4s}{n-2s}}\sum_{h\not=1} U_{P^h,\Lambda_h} \|_{**}\leq \frac C{(\lambda l)^{\frac{n+2s}2-\tau}}.$

At last, we estimate the first term in \eqref{38}.

In the case of $|x-X^1|\geq \lambda$, we have $\gamma(x)=1$. Then
\begin{equation}\label{39}
\left|K(\frac x\lambda)-1\right|U_{P^1,\Lambda_1}^{\frac{n+2s}{n-2s}}\leq \frac C{(1+|x-X^1|)^{n+2s}}\leq \frac C{\lambda^{\frac{n+2s}2-\tau}}\gamma(x)\sum_{h=1}^{(m+1)^k}\frac1{(1+|x-X^h|)^{\frac{n+2s}2+\tau}}.
\end{equation}
In the case of $|x-X^1|<\lambda$, it holds $\frac{1+|x-X^1|}{\lambda}\leq C$. The condition $(H_3)$ yields
\begin{eqnarray}\label{40}
\nonumber&&\left|K(\frac x\lambda)-1\right|U_{P^1,\Lambda_1}^{\frac{n+2s}{n-2s}}\leq C\frac{|x-X^1|^\beta}{\lambda^\beta}\frac1{(1+|x-X^1|)^{n+2s}} \\ \nonumber
&\leq&\frac C{\lambda^{\frac{n+2s}2-\tau}} \left(\frac{1+|x-X^1|}{\lambda}\right)^{\tau-s}\sum_{h=1}^{(m+1)^k}\frac1{(1+|x-X^h|)^{\frac{n+2s}2+\tau}} \\
&\leq& \frac C{\lambda^{\frac{n+2s}2-\tau}} \gamma(y)\sum_{h=1}^{(m+1)^k}\frac1{(1+|x-X^h|)^{\frac{n+2s}2+\tau}}.
\end{eqnarray}
Summarizing \eqref{39} and \eqref{40}, we have
\begin{equation*}
\|\left(K(\frac x\lambda)-1\right)U_{P^1,\Lambda_1}^{\frac{n+2s}{n-2s}}\|_{**}\leq \frac{C}{\lambda^{\frac{n+2s}2-\tau}}.
\end{equation*}
Hence this lemma follows.

\end{proof}

\begin{proposition}\label{prop1}
For $\lambda$ large enough, the problem \eqref{22} has a unique solution $\phi_m\in \dot{H}^s(\mathbb{R}^n)\cap C^{0,\alpha}(\mathbb{R}^n)$ with $\alpha=\min\{2s,1\}$, such that $\|\phi_m\|_*\leq \frac C{\lambda^{\frac{n+2s}2-\tau}}$. The constants $c_{ij}^{(m)}$ satisfy $|c^{(m)}_{ij}|\leq  C\lambda^{-\frac n2}$.
\end{proposition}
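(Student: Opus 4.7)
The plan is to recast \eqref{22} as a fixed-point problem and invoke Banach's contraction principle. Let $L_m$ denote the solution operator of the linear problem \eqref{17} furnished by Proposition \ref{prop4}; it takes any $g$ with $\|g\|_{**}<\infty$ to a $\phi=L_m(g)$ satisfying the orthogonality conditions, with $\|L_m(g)\|_*\leq C\|g\|_{**}$. Then \eqref{22} is equivalent to the fixed-point equation
\begin{equation*}
\phi = T(\phi) := L_m\bigl(N(\phi)+l_m\bigr)
\end{equation*}
on the closed subspace $E\subset\dot H^s(\mathbb{R}^n)$ cut out by the orthogonality constraints $\int U_{P^i,\Lambda_i}^{4s/(n-2s)}Z_{i,j}\phi\,dx=0$ for all $i,j$; the multipliers $c_{ij}^{(m)}$ are then recovered from $\phi_m$ a posteriori.

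First I would verify $T$ maps a suitable ball into itself. Set $\rho:=\lambda^{-(\frac{n+2s}{2}-\tau)}$ and let $B_\rho:=\{\phi\in E:\|\phi\|_*\leq M\rho\}$ with $M$ a large constant to be fixed. For $\phi\in B_\rho$, Lemma \ref{lm7} together with Proposition \ref{prop4} gives
\begin{equation*}
\|T(\phi)\|_* \leq C\bigl(\|N(\phi)\|_{**}+\|l_m\|_{**}\bigr) \leq C\bigl(\|\phi\|_*^{\min\{2,\,2^*(s)-1\}} + \rho\bigr).
\end{equation*}
Since $\min\{2,2^*(s)-1\}>1$ and $\tau<\frac{n-2s}{2}<\frac{n+2s}{2}$, the first term is $o(\rho)$ as $\lambda\to\infty$, so by choosing $M$ large and then $\lambda$ large we obtain $T(B_\rho)\subset B_\rho$.

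Next I would show $T$ is a contraction. Writing $p=(n+2s)/(n-2s)$ and using the pointwise bound
\begin{equation*}
\bigl|(W_m+\phi_1)_+^p-(W_m+\phi_2)_+^p-pW_m^{p-1}(\phi_1-\phi_2)\bigr| \leq C\bigl(|\phi_1|^{p-1}+|\phi_2|^{p-1}+W_m^{(p-2)_+}(|\phi_1|+|\phi_2|)\bigr)|\phi_1-\phi_2|,
\end{equation*}
and then the same weighted-norm estimates used to prove $\|N(\phi)\|_{**}\leq C\|\phi\|_*^{\min\{2,p\}}$ in Lemma \ref{lm7}, one obtains
\begin{equation*}
\|N(\phi_1)-N(\phi_2)\|_{**}\leq C\bigl(\|\phi_1\|_*+\|\phi_2\|_*\bigr)^{\min\{1,\,p-1\}}\|\phi_1-\phi_2\|_*.
\end{equation*}
On $B_\rho$ the prefactor is $O(\rho^{\min\{1,p-1\}})=o(1)$ as $\lambda\to\infty$, so $T$ is a contraction on $B_\rho$. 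Banach's theorem produces a unique $\phi_m\in B_\rho$ with $\|\phi_m\|_*\leq C\rho=C\lambda^{-(\frac{n+2s}{2}-\tau)}$; the Hölder regularity $\phi_m\in C^{0,\alpha}_{loc}$ with $\alpha=\min\{2s,1\}$ follows from Lemma \ref{lm12} applied to \eqref{22} once the right-hand side is known to be in the requisite weighted space.

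Finally, the bound on $c_{ij}^{(m)}$ is immediate: applying Proposition \ref{prop4} with $g=N(\phi_m)+l_m$, we have $\|g\|_{**}\leq C\rho$, hence
\begin{equation*}
|c_{ij}^{(m)}|\leq \frac{C}{\lambda^{\tau-s}}\|g\|_{**} \leq \frac{C}{\lambda^{\tau-s}}\cdot\lambda^{-(\frac{n+2s}{2}-\tau)} = C\lambda^{-n/2}.
\end{equation*}
The main technical obstacle is the quasi-Lipschitz estimate on $N$ in the weighted $\|\cdot\|_{**}$ norm, where one has to treat the two regimes $p\leq 2$ and $p>2$ separately and combine the pointwise inequality above with the same delicate splitting into the regions $\Omega_i\cap B_1$, $\Omega_i\cap B_1^c\cap B_{i,m}$, and $\Omega_i\cap B_{i,m}^c$ used in Lemma \ref{lm7}; every other step is either standard contraction-mapping bookkeeping or a direct application of the lemmas already proved.
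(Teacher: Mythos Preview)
Your proposal is correct and follows essentially the same contraction-mapping scheme as the paper: recast \eqref{22} as $\phi=L_m(N(\phi)+l_m)$, verify self-mapping and contraction on a small ball using Lemma~\ref{lm7} and Proposition~\ref{prop4}, then read off the bound on $c_{ij}^{(m)}$ from the estimate $|c_{ij}^{(m)}|\leq C\lambda^{-(\tau-s)}\|g\|_{**}$. The only cosmetic difference is that the paper takes the ball of radius $\lambda^{-(\frac{n+2s}{2}-\tau-\epsilon_1)}$ with a small $\epsilon_1>0$ and then bootstraps once to the sharper bound $\|\phi_m\|_*\leq C\lambda^{-(\frac{n+2s}{2}-\tau)}$, whereas you take radius $M\lambda^{-(\frac{n+2s}{2}-\tau)}$ directly; both choices work for the same reason, namely $\min\{2,2^*(s)-1\}>1$.
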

\begin{proof}
We define
\[E=\left\{\varphi\in \dot{H}(\mathbb{R}^n)\cap C(\mathbb{R}^n):\;\|\varphi\|_*\leq \frac1{\lambda^{\frac{n+2s}2-\tau-\epsilon_1}},\;\;\int_{\mathbb{R}^n}U_{P^i,\Lambda_i}^{\frac{4s }{n-2s}}Z_{i,j}\varphi=0,\;\;\begin{array}{ll}i=1,\dots,(m+1)^k,\\j=1,\dots,n+1\end{array}\right\},\]
where $\epsilon_1=\min\{\frac 14,\frac{2s}{n+2s}\}(\frac{n+2s}2-\tau)$. Notice that $(E,\|\cdot\|_*)$ is a metric space.

In order to use the contraction map theorem, we define $A\varphi:=L_m(N(\varphi)+l_m)$, where $L_m$ is an operator defined in Proposition \ref{prop4}.

Firstly, we show that $A$ maps $E$ into itself for $\lambda$ large. Combining Proposition \ref{prop4} and Lemma \ref{lm7}, we have $\forall\varphi\in E$,
\[\|A\varphi\|_*\leq C(\|N(\varphi)\|_{**}+\|l_m\|_{**})\leq C(\|\varphi\|_*^{\min\{2,2^*(s)-1\}}+\|l_m\|_{**})\leq \frac1{\lambda^{\frac{n+2s}2-\tau-\epsilon_1}}.\]

Secondly, we  prove $A$ is an contraction map for $\lambda$ large.

Choose $\varphi_1,\varphi_2\in E$ with $\varphi_1\not=\varphi_2$. If $N\geq 6s$, we have
\begin{eqnarray*}
&&|N(\varphi_1)-N(\varphi_2)|=|N^\prime(t\varphi_1+(1-t)\varphi_2)(\varphi_1-\varphi_2)| \\
&\leq& C(|\varphi_1^{\frac{4s}{n-2s}}|+|\varphi_2^{\frac{4s}{n-2s}}|)(|\varphi_1-\varphi_2|) \\
&\leq& C(\|\varphi_1\|_*^{\frac{4s}{n-2s}}+\|\varphi_2\|_*^{\frac{4s}{n-2s}})\|\varphi_1-\varphi_2\|_*\left( \gamma(x)\sum_{h=1}^{(m+1)^k}\frac1{(1+|x-X^h|)^{\frac{n-2s}2+\tau}}\right)^{\frac{n+2s}{n-2s}} \\
&\leq& C(\|\varphi_1\|_*^{\frac{4s}{n-2s}}+\|\varphi_2\|_*^{\frac{4s}{n-2s}})\|\varphi_1-\varphi_2\|_*\left(\gamma(x) \sum_{ h=1}^{(m+1)^k}\frac1{(1+|x-X^h|)^{\frac{n+2s}2+\tau}}\right).
\end{eqnarray*}
We remind that in the last inequality, we have used the H\"{o}lder inequality.
Hence $\|N(\varphi_1)-N(\varphi_2)\|_{**}\leq  C(\|\varphi_1\|_*^{\frac{4s}{n-2s}}+\|\varphi_2\|_*^{\frac{4s}{n-2s}}) \|\varphi_1-\varphi_2\|_*$.

In the case of $N\leq 6s$, we also have $\|N(\varphi_1)-N(\varphi_2)\|_{**}\leq  C(\|\varphi_1\|_*^{\min\{\frac{4s}{n-2s},1\}}+\|\varphi_2\|_*^{\min\{\frac{4s}{n-2s},1\}}) \|\varphi_1-\varphi_2\|_*$ by a similar argument.

Then there hold
\begin{eqnarray*}
\|A\varphi_1-A\varphi_2\|_*&\leq& C\|N(\varphi_1)-N(\varphi_2)\|_{**} \\
&\leq&  C(\|\varphi_1\|_*^{\min\{\frac{4s}{n-2s},1\}}+\|\varphi_2\|_*^{\min\{\frac{4s}{n-2s},1\}}) \|\varphi_1-\varphi_2\|_* \\
&\leq& \frac C{\lambda^{(\frac{n+2s}2-\tau-\epsilon_1)\min\{\frac{4s}{n-2s},1\}}}\|\varphi_1-\varphi_2\|_*.
\end{eqnarray*}
For $\lambda$ large enough, we get $\|A\varphi_1-A\varphi_2\|_*\leq \frac12\|\varphi_1-\varphi_2\|_*$.

Using the contracting map theorem, we know that there is a unique $\phi_m\in E$, such that $A(\phi_m)=\phi_m$, \textit{i.e.} $\phi_m$ is a unique solution of \eqref{22} in $E$. From Proposition \ref{prop4} and Lemma \ref{lm7}, we know $\phi_m\in \dot{H}^s(\mathbb{R}^n)\cap C^{0,\alpha}(\mathbb{R}^n)$ satisfying $\|\phi_m\|_*=\|A\phi_m\|_*\leq\frac C{\lambda^{\frac{n+2s}2-\tau}}$ and $|c_{ij}^{(m)}|\leq C \lambda^{-\frac n2}$, since $(\frac{n+2s}2-\tau-\epsilon_1)\min\{2,2^*(s)-1\}>\frac{n+2s}2-\tau$.

\end{proof}

\section{Proof of the main theorem}\label{sect3}
Let $\Lambda:=(\Lambda_1,\dots,\Lambda_{(m+1)^k})\in \mathbb{R}_+^{(m+1)^k}$ and $P:=(P^1,\dots,P^{(m+1)^k})\in \mathbb{R}^{n\times (m+1)^k}$, in which $P^i=(P^i_1,\dots,P^i_n)\in \mathbb{R}^n$ for $i=1,2\dots,(m+1)^k$. We define $J(P,\Lambda)=I(W_m+\phi_m)$, where $\phi_m$ is a unique small solution obtained by Proposition \ref{prop1}. A standard argument shows that from a critical point of $J$, we can get  a critical point of $I$ of the form $W_m+\phi_m$(for example, cf. \cite[Lemma 6.1]{del_Pino2003} for ideas).

\begin{proposition}\label{prop5}
For $\lambda$ large, we have the following expansions
\begin{equation}\label{23}
\frac{\partial J}{\partial P^i_j}(P,\Lambda)=-\frac{c_3a_j}{\Lambda_i^{\beta-2}\lambda^\beta}(P^i_j-X^i_j) +O\left(\frac{|P^i-X^i|^2}{\lambda^\beta}\right)+o(\lambda^{-\beta}),
\end{equation}
and
\begin{equation}\label{24}
\frac{\partial J}{\partial \Lambda_i}(P,\Lambda)=-\frac{c_1}{\Lambda_i^{\beta+1}\lambda^\beta}+\sum_{h\not=i}\frac{c_2}{ \Lambda_i(\Lambda_i\Lambda_h)^{\frac{n-2s}2}|X^i-X^h|^{n-2s}}+O\left(\frac{|P^i-X^i|^{\min\{2,\beta-1\}}}{ \lambda^\beta}\right)+o(\lambda^{-\beta}),
\end{equation}
where $i=1,\dots,(m+1)^k$ and $j=1,\dots,n$ and the constant $c_1,c_2,c_3$ are positive.
\end{proposition}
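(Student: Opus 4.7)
\emph{Strategy.} The plan is to apply the chain rule to $J(P,\Lambda)=I(W_m+\phi_m)$, writing $\partial J/\partial\alpha = I'(W_m+\phi_m)[\partial_\alpha W_m] + I'(W_m+\phi_m)[\partial_\alpha\phi_m]$ for $\alpha\in\{\Lambda_r,P^r_j\}$, and to reduce the problem to expanding $I'(W_m)[\partial_\alpha W_m]$. The second summand is controlled by differentiating the orthogonality conditions $\int U_{P^i,\Lambda_i}^{4s/(n-2s)}Z_{i,j}\phi_m=0$ in $\alpha$, which gives $\int U_{P^i,\Lambda_i}^{4s/(n-2s)}Z_{i,j}\partial_\alpha\phi_m = -\int\partial_\alpha(U_{P^i,\Lambda_i}^{4s/(n-2s)}Z_{i,j})\phi_m$. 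Combined with the weak representation $I'(W_m+\phi_m)=\sum_{i,j}c^{(m)}_{ij}U_{P^i,\Lambda_i}^{4s/(n-2s)}Z_{i,j}$ coming from \eqref{22} and the bounds $|c^{(m)}_{ij}|\leq C\lambda^{-n/2}$, $\|\phi_m\|_*\leq C\lambda^{-(n+2s)/2+\tau}$ of Proposition \ref{prop1}, this forces the $\partial_\alpha\phi_m$-contribution to be $o(\lambda^{-\beta})$.

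\emph{Reduction to $I'(W_m)$.} Taylor-expanding $I'$ around $W_m$ yields $I'(W_m+\phi_m)[Z] = I'(W_m)[Z] + I''(W_m)[\phi_m,Z] + O(\|\phi_m\|_*^{1+\min\{1,4s/(n-2s)\}})$ for $Z=\partial_\alpha W_m$. Substituting \eqref{22} into $I''(W_m)[\phi_m,Z] = \int[(-\Delta)^s\phi_m - \frac{n+2s}{n-2s}K(\cdot/\lambda)W_m^{4s/(n-2s)}\phi_m]Z\,dx$ reduces it to $\int(N(\phi_m)+l_m+\sum_{i,j} c^{(m)}_{ij}U_{P^i,\Lambda_i}^{4s/(n-2s)}Z_{i,j})Z\,dx$, each of whose pieces is $o(\lambda^{-\beta})$ by Lemma \ref{lm7} (giving $\|N(\phi_m)\|_{**}\leq C\|\phi_m\|_*^{\min\{2,2^*(s)-1\}}$ and $\|l_m\|_{**}\leq C\lambda^{-(n+2s)/2+\tau}$) together with the small $c^{(m)}_{ij}$ and the convergent sum \eqref{67}. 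The residual $I'(W_m)[Z]$, after using $(-\Delta)^sU_{P^h,\Lambda_h}=U_{P^h,\Lambda_h}^{(n+2s)/(n-2s)}$ and $W_m=\sum_h U_{P^h,\Lambda_h}$, splits into a $K$-deviation integral $-\int(K(x/\lambda)-1)W_m^{(n+2s)/(n-2s)}Z\,dx$ and a bump-interaction integral $\int[\sum_hU_{P^h,\Lambda_h}^{(n+2s)/(n-2s)}-W_m^{(n+2s)/(n-2s)}]Z\,dx$.

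\emph{Extraction of the constants.} For $Z$ concentrated near $P^r$ (either $Z=Z_{r,j}$ or $Z=Z_{r,n+1}$), I would localize $W_m^{(n+2s)/(n-2s)}\approx U_{P^r,\Lambda_r}^{(n+2s)/(n-2s)}$ inside the $K$-deviation integral, exploit $U_{P^r,\Lambda_r}^{(n+2s)/(n-2s)}Z = \tfrac{n-2s}{2n}\partial_\alpha U_{P^r,\Lambda_r}^{2n/(n-2s)}$, change variables $y=\Lambda_r(x-P^r)$, and apply the $(H_3)$-expansion $K(x/\lambda)-1 = \lambda^{-\beta}\sum_i a_i|y_i/\Lambda_r+(P^r_i-X^r_i)|^\beta + R(\cdot)$, where the $1$-periodicity of $K$ in the first $k$ variables has been used to translate from $X^r/\lambda$ to $0$. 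For $\alpha=P^r_j$, parity in $y_j$ annihilates the leading contribution at $P^r=X^r$, so the Taylor expansion in $P^r-X^r$ produces the linear-in-$(P^r_j-X^r_j)$ term $-c_3a_j(P^r_j-X^r_j)/(\Lambda_r^{\beta-2}\lambda^\beta)$ with remainder $O(|P^r-X^r|^{\min\{2,\beta-1\}}/\lambda^\beta)$; for $\alpha=\Lambda_r$ the leading term survives and yields $-c_1/(\Lambda_r^{\beta+1}\lambda^\beta)$. In the bump-interaction integral, expanding $(W_m-U_{P^r,\Lambda_r})(x)\approx\sum_{h\neq r}U_{P^h,\Lambda_h}(P^r)\approx\sum_{h\neq r}C_0\Lambda_h^{-(n-2s)/2}|X^r-X^h|^{-(n-2s)}$ near $P^r$ and using $(\lambda l)^{n-2s}=\lambda^\beta$, the $\alpha=\Lambda_r$ derivative produces exactly $\sum_{h\neq r}c_2/(\Lambda_r(\Lambda_r\Lambda_h)^{(n-2s)/2}|X^r-X^h|^{n-2s})$, while the $\alpha=P^r_j$ derivative gains an extra decay factor $|X^h-X^r|^{-1}$ and is absorbed into $o(\lambda^{-\beta})$. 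Positivity of $c_1,c_2,c_3$ follows from explicit moment integrals of the bubble $U_{0,1}$.

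\emph{Main obstacle.} The principal difficulty will be to keep every error estimate uniform in $m$ despite the sum over $h\neq r$ having up to $(m+1)^k$ terms; this rests on the absolute convergence of $\sum_{h\neq r}|X^h-X^r|^{-(n-2s)}$ guaranteed by $k<(n-2s)/2$, exactly as in \eqref{67}. The choice $\tau\in(k,(n-2s)/2)$ must be made so that the $N(\phi_m)$ and $l_m$ contributions tested against $\partial_\alpha W_m$, controlled by Lemma \ref{lm7} and Proposition \ref{prop1}, are $o(\lambda^{-\beta})$. A further subtlety is that, since $\beta$ may be non-integer, the Taylor expansion of $|y_i/\Lambda_r + (P^r_i - X^r_i)|^\beta$ in $P^r-X^r$ must be carried out carefully, producing the stated remainder of order $\min\{2,\beta-1\}$.
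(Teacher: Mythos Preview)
Your overall strategy---chain rule, differentiating the orthogonality constraints to handle $\partial_\alpha\phi_m$, then reducing to $\partial_\alpha I(W_m)$---is the same as the paper's, and your ``Extraction of the constants'' paragraph matches the content of Propositions~\ref{prop2} and~\ref{prop3} in Appendix~\ref{sect5}. The handling of $I'(W_m+\phi_m)[\partial_\alpha\phi_m]$ is also correct and is exactly what the paper does.

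There is, however, a genuine gap in your ``Reduction to $I'(W_m)$'' step. After substituting \eqref{22} into $I''(W_m)[\phi_m,Z]$ you claim that the three resulting pieces $\int N(\phi_m)Z$, $\int l_m Z$, and $\sum c^{(m)}_{ij}\int U_{P^i,\Lambda_i}^{4s/(n-2s)}Z_{i,j}Z$ are each $o(\lambda^{-\beta})$. This is false for the last two. Using only $\|l_m\|_{**}\le C\lambda^{-(n+2s)/2+\tau}$ and testing against $Z_{r,t}$ as in the proof of Lemma~\ref{lm15}, one gets at best $|\int l_m Z_{r,t}|\le C\lambda^{-n/2}$; but since $n>2+2s>4s$ we have $\beta>n-2s>n/2$, so $\lambda^{-n/2}$ is \emph{larger} than $\lambda^{-\beta}$, not smaller. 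Worse, observe the exact identity
\[
\int_{\mathbb R^n} l_m\,Z_{r,t}\,dx
=\int_{\mathbb R^n}\Big(K(\tfrac x\lambda)W_m^{\frac{n+2s}{n-2s}}-\sum_h U_{P^h,\Lambda_h}^{\frac{n+2s}{n-2s}}\Big)Z_{r,t}\,dx
=-\,I'(W_m)[Z_{r,t}],
\]
so the substitution literally cancels the leading term you are trying to isolate: your expansion collapses to $I'(W_m+\phi_m)[Z]=\int N(\phi_m)Z+\sum c^{(m)}_{ij}\int U^{4s/(n-2s)}Z_{i,j}Z+\ldots$, with the main contribution now hidden inside the $c^{(m)}_{ij}$ (for which the crude bound $|c^{(m)}_{ij}|\le C\lambda^{-n/2}$ is again insufficient).

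The paper's route avoids this circularity by \emph{not} feeding \eqref{22} back into the second variation. Instead one writes
\[
I''(W_m)[\phi_m,Z_{r,t}]
=\int(-\Delta)^s\phi_m\,Z_{r,t}-\tfrac{n+2s}{n-2s}\int K(\tfrac x\lambda)W_m^{\frac{4s}{n-2s}}\phi_m\,Z_{r,t},
\]
kills the first integral via $\int(-\Delta)^s\phi_m Z_{r,t}=\tfrac{n+2s}{n-2s}\int U_{P^r,\Lambda_r}^{4s/(n-2s)}Z_{r,t}\phi_m=0$ (orthogonality), and then estimates the remaining integral \emph{directly} with Lemma~\ref{lm5}. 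The key point of Lemma~\ref{lm5} is that, after splitting off the bump at $r$ and invoking orthogonality once more, the dominant piece becomes $\int(K(\tfrac x\lambda)-1)U_{P^r,\Lambda_r}^{4s/(n-2s)}Z_{r,t}\phi_m$, in which the factor $K-1$ supplies the additional decay needed to reach $O(\lambda^{-n})=o(\lambda^{-\beta})$. Your substitution of \eqref{22} bypasses precisely this cancellation, which is why the estimate fails. Replace that step by the orthogonality-plus-Lemma~\ref{lm5} argument and the rest of your outline goes through.
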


\begin{proof}
A simple calculation yields
\begin{eqnarray*}
\nonumber\frac{\partial J}{\partial P^i_j}(P,\Lambda)&=&\langle I^\prime(W_m+\phi_m),\frac{\partial U_{P^i,\Lambda_i}}{\partial P^i_j}+\frac{\partial \phi_m}{\partial P^i_j}\rangle \\
\nonumber&=& \frac{\partial I}{\partial P^i_j}(W_m)+\int_{\mathbb{R}^n}K(\frac x\lambda)(W_m^{\frac{n+2s}{n-2s}}-(W_m+\phi_m)_+^{\frac{n+2s}{n-2s}})\frac{\partial U_{P^i,\Lambda_i}}{\partial P^i_j} \\
&&+\sum_{t=1}^{(m+1)^k}\sum_{h=1}^{n+1}c^{(m)}_{th}\int_{\mathbb{R}^n}U_{P^t,\Lambda_t}^{\frac{4s}{n-2s}}Z_{t,h} \frac{\partial \phi_m}{\partial P^i_j}.
\end{eqnarray*}
The functional $\frac{\partial I}{\partial P^i_j}(W_m)$ is expanded in the Proposition \ref{prop2}. So we only need to estimate the last two terms in the eqality above.

From Lemma \ref{lm13}, we see that for $\lambda$ large enough, $\{x: W_m\leq -\phi_m\}\subset\{x: \frac12W_m\leq |\phi_m|\}\subset \cup_h(\Omega_h\cap B_h^c)$. Then we have
\begin{eqnarray*}
&&\int_{\mathbb{R}^n}K(\frac x\lambda)((W_m+\phi_m)_+^{\frac{n+2s}{n-2s}}-W_m^{\frac{n+2s}{n-2s}})\frac{\partial U_{P^i,\Lambda_i}}{\partial P^i_j}dx\\
&=&\int_{\mathbb{R}^n}K(\frac x\lambda)((W_m+\phi_m)^{\frac{n+2s}{n-2s}} -W_m^{\frac{n+2s}{n-2s}})\frac{\partial U_{P^i,\Lambda_i}}{\partial P^i_j}dx-\int_{-\phi_m\geq W_m}K(\frac x\lambda)(W_m+\phi_m)^{\frac{n+2s}{n-2s}}\frac{\partial U_{P^i,\Lambda_i}}{\partial P^i_j}dx \\
&=& \frac{n+2s}{n-2s}\int_{\mathbb{R}^n}K(\frac x\lambda)W_m^{\frac{4s}{n-2s}}\phi_m\frac{\partial U_{P^i,\Lambda_i}}{\partial P^i_j}dx+O\left(\int_{|\phi_m|\geq\frac12W_m} |\phi_m|^{\frac{n+2s}{n-2s}}U_{P^i,\Lambda_i}dx \right. \\
&&\left.+\int_{|\phi_m|<\frac12W_m} W_m^{\frac{6s-n}{n-2s}}\phi_m^2\frac{\partial U_{P^i,\Lambda_i}}{\partial P^i_j}dx\right) \\
&=&\frac{n+2s}{n-2s}\int_{\mathbb{R}^n}K(\frac x\lambda)W_m^{\frac{4s}{n-2s}}\phi_m\frac{\partial U_{P^i,\Lambda_i}}{\partial P^i_j}dx+O\left(\int_{\cup_h (\Omega_h\cap B_h^c)}|\phi_m|^{\frac{n+2s}{n-2s}}U_{P^i,\Lambda_i}dx\right. \\
&&\left.+\int_{\mathbb{R}^n} W_m^{\frac{6s-n}{n-2s}}\phi_m^2U_{P^i,\Lambda_i}dx\right).
\end{eqnarray*}
Using Proposition \ref{prop1}, Lemma \ref{lm5} and Lemma \ref{lm14}, we have
\begin{equation*}
\int_{\mathbb{R}^n}K(\frac x\lambda)((W_m+\phi_m)_+^{\frac{n+2s}{n-2s}}-W_m^{\frac{n+2s}{n-2s}})\frac{\partial U_{P^i,\Lambda_i}}{\partial P^i_j}dx=O\left(\lambda^{-n}\right)=o\left(\lambda^{-\beta}\right).
\end{equation*}
By using the orthogonal condition of \eqref{22} and Lemma \ref{lm4}, we have
\begin{eqnarray*}
&&\left|\sum_{t=1}^{(m+1)^k}\sum_{h=1}^{n+1}c_{th}\int_{\mathbb{R}^n}U_{P^t,\Lambda_t}^{\frac{4s}{n-2s}}Z_{t,h} \frac{\partial \phi_m}{\partial P^i_j}dx\right|=\left|\sum_{h=1}^{n+1}c_{ih}\int_{\mathbb{R}^n}\frac{\partial }{\partial P^i_j}(U_{P^i,\Lambda_i}^{\frac{4s}{n-2s}}Z_{i,h})\phi_mdx\right| \\
&\leq& \frac C{\lambda^{\frac n2}}\frac{\|\phi_m\|_*}{\lambda^{\tau-s}}\int_{\mathbb{R}^n}\frac1{(1+|x-X^i|)^{ n+2s}}\sum_{r=1}^{(m+1)^k}\frac1{(1+|x-X^r|)^{\frac n2}}dx\leq C\lambda^{-n}.
\end{eqnarray*}
Hence we can get \eqref{23}. The estimation \eqref{24} can be derived by the same procedure along with Proposition \ref{prop3}.

\end{proof}

\begin{remark}
From Proposition \ref{prop5}, we know that there  exist bounded functions $\Xi^i_j=\Xi^i_j(P,\Lambda,\lambda)$ and $\Theta^i_j=\Theta^i_j(P,\Lambda,\lambda)$, $i=1,\dots,(m+1)^k$; $j=1,\dots,n+1$ satisfying
\[|\Xi^i_j|\leq C, \text{ where $C$ is constant independent of $m$}\]
and
\[|\Theta^i_j|\leq C_\lambda, \text{ where $C_\lambda$ is a constant only depend on $\lambda$, and $C_\lambda\to 0$ as $\lambda\to \infty$}\]
such that
\begin{equation*}
\frac{\partial J}{\partial P^i_j}(P,\Lambda)=-\frac{c_3a_j}{\Lambda_i^{\beta-2}\lambda^\beta}(P^i_j-X^i_j) +\frac{|P^i-X^i|^2}{\lambda^\beta}\Xi^i_j+\lambda^{-\beta}\Theta^i_j,
\end{equation*}
and
\begin{equation*}
\frac{\partial J}{\partial \Lambda_i}(P,\Lambda)=-\frac{c_1}{\Lambda_i^{\beta+1}\lambda^\beta}+\sum_{h\not=i}\frac{c_2}{ \Lambda_i(\Lambda_i\Lambda_h)^{\frac{n-2s}2}|X^i-X^h|^{n-2s}}-\frac{|P^i-X^i|^{\min\{2,\beta-1\}}}{ \lambda^\beta}\Xi^i_{n+1}-\lambda^{-\beta}\Theta^i_{n+1}.
\end{equation*}
\end{remark}

\noindent\textit{Proof of Theorem \ref{th2}.}
Firstly we look for the solution of \eqref{2} of the form $W_m+\phi_m$, $m<\infty$. It is equivalent to solving the system $\left\{\begin{array}{ll}\frac{\partial J}{\partial P^i_j}(P,\Lambda)=0, \\\\ \frac{\partial J}{\partial \Lambda_i}(P,\Lambda)=0,\end{array}\right.\;i=1,2,\dots,(m+1)^k;\;\;j=1,2,\dots,n$, that is
\begin{equation}\label{25}
\left\{\begin{array}{ll}
\frac{c_3a_j}{\Lambda_i^{\beta-2}\lambda^\beta}(P^i_j-X^i_j)=\frac{|P^i-X^i|^2}{\lambda^\beta}\Xi^i_j+ \lambda^{-\beta}\Theta^i_j; \\
-\frac{c_1}{\Lambda_i^{\beta+1}\lambda^\beta}+\sum_{h\not=i}\frac{c_2}{\Lambda_i(\Lambda_i\Lambda_h)^{ \frac{n-2s}2 }|X^i-X^h|^{n-2s}}=\frac{|P^i-X^i|^{\min\{2,\beta-1\}}}{ \lambda^\beta}\Xi^i_{n+1}+\lambda^{-\beta}\Theta^i_{n+1}.
\end{array}\right.
\end{equation}

To simplify the equations \eqref{25}, we denote $d_j=\Lambda_j^{-\frac{n-2s}2}$ and $A_{ih}=\left\{ \begin{array}{ll} 0,&\mbox{if $i=h$}, \\ \frac{(\lambda l)^{n-2s}}{|X^i-X^h|^{n-2s}}, &\mbox{if $i\not=h$}.\end{array}\right.$  The equations \eqref{25} can be written as
\begin{equation}\label{26}
\left\{
\begin{array}{ll} P^i_j-X^i_j=\frac{\Lambda_i^{\beta-2}\Xi^i_j}{c_3a_j}|P^i-X^i|^2+\frac{\Lambda_i^{\beta-2} }{c_3a_j }\Theta^i_j,\\
c_2\sum_{h\not=i}A_{ih}d_h-c_1d_i^{\frac{2\beta}{n-2s}-1}=\Lambda_i^{\frac{n-2s}2+1}\Xi^i_{n+1} |P^i-X^i|^{\min\{2,\beta-1\}}+\Lambda_i^{\frac{n-2s}2+1}\Theta^i_{n+1},\end{array} \right.
\end{equation}
where $i=1,2,\dots,(m+1)^k$ and $j=1,2,\dots,n$.

Define a function $F(z):=\frac{c_2}{2}\sum_{h\not=i}A_{ih}z_iz_h-\frac{(n-2s)c_1}{2\beta}\sum_h z_h^{\frac{2\beta}{n-2s}}$, where $z=(z_1,z_2,\dots,z_{(m+1)^k})\in \mathbb{R}^{(m+1)^k}$. Obviously, $F(z)$ has a maximum point $b=(b_1,b_2,\dots,b_{(m+1)^k})\in \mathbb{R}_+^{(m+1)^k}$. It holds that
\begin{equation}\label{27}
c_2\sum_{h\not=i}A_{ih} b_h-c_1b_i^{\frac{2\beta}{n-2s}-1}=0,\;\;\;\;i=1,\dots,(m+1)^k.
\end{equation}

\noindent\textbf{Claim: }Each component $b_i$ of $b$ satisfies $0<C_1^\prime\leq b_i\leq C_2^\prime$ for some constant $C_1^\prime$ and $C_2^\prime$.

Suppose that $b_1\leq b_i\leq b_2$. Using the definition of $A_{ih}$, we know $\sum_{h\not=i}A_{ih}$ is bounded. From \eqref{27}, we can get
\[c_1b_2^{\frac{2\beta}{n-2s}-1}=c_2\sum_{h\not=2}A_{2h}b_h\leq C_3 b_2,\]
which tell us $b_2$ is bounded from above.

Using \eqref{27} again, we have
\[c_1b_1^{\frac{2\beta}{n-2s}-1}=c_2\sum_{h\not=1}A_{1h}b_h\geq c_2\sum_{h\not=1}A_{1h}b_1\geq c_2A_{12}b_1,\]
which implies $b_1$ is bounded from below, away from zero. Hence the Claim follows.

We can choose a small $\delta_0>0$ such that $b_2^{-\frac2{n-2s}}-\delta_0>0$. The constant $C_1$ and $C_2$ in the introduction can be defined by
\begin{equation}\label{63}
C_1=b_2^{-\frac2{n-2s}}-\delta_0\;\; \text{and} \;\;C_2=b_1^{-\frac2{n-2s}}+\delta_0.
\end{equation}

For any $x=(x_1,\dots,x_{(m+1)^k})\in \mathbb{R}^{(m+1)^k}$, we denote $\|x\|_0=\max_j\{|x_j|\}$.  Let $|\frac{x_{i_0}}{b_{i_0}}|=\|\frac{x}{b}\|_0$. From the claim above, we know $|x_{i_0}|\geq C\|x\|_0$. Using \eqref{27}, we have
\begin{eqnarray*}
|(D^2F(b)x)_{i_0}|&=&|c_2\sum_{h\not=i_0}A_{i_0h}x_h-c_1(\frac{2\beta}{n-2s}-1)b_{i_0}^{\frac{2\beta}{n-2s}-2} x_{i_0}| \\
&\geq& c_1(\frac{2\beta}{n-2s}-1)b_{i_0}^{\frac{2\beta}{n-2s}-2}|x_{i_0}|-c_2|\sum_{h\not=i}A_{i_0h}x_h| \\
&\geq& c_1(\frac{2\beta}{n-2s}-1)b_{i_0}^{\frac{2\beta}{n-2s}-1}|\frac{x_{i_0}}{b_{i_0}}| -c_2\sum_{h\not=i}A_{i_0h}b_h|\frac{x_{i_0}}{b_{i_0}}| \\
&=& c_1(\frac{2\beta}{n-2s}-2)b_{i_0}^{\frac{2\beta}{n-2s}-2}|x_{i_0}|\geq C_4\|x\|_0.
\end{eqnarray*}
From the definition of $\|\cdot\|_0$, we get $\|D^2F(b)x\|_0\geq C_4\|x\|_0$.

Let $\theta=(\theta_1,\dots,\theta_{(m+1)^k})\in \mathbb{R}^{(m+1)^k}$ whose component $\theta_i:=d_i-b_i$, $i=1,\dots,(m+1)^k$. We define $X:=(X^1,\dots,X^{(m+1)^k})\in \mathbb{R}^{n\times(m+1)^k}$, in which $X^i=(X^i_1,\dots,X^i_n)\in \mathbb{R}^n$ for $i=1,\dots,(m+1)^k$. For any $Y=(Y^1,\dots,Y^{(m+1)^k})\in \mathbb{R}^{n\times (m+1)^k}$, we use the notation $\|Y\|:=\max_{i=1,\dots,(m+1)^k}\{|Y^i|\}$ to denote the maximum norm.

To simplify the equations \eqref{26}, we need to define some vector value functions below.
Let $\Xi^{(1)}:=\Xi^{(1)}(P,\Lambda,\lambda)\in \mathbb{R}^{n\times (m+1)^k}$ and $\Theta^{(1)}:=\Theta^{(1)}(P,\Lambda,\lambda)\in \mathbb{R}^{n\times (m+1)^k}$ with their exponents defined by
\[(\Xi^{(1)})^i_j=\frac{\Lambda_i^{\beta-2}\Xi^i_j|P^i-X^i|^2}{c_3a_j\|P-X\|^2}\;\;\;\; \text{and}\;\;\;\;
(\Theta^{(1)})^i_j=\frac{\Lambda_i^{\beta-2}}{c_3a_j}\Theta^i_j,\;\;\;\;i=1,\dots,(m+1)^k;j=1,\dots,n.\]
Let $\Xi^{(2)}:=\Xi^{(2)}(P,\Lambda,\lambda)\in \mathbb{R}^{(m+1)^k}$, $\Theta^{(2)}:=\Theta^{(2)}(P,\Lambda,\lambda)\in \mathbb{R}^{(m+1)^k}$ with exponents defined by
\[(\Xi^{(2)})^i=\Lambda_i^{\frac{n-2s}2+1}\Xi^i_{n+1}\frac{|P^i-X^i|^{\min\{2,\beta-1\}}}{\|P-X\|^{\min\{2, \beta-1\}}}\;\;\;\; \text{and}\;\;\;\;(\Theta^{(2)})^i=\Lambda_i^{\frac{n-2s}2+1}\Theta_{n+1}^i,\;\;\;\;i=1,\dots,(m+1)^k.\]
Define $\Pi(\theta):=(\Pi(\theta)^1,\dots,\Pi(\theta)^{(m+1)^k} )$, where $\Pi(\theta)^i(\;i=1,\dots,(m+1)^k)$ is defined by
\[(\Pi(\theta))^i=\int_0^1(D^3F(b+s\theta)\theta,\theta)_i(1-s)ds=\int_0^1 c_1(\frac{2\beta}{n-2s}-1)(\frac{2\beta}{n-2s}-2)(b_i+s\theta_i)^{\frac{2\beta}{n-2s}-3}\theta_i^2.\]

From their definition, we know there is a constant $C$ and a constant $C_\lambda$ satisfying $C_\lambda\to 0$ as $\lambda\to\infty$ such that  $\|\Xi^{(1)}\|\leq C$, $\|\Xi^{(2)}\|_0\leq C$; $\|\Theta^{(1)}\|\leq C_\lambda$ and $\|\Theta^{(2)}\|_0\leq C_\lambda$.

Using these notations and Taylor expansion, we can write the equations \eqref{26} into another form:
\begin{equation}\label{28}
\left\{\begin{array}{ll}
P-X=\|P-X\|^2\Xi^{(1)}+\Theta^{(1)};\\
D^2F(b)\theta=\|P-X\|^{\min\{2,\beta-1\}}\Xi^{(2)}+\Theta^{(2)}+\Pi(\theta),\end{array}\right.
\end{equation}

Let
\[
B=\left(\prod_{i=1}^{(m+1)^k}B_{2C_\lambda}(X^i)\right)\times B_{3C_4^{-1}C_\lambda}(0)\in \mathbb{R}^{n\times(m+1)^k }\times \mathbb{R}^{(m+1)^k}.
\]
Define a function
\begin{eqnarray*}
G:\;\;\;\;B\;\;&\to& B \\
(P,\theta)&\mapsto&(X+\Xi^{(1)}\|P-X\|^2+\Theta^{(1)},D^2F(b)^{-1}(\|P-X\|^{\beta-1}\Xi^{(2)}+\Theta^{(2)} +\Pi(\theta)))
\end{eqnarray*}
For each $(P,\theta)\in B$, Choose $C_\lambda$ small enough, we have
\[\|\Xi^{(1)}\|P-X\|^2+\Theta^{(1)}\|\leq C(2C_\lambda)^2+C_\lambda\leq 2C_\lambda^2, \]
and
\[\|D^2F(b)^{-1}(\|P-X\|^{\min\{2,\beta-1\}}\Xi^{(2)}+\Pi(\theta))\|_0\leq C_4^{-1}(CC_\lambda^{\min\{2,\beta-1\}}+C_\lambda+C\theta^2)\leq 3C_4^{-1}C_\lambda.\]

Since $C_\lambda\to 0$ as $\lambda\to\infty$, so for $\lambda$ large enough, we use the Brouwer fixed-point theorem to get a solution $(P^1,\dots,P^{(m+1)^k},\theta)$ of \eqref{28} in $B$. It holds that
\[|P^i-X^i|\leq 2C_\lambda\;\;\;\;\text{and}\;\;\;\;|\theta_i|=|b_i-\Lambda_i^{-\frac{n-2s}2}|\leq 3C_4^{-1} C_\lambda.\]
Hence we find a critical point of $I$ of the form $u_m:=W_m+\phi_m$ with $m<\infty$.

Next, we prove $u_m$ is a positive function. Denote $u_m^-=\min\{0,u_m\}$ and $u_m^+=u_m-u_m^-$. Then we have
\begin{equation*}
\int_{\mathbb{R}^n}(-\Delta)^su_m(x)u_m^-(x)dx=\int_{\mathbb{R}^n} K(\frac x\lambda)u_m^{\frac{n+2s}{n-2s}}(x)u_m^-(x)dx.
\end{equation*}
From the definition of $(-\Delta)^s$,

\begin{eqnarray*}
\int_{\mathbb{R}^n}(-\Delta)^su_mu_m^-dx &=&\int_{\mathbb{R}^n}(-\Delta)^su_m^-u_m^-dx+\int_{\mathbb{R}^n} (-\Delta)^s u_m^+u_m^-dx \\
&=&\int_{\mathbb{R}^n}|(-\Delta)^{\frac s2}u_m^-|^2dx+\int_{\mathbb{R}^n}\int_{\mathbb{R}^n}\frac{(u_m^+(x)- u_m^+(y)) u_m^-(x)}{|x-y|^{n+2s}}dxdy \\
&\geq& \int_{\mathbb{R}^n}|(-\Delta)^{\frac s2}u_m^-|^2dx.
\end{eqnarray*}
The Hardy-Littlewood-Sobolev inequality yields
\begin{eqnarray*}
\left(\int_{\mathbb{R}^n}|u_m^-|^{\frac{2n}{n-2s}}dx\right)^{\frac{n-2s}{n}}&\leq& C \int_{\mathbb{R}^n}|(-\Delta)^{\frac s2}u_m^-|^2dx \\
&\leq& C\int_{\mathbb{R}^n} K(\frac x\lambda)u_m^{\frac{n+2s}{n-2s}}(x)u_m^-(x)dx\leq C\int_{\mathbb{R}^n}|u_m^-|^{\frac{2n}{n-2s}}dx.
\end{eqnarray*}
Suppose $u_m^-\not\equiv 0$, we have $\int_{\mathbb{R}^n}|u_m^-|^{\frac{2n}{n-2s}}dx\geq C$. It is easy to get $u_m^-\leq |\phi_m|$. From this fact,
\begin{eqnarray*}
\int_{\mathbb{R}^n}|u_m^-|^{\frac{2n}{n-2s}}dx&\leq& \int_{\mathbb{R}^n}|\phi_m|^{\frac{2n}{n-2s}} dx\\
&\leq& \|\phi_m\|^{\frac{2n}{n-2s}}_*\int_{\mathbb{R}^n} \left(\gamma(x)\sum_{h=1}^{(m+1)^k}\frac1{(1+|x-X^h|)^{ \frac{n-2s}2+\tau}}\right)^{\frac{2n}{n-2s}}dx \\
&\leq& \|\phi_m\|^{\frac{2n}{n-2s}}_*C(m,k)\sum_h\int_{\mathbb{R}^n}\frac1{(1+|x-X^h|)^{n+\frac{2n}{n-2s}\tau}}dx.
\end{eqnarray*}
So we get $C\leq \int_{\mathbb{R}^n}|u_m^-|^{\frac{2n}{n-2s}}dx\leq C(m,k)\|\phi_m\|_*\to 0$ as $\lambda\to\infty$, which is impossible. Hence $u_m\geq 0$. Suppose there is a point $x_0$ such that $u_m(x_0)=0$, then
\begin{equation*}
0=K(\frac{x_0}\lambda)u_m^{\frac{n+2s}{n-2s}}(x_0)=(-\Delta)^s u_m(x_0)=P.V.\int_{\mathbb{R}^n}\frac{u_m(x_0)-u_m(y)}{|x_0-y|^{n+2s}}dy= P.V.\int_{\mathbb{R}^n}\frac{-u_m(y)}{|x_0-y|^{n+2s}}dy.
\end{equation*}
Then $u_m\equiv 0$ which is impossible. Hence $u_m>0$.

According to Proposition \ref{prop1}, $u_m=W_m+\phi_m\in C^{0,\alpha}(\mathbb{R}^n)\cap \dot{H}^s(\mathbb{R}^n)$. Using local Schauder estimate \cite[Proposition 2.11]{JinTianling2011} and a bootstrap argument, we know $u_m\in C^{2,\alpha^\prime}_{loc}(\mathbb{R}^n)$, for some $\alpha^\prime\in(0,1)$.

What is more, $|u_m|_{L^\infty}(\mathbb{R}^n)\leq C$ with $C$ independent of $m$. In fact, Choosing $x\in \Omega_1$ with no loss of generality, we have
\begin{equation*}
W_m(x)\leq \sum_{i=1}^{(m+1)^k}\frac C{(1+|x-X^i|)^{n-2s}}\leq C+\sum_{i\not=1}^{(m+1)^k}\frac C{|X^i-X^1|^{n-2s}}\leq C+\frac C{(\lambda l)^{n-2s}}\leq C,
\end{equation*}
and
\begin{equation*}
|\phi_m|\leq \|\phi_m\|_*\sum_{h=1}^{(m+1)^k}\frac1{(1+|x-X^h|)^{\frac{n-2s}2+\tau}}\leq \|\phi_m\|_*\sum_{h=1}^\infty\frac1{(1+|x-X^h|)^{\frac{n-2s}2+\tau}}\leq C\|\phi_m\|_*\leq C.
\end{equation*}
Since $\phi_m$ satisfies the equation
$(-\Delta)^s\phi_m-\frac{n+2s}{n-2s}K(\frac x\lambda)W_m^{\frac{4s}{n-2s}}\phi_m=N(\phi_m)+l_m$, then from Lemma \ref{lm12}, Lemma \ref{lm7} and Proposition \ref{prop1}, we know that for any $x,y\in \mathbb{R}^n$ with $x\not=y$, there holds
\begin{equation*}
\frac{|\phi_m(x)-\phi_m(y)|}{|x-y|^\alpha}\leq \frac{C}{\lambda^{\tau-s}}\max\{\|\phi_m\|_*,\|N(\phi_m)\|_{**}+\|l_m\|_{**}\}\leq C,\;\;\text{where}\;\;\alpha=\min\{1,2s\}.
\end{equation*}
Also from simple calculation, we get for any $x\in \mathbb{R}^n$ and $R>0$, $\|W_m\|_{C^{0,\alpha}(B_{2R}(x))}\leq C(n,R)$, where $C(n,R)$ is a constant  independent of $m$. Hence $\|u_m\|_{C^{0,\alpha}(B_{2R}(x))}\leq C(n,R)$. Local Schauder estimate and a bootstrip argument yields that $\|u_m\|_{C^{2,\alpha^\prime}(B_R(x))}\leq C(n,R)$. Thanks to Azell\`{a}-Ascolli theorem, we have $u_m$ convergent uniformly to a $C^{2,\alpha^\prime}_{loc}$ function $u_\infty=W_\infty+\phi_\infty$ on compact sets as $m\to \infty$. We know $u_\infty$ satisfies $|u_\infty|_{L^\infty(\mathbb{R}^n)}\leq C$ and $\|u_\infty\|_{C^{2,\alpha^\prime}(B_R(x))}\leq C(n,R)$.

We will show that $u_\infty$ satisfies the equation \eqref{2}. Let $v_m=u_m-u_\infty$. From above, we know $v_m$ has the property $|v_m|_{L^\infty(\mathbb{R}^n)}\leq C$; $|v_m|_{C^2(B_1(x))}\leq C$ and $v_m\to 0$ uniformly on compact sets. From the definition of $(-\Delta)^s$, we have for any $x\in \mathbb{R}^n$
\begin{eqnarray*}
&&C(n,s)^{-1}|(-\Delta)^s v_m(x)| \\
&\leq &P.V\int_{\mathbb{R}^n}\frac{|v_m(x)-v_m(y)|}{|x-y|^{n+2s}}dy \\
&=&P.V.\int_{B_{\varepsilon_0}(x)}\frac{|v_m(x)-v_m(y)|}{|x-y|^{n+2s}}dy+\int_{B_R(x)\char92 B_{\varepsilon_0}(x)}\frac{|v_m(x)-v_m(y)|}{ |x-y|^{n+2s}}dy \\
 &&+\int_{\mathbb{R}^n\char92 B_{R}(x)}\frac{|v_m(x)-v_m(y)|}{|x-y|^{n+2s}}dy \\
&=:& T_1+T_2+T_3.
\end{eqnarray*}
For the term $T_1$, we have
\begin{eqnarray*}
T_1&=& \frac12P.V.\int_{B_{\varepsilon_0}(0)}\frac{|v_m(x+y)+v_m(x-y)-2v_m(x)|}{|y|^{n+2s}} \\
&\leq& C|v_m|_{C^2{B_1(x)}} \int_{B_{\varepsilon_0}(0)}|y|^{2-2s-n}dy\leq C|v_m|_{C^2{B_1(x)}}\varepsilon_0^{2-2s}\to 0 \text{ as } \varepsilon_0\to 0.
\end{eqnarray*}
For the third term,
\begin{equation*}
T_3\leq C\int_{\mathbb{R}^n\char92 B_R(x)}\frac1{|x-y|^{n+2s}}=CR^{-2s}\to 0 \text{ as } R\to \infty.
\end{equation*}
Then we estimate the term $T_2$. For fixed $R$ large enough and $\varepsilon_0$ small enough, $B_R(x)\char92 B_{\varepsilon_0}(x)$ is a compact set. So we have $T_2\to 0$ as $m\to\infty$. Hence $(-\Delta)^s u_m(x) \to (-\Delta)^s u_\infty(x)$ as $m\to\infty$. Therefore $u_\infty$ satisfies equation \eqref{2}.

\hfill{\usefont{U}{msa}{m}{n}\char"03}

\noindent\textit{Proof of Corollary  \ref{coro1}.}
Fix the constant $m<\infty$. Using a similar argument as in \cite{WeiYan2010}, we can expand $I(W_m+\phi_m)$ as
\[I(W_m+\phi_m)=(m+1)^k\left(\frac sn\int U_{0,1}^{\frac{2n}{n-2s}}+o(1)\right),\text{ as }l\to\infty.\]
For each $m<\infty$, $I(W_m+\phi_m)\to (m+1)^k\frac sn\int U_{0,1}^{\frac{2n}{n-2s}}$ as $l\to\infty$. For any $m_1,m_2\in \mathbb{N}_+$ such that $m_1\not=m_2$, we can find two solutions $W_{m_1}+\phi_{m_1}$ and $W_{m_2}+\phi_{m_2}$ of \eqref{2}, such that $I(W_{m_1}+\phi_{m_1})\not=I(W_{m_2}+\phi_{m_2})$. Hence we can find infinitely many solutions of \eqref{2}.

\hfill{\usefont{U}{msa}{m}{n}\char"03}

\appendix
\section{Basic Estimates}\label{sect4}
\begin{lemma}\label{lm4}
(cf. \cite{LiYY2016,WeiYan2010}) For any $x_i,x_j,y\in \mathbb{R}^n$ and constant $\sigma\in[0,\min\{\alpha,\beta\}]$, we have
\[\frac1{(1+|y-x_i|)^\alpha(1+|y-x_j|)^\beta}\leq \frac {2^\sigma}{(1+|x_i-x_j|)^{\sigma}}\left(\frac1{(1+|y-x_i|)^{\alpha+\beta-\sigma}}+ \frac1{(1+|y-x_j|)^{\alpha+\beta-\sigma}}\right).\]
\end{lemma}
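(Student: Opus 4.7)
The lemma is a purely elementary triangle-inequality estimate, and the plan is to reduce it to two one-line calculations: a lower bound on $2^{\sigma}/(1+|x_i-x_j|)^{\sigma}$ coming from the triangle inequality, and a monotonicity comparison between the two factors $(1+|y-x_i|)$ and $(1+|y-x_j|)$.

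First, I would apply the triangle inequality in the form
\[
1+|x_i-x_j| \;\le\; 1+|y-x_i|+|y-x_j| \;\le\; (1+|y-x_i|)+(1+|y-x_j|) \;\le\; 2\max\bigl\{1+|y-x_i|,\;1+|y-x_j|\bigr\}.
\]
Raising to the $\sigma$-th power (legal since $\sigma\ge 0$) and inverting gives
\[
\frac{2^{\sigma}}{(1+|x_i-x_j|)^{\sigma}} \;\ge\; \frac{1}{\max\bigl\{1+|y-x_i|,\;1+|y-x_j|\bigr\}^{\sigma}}.
\]
This is the only place where the assumption $\sigma\ge 0$ is used.

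Second, by symmetry in $(i,j)$ I may assume without loss of generality that $1+|y-x_i|\ge 1+|y-x_j|$, so that the maximum above is $1+|y-x_i|$. Dropping the second summand on the right of the claimed inequality, it then suffices to prove
\[
\frac{1}{(1+|y-x_i|)^{\alpha}(1+|y-x_j|)^{\beta}} \;\le\; \frac{1}{(1+|y-x_i|)^{\sigma}(1+|y-x_j|)^{\alpha+\beta-\sigma}},
\]
which after clearing denominators reduces to
\[
(1+|y-x_j|)^{\alpha-\sigma} \;\le\; (1+|y-x_i|)^{\alpha-\sigma}.
\]
The hypothesis $\sigma\le\min\{\alpha,\beta\}$ guarantees $\alpha-\sigma\ge 0$, and the case assumption $1+|y-x_i|\ge 1+|y-x_j|$ then makes this trivial. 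In the opposite case $1+|y-x_j|\ge 1+|y-x_i|$ one keeps the other summand on the right and uses $\sigma\le\beta$ in the identical way.

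There is really no obstacle here; the only point that requires a little care is to notice that the condition $\sigma\le\min\{\alpha,\beta\}$ is exactly what is needed to pair the dropped summand with the correct factor in the monotonicity step, which is why one must split into the two cases according to which of $|y-x_i|$, $|y-x_j|$ is larger rather than trying to symmetrize. Since the lemma is cited from \cite{LiYY2016, WeiYan2010}, I would expect the authors simply to quote it without proof.
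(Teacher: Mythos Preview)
Your proof is correct, and your expectation is exactly right: the paper states this lemma with a citation to \cite{LiYY2016,WeiYan2010} and gives no proof at all, so there is nothing to compare against. One trivial slip: when you assume $1+|y-x_i|\ge 1+|y-x_j|$ you actually \emph{keep} the second summand (the one with $(1+|y-x_j|)^{\alpha+\beta-\sigma}$) and drop the first, not the other way around; the displayed inequality you wrote is the right one, only the phrase ``dropping the second summand'' should read ``dropping the first summand.''
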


\begin{lemma}\label{lm10}
For any $\sigma>0$  with $\sigma\not=n-2s$, there is a constant $C>0$ such that
\[\int_{\mathbb{R}^n}\frac{1}{|y-z|^{n-2s}}\frac{1}{(1+|z|)^{2s+\sigma}}dz\leq\frac{C}{(1+|y| )^{\min(\sigma,n-2s)}} .\]
For $\sigma=n-2s$, there is also a constant $C>0$, such that
\[\int_{\mathbb{R}^n}\frac1{|y-z|^{n-2s}}\frac1{(1+|z|)^n}\leq C\frac{\max(1,\log |y|)}{(1+|y|)^{n-2s}}\]
\end{lemma}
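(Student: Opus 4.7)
The plan is a standard convolution estimate, which I would carry out by splitting $\mathbb{R}^n$ into regions according to the relative sizes of $|y|$, $|z|$ and $|z-y|$. For $|y|\leq 2$ the result is trivial: the singularity $|y-z|^{-(n-2s)}$ is locally integrable and $(1+|z|)^{-(2s+\sigma)}$ is integrable at infinity (or at worst the integral near infinity is dominated by a region of bounded measure), so the whole integral is bounded by a constant, matching $C(1+|y|)^{-\min(\sigma,n-2s)}$. Hence I would only need to treat $|y|\geq 2$.

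For $|y|\geq 2$ I would decompose the integration domain into four pieces:
\[
A=\{|z-y|\leq |y|/2\},\quad B=\{|z|\leq |y|/2\},\quad C=\{|y|/2\leq|z|\leq 2|y|\}\setminus A,\quad D=\{|z|\geq 2|y|\}.
\]
On $A$, we have $|z|\sim |y|$, so the second factor is $\leq C|y|^{-(2s+\sigma)}$ and the integral reduces to $C|y|^{-(2s+\sigma)}\int_0^{|y|/2}r^{2s-1}\,dr=C|y|^{-\sigma}$. On $C$, both factors are of order $|y|^{-(n-2s)}$ and $|y|^{-(2s+\sigma)}$ respectively, and the region has measure $\sim|y|^n$, giving $C|y|^{-\sigma}$. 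On $D$, $|y-z|\sim |z|$, so the integrand is $\sim|z|^{-(n+\sigma)}$, integrating to $C|y|^{-\sigma}$.

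The one region where the three cases of the lemma differ is $B$, where $|y-z|\geq |y|/2$ bounds the kernel by $C|y|^{-(n-2s)}$ and we must estimate $\int_{|z|\leq|y|/2}(1+|z|)^{-(2s+\sigma)}\,dz$. If $\sigma>n-2s$ the integral converges on all of $\mathbb{R}^n$, yielding $C|y|^{-(n-2s)}$; if $\sigma<n-2s$ it is $\sim|y|^{n-2s-\sigma}$, yielding $C|y|^{-\sigma}$; and in the borderline case $\sigma=n-2s$ it is $\sim\log|y|$, which accounts for the logarithmic factor. Combining the four contributions and taking the worst (smallest) decay exponent gives exactly $C(1+|y|)^{-\min(\sigma,n-2s)}$ in the non-critical cases and $C(1+|y|)^{-(n-2s)}\max(1,\log|y|)$ when $\sigma=n-2s$.

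There is no real obstacle here; the only point that requires care is the threshold $2s+\sigma=n$ inside region $B$, where one must track the logarithm and verify that no other region produces a term that beats it. Once the four contributions are added and the dominant one is identified, the estimate follows.
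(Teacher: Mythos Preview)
Your argument is correct and is exactly the standard decomposition that the paper has in mind: the paper gives no independent proof but simply refers to \cite[Lemma~A.2]{LiYY2016} and \cite[Lemma~B.2]{WeiYan2010}, whose proofs proceed by the same near/far splitting you describe. Nothing needs to be added.
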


\begin{proof}
The proof follows from the same argument as \cite[Lemma A.2]{LiYY2016}. See also  \cite[Lemma B.2]{WeiYan2010}.

\end{proof}
Recall that $X^i\in X_{i,m}=\{X^i\}_{i=1}^{(m+1)^k}$, $B_i=B_{\lambda l}(X^i)$ and $B_{i,m}=B_{\max\{\frac m4,1\}\lambda l}(X^i)$.
\begin{lemma}\label{lm9}
(cf. \cite{LiYY2016}) For any $\theta>k$, there exists a constant $C(\theta,k,n)>1$ independent of $m$, such that if $y\in B_i\cap \Omega_i$, there holds
\begin{equation*}
\frac1{(1+|y-X^i|)^\theta}\leq \sum_j\frac1{(1+|y-X^j|)^\theta}\leq \frac C{(1+|y-X^i|)^\theta}.
\end{equation*}
If $y\in B_i^c\cap B_{i,m}\cap \Omega_i$, there holds
\begin{equation}\label{19}
\frac 1 {C(1+|y-X^i|)^{\theta-k}(\lambda l)^k}\leq \sum_j\frac1{(1+|y-X^j|)^\theta}\leq \frac C {(1+|y-X^i|)^{\theta-k}(\lambda l)^k}.
\end{equation}
and if $y\in B_{i,m}^c\cap \Omega_i$, there holds
\begin{equation*}
\frac {m^k}{C(1+|y-X^i|)^\theta}\leq \sum_j\frac1{(1+|y-X^j|)^\theta} \leq\frac {Cm^k}{(1+|y-X^i|)^{ \theta}}\leq \frac C {(1+|y-X^i|)^{\theta-k}(\lambda l)^k}.
\end{equation*}
\end{lemma}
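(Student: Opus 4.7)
The plan is to exploit two facts: (i) the Voronoi condition $y\in\Omega_i$ forces $|y-X^j|\geq|y-X^i|$ for every $j$, which immediately supplies the lower bounds via the $j=i$ term in Cases 1 and 3 (and restricts the dyadic decomposition below); and (ii) $\{X^j\}$ sits on a rescaled integer lattice of spacing $\lambda l$ in $k$ directions, so for any ball $B_R(z)\subset\mathbb{R}^n$ the number of $X^j$ in $B_R(z)$ is at most $C(1+R/(\lambda l))^k$ and is at least $c(R/(\lambda l))^k$ whenever the ball is not essentially off the lattice. The hypothesis $\theta>k$ will ensure that every lattice tail sum and every dyadic geometric series that appears actually converges.

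For Case 1 ($y\in B_i\cap\Omega_i$) the lower bound is just the $j=i$ term. For the upper bound I would combine the Voronoi property with $|y-X^i|\leq\lambda l$ to get $|y-X^j|\geq\tfrac12|X^j-X^i|$ for $j\neq i$, thereby reducing the tail to the lattice sum $\sum_{z\in\mathbb{Z}^k\setminus\{0\}}(1+\lambda l|z|)^{-\theta}$. Since $\theta>k$ this converges and is bounded by $C(\lambda l)^{-\theta}\leq C(1+|y-X^i|)^{-\theta}$, matching the $j=i$ contribution.

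For Case 2 ($y\in B_i^c\cap B_{i,m}\cap\Omega_i$) set $R_0:=|y-X^i|$, so $\lambda l\leq R_0\leq r_0$. Let $S_t$ denote the partial sum over those $j$ with $2^tR_0\leq|y-X^j|<2^{t+1}R_0$; the Voronoi property ensures $\sum_j(1+|y-X^j|)^{-\theta}=\sum_{t\geq 0}S_t$. The lattice-point count in $B_{2^{t+1}R_0}(y)$ gives $S_t\leq C\,(2^tR_0/(\lambda l))^k(1+2^tR_0)^{-\theta}$, and summing the resulting geometric series (convergent because $\theta>k$) yields $\sum_t S_t\leq C(\lambda l)^{-k}(1+R_0)^{k-\theta}$. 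The matching lower bound follows because $B_{2R_0}(y)\subset B_{3R_0}(X^i)$ contains at least $c(R_0/(\lambda l))^k$ lattice points, each contributing at least $C(1+R_0)^{-\theta}$ to the sum.

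For Case 3 ($y\in B_{i,m}^c\cap\Omega_i$) one has $R_0:=|y-X^i|>r_0\geq m\lambda l/4$, hence $R_0\leq|y-X^j|\leq R_0+m\lambda l\leq 5R_0$ for every $j$, so all $(m+1)^k$ terms are comparable to $(1+R_0)^{-\theta}$; the final inequality $m^k(1+R_0)^{-\theta}\leq C(\lambda l)^{-k}(1+R_0)^{k-\theta}$ reduces to $m\lambda l\leq CR_0$, which is immediate from the definition of $r_0$. The main obstacle will be the lower bound in Case 2: one has to verify that a fixed constant fraction of the lattice ball $B_{2R_0}(X^i)\cap X_{l,m}$ actually sits inside the finite lattice, uniformly in $m$ and in the position of $X^i\in Q_m$. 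The choice $r_0=\max\{m/4,1\}\lambda l$ is tailored exactly for this, since it prevents $R_0$ from reaching the diameter of the lattice and hence guarantees enough lattice points around $y$ regardless of which corner of $Q_m$ holds $X^i$.
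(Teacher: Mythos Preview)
Your argument is correct. The paper does not actually supply a proof of this lemma; it simply cites \cite{LiYY2016}. The one place in the paper where ``the proof of Lemma~\ref{lm9}'' is invoked (Case~3 in the proof of Lemma~\ref{lm7}) indicates that the cited argument proceeds by comparing the lattice sum to an integral,
\[
\sum_h\frac1{(1+|x-X^h|)^\theta}\;\asymp\;\frac{1}{(1+|x-X^1|)^\theta}\int_{[0,M]^k}\frac{dz}{\bigl(1+\tfrac{\lambda l}{1+|x-X^1|}|z|\bigr)^\theta},
\]
and then reading off the three regimes from the size of $\tfrac{\lambda l}{1+|x-X^1|}$. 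Your dyadic-shell decomposition is an equivalent and equally standard route: both rest on the same lattice-point count $\#\{X^j:|X^j-y|\leq R\}\asymp (R/\lambda l)^k$ and the convergence condition $\theta>k$. The integral approach is a little slicker for the two-sided bound in Case~2, while your approach makes the geometry (Voronoi property, ball containments) more explicit.

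Two small points of phrasing. In Case~2 the inclusion you need for the lower bound is $B_{R_0}(X^i)\cap\{X^j\}\subset B_{2R_0}(y)$, not $B_{2R_0}(y)\subset B_{3R_0}(X^i)$; the latter goes the wrong way for a lower count. In Case~3 the bound $|y-X^j|\leq 5R_0$ should read $|y-X^j|\leq (1+4\sqrt{k})R_0$, since the diameter of the lattice is $m\sqrt{k}\,\lambda l$; this only affects the constant, which is allowed to depend on~$k$.
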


\begin{lemma}\label{lm13}
 Let $n>2s+2$ and $0<\tau<\frac{n+2s}2$. If  $\phi$ satisfies $\|\phi\|_*\leq \frac C{\lambda^{\frac{n+2s}2-\tau}}$, then for any $c>0$, there exists $\lambda_0$ such that for any $\lambda>\lambda_0$, there holds $|\phi|\leq cW_m$ in $\cup_h(\Omega_h\cap B_h)$.
\end{lemma}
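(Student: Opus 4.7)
\textit{Proof plan.} Fix $y\in \Omega_h\cap B_h$ for some $h\in\{1,\dots,(m+1)^k\}$. The goal is to show $|\phi(y)|/W_m(y)\leq C\lambda^{-\delta}$ for some $\delta>0$ independent of $y$ and $h$; once this is done, choosing $\lambda_0$ so large that $C\lambda_0^{-\delta}<c$ finishes the argument. First I would combine the definition of $\|\cdot\|_*$ with Lemma \ref{lm9}: since $\tau>k$, for $y\in\Omega_h\cap B_h$ the lemma collapses the sum to its leading term, giving
\[|\phi(y)|\leq \frac{C\|\phi\|_*\,\gamma(y)}{(1+|y-X^h|)^{\frac{n-2s}{2}+\tau}}.\]
On the other hand, using $\Lambda_h\in[C_1,C_2]$ (from \eqref{63}) and $|P^h-X^h|\leq \tfrac12$, one has the immediate lower bound $W_m(y)\geq U_{P^h,\Lambda_h}(y)\geq c(1+|y-X^h|)^{-(n-2s)}$. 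Hence
\[\frac{|\phi(y)|}{W_m(y)}\leq C\|\phi\|_*\,\gamma(y)\,(1+|y-X^h|)^{\frac{n-2s}{2}-\tau}.\]

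Next I would split according to the truncation defining $\gamma$. In \textbf{Case A}, $1+|y-X^h|\leq \lambda$, so $\gamma(y)=((1+|y-X^h|)/\lambda)^{\tau-s}$ and the ratio is bounded by $C\|\phi\|_*\lambda^{s-\tau}(1+|y-X^h|)^{\frac{n-4s}{2}}$. Taking the worst case over $|y-X^h|\leq \lambda$ and inserting the hypothesis $\|\phi\|_*\leq C\lambda^{\tau-(n+2s)/2}$, the quantity is at most $C\lambda^{-2s}$ when $n\geq 4s$, and at most $C\lambda^{-n/2}$ otherwise. In \textbf{Case B}, $1+|y-X^h|>\lambda$, so $\gamma(y)=1$; since $y\in B_h$ means $|y-X^h|\leq \lambda l$, the ratio is bounded by
\[C\|\phi\|_*(\lambda l)^{\frac{n-2s}{2}-\tau}\leq C\, l^{\frac{n-2s}{2}-\tau}\,\lambda^{-2s}.\]
Substituting the scaling $l=\lambda^{(\beta-n+2s)/(n-2s)}$ gives $C\lambda^{\sigma}$ with
\[\sigma=\frac{(\beta-n+2s)\bigl(\tfrac{n-2s}{2}-\tau\bigr)}{n-2s}-2s.\]

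The main obstacle is verifying $\sigma<0$ in Case B, where the large-scale factor $(\lambda l)^{(n-2s)/2-\tau}$ has to be compensated by $\|\phi\|_*$ through the scaling relation tying $l$ to $\lambda$. Using the two bounds $\beta-n+2s<2s$ (which comes from $\beta<n$ in $(H_3)$) and $(n-2s)/2-\tau>0$ (which comes from $\tau<(n-2s)/2$), I estimate
\[\frac{(\beta-n+2s)\bigl(\tfrac{n-2s}{2}-\tau\bigr)}{n-2s}<\frac{2s\cdot (n-2s)/2}{n-2s}=s,\]
so $\sigma<-s<0$. Both halves of the constraint $\beta\in(n-2s,n)$ in $(H_3)$ are thus essential: the upper bound $\beta<n$ is used here, while the lower bound $\beta>n-2s$ is what makes $l$ a positive power of $\lambda$ in the first place. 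Combining Cases A and B, $|\phi|/W_m=O(\lambda^{-s})$ uniformly on $\bigcup_h(\Omega_h\cap B_h)$, and the conclusion follows by taking $\lambda_0$ large enough that this uniform bound is less than $c$.
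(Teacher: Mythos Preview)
Your argument is correct and takes a direct quantitative route, whereas the paper argues by contradiction: assuming $|\phi(y)|\geq c_0 W_m(y)$ at some $y$, the paper bounds $W_m(y)$ below by a constant multiple of the weight $\gamma(y)\sum_h(1+|y-X^h|)^{-(n-2s)/2-\tau}$ divided by $(\lambda l)^{(n-2s)/2-\tau}$, and then reads off a lower bound on $\|\phi\|_*$ contradicting the hypothesis. Your direct estimate is essentially the same comparison written the other way around, but with the bonus that you extract an explicit decay rate $|\phi|/W_m=O(\lambda^{-s})$.

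One point to flag: the lemma as stated allows $0<\tau<\frac{n+2s}{2}$, and the paper's proof splits into the cases $\tau<\frac{n-2s}{2}$ and $\tau\geq\frac{n-2s}{2}$. Your Case~B invokes $(n-2s)/2-\tau>0$, so as written it only covers the first range (which is indeed the range $\tau\in(k,\frac{n-2s}{2})$ actually used in the paper). The second range is in fact easier with your method: when $\tau\geq\frac{n-2s}{2}$ the factor $(1+|y-X^h|)^{(n-2s)/2-\tau}$ is bounded by $\lambda^{(n-2s)/2-\tau}$ in Case~B (since $1+|y-X^h|>\lambda$), giving directly $|\phi|/W_m\leq C\|\phi\|_*\lambda^{(n-2s)/2-\tau}\leq C\lambda^{-2s}$, so no appeal to the $\beta$--scaling is needed there. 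Adding this one line makes your proof cover the full stated range.
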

\begin{proof}
We prove this lemma indirectly. Suppose that there exists $c_0>0$, such that for any $\lambda_0>0$, there is a $\lambda>\lambda_0$ and $y\in \cup_l(\Omega_l\cap B_l)$ such that $|\phi(y)|\geq c_0 W_m(y)$. Then
\begin{eqnarray*}
|\phi(y)|&\geq& C\sum_{h=1}^{(m+1)^k}\frac1{(1+|y-X^h|)^{n-2s}} \\
&\geq& C\gamma(y)\sum_{h=1}^{(m+1)^k} \frac1{(1+|y-X^h|)^{ \frac{n-2s}2+\tau}}\frac1{(\lambda l)^{\frac{n-2s}2-\tau}}
\end{eqnarray*}
If $0<\tau<\frac{n-2s}2$, we have
\begin{equation*}
\frac1{\lambda^{\frac{n+2s}2-\tau}}\geq\|\phi\|_*\geq\frac C{(\lambda l)^{\frac{n-2s}2-\tau}},
\end{equation*}
which does not hold for $\lambda$ large enough.

If $\frac{n-2s}2\leq\tau<\frac{n+2s}2$, we can also get
\begin{equation*}
\frac1{\lambda^{\frac{n+2s}2-\tau}}\geq\|\phi\|_*\geq C,
\end{equation*}
which also is a contradiction for $\lambda$ large.

\end{proof}

\begin{lemma}\label{lm11}
For $n>2s+2$ and $1\leq k<\tau<\frac{n-2s}2$, we have
\begin{eqnarray*}
&&\int_{\mathbb{R}^n}\frac1{|x-y|^{n-2s}}W_{P, \Lambda}^{\frac{4s}{n-2s}}(y)\gamma(y)\sum_h\frac1{( 1+|y-X^h|)^{\frac{n-2s}2+\tau}}dy \\
&\leq&C\left(\gamma(x)\sum_h\frac1{( 1+|x-X^h|)^{\frac{n-2s}2+\tau+\theta}}+\frac1{(\lambda l)^{\frac{4s}{n-2s}k}}\gamma(x)\sum_h\frac1{( 1+|x-X^h|)^{\frac{n-2s}2+\tau}}\right),
\end{eqnarray*}
where $\theta>0$ is a small constant and $C>0$ does not depend on $m$.
\end{lemma}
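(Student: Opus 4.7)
The plan is to decompose the integration domain $\mathbb{R}^n = \bigcup_i \Omega_i$ and, inside each $\Omega_i$, split further into the near zone $\Omega_i \cap B_i$ and the outer zone $\Omega_i \cap B_i^c$ (which collects $\Omega_i \cap B_i^c \cap B_{i,m}$ and $\Omega_i \cap B_{i,m}^c$ by means of the uniform estimate in Lemma~\ref{lm9}). Using the bound $W_{P,\Lambda}(y) \leq C\sum_j (1+|y-X^j|)^{-(n-2s)}$ and Lemma~\ref{lm9}, in the near zone one has $W_{P,\Lambda}^{\frac{4s}{n-2s}}(y) \leq C(1+|y-X^i|)^{-4s}$, while in the outer zone one has $W_{P,\Lambda}^{\frac{4s}{n-2s}}(y) \leq C(\lambda l)^{-\frac{4s}{n-2s}k}(1+|y-X^i|)^{-\frac{4s(n-2s-k)}{n-2s}}$. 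The prefactor $(\lambda l)^{-\frac{4s}{n-2s}k}$ that appears in the outer zone is exactly the factor multiplying the second term on the right-hand side of the claim.

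Inside each zone I plan to treat $\gamma(y)$ by its two defining branches: $\gamma(y) = ((1+|y-X^i|)/\lambda)^{\tau-s}$ on $\Omega_i \cap B_\lambda(X^i)$ and $\gamma(y) = 1$ otherwise. In the first branch the extra $(1+|y-X^i|)^{\tau-s}$ is absorbed into the power decay from $W^{\frac{4s}{n-2s}}$, which is possible because $\tau < \frac{n-2s}{2}$ (so no admissible exponent is exhausted). Then Lemma~\ref{lm4} decouples, for each $h$, the product
\[
\frac{1}{(1+|y-X^i|)^{\alpha_i}} \cdot \frac{1}{(1+|y-X^h|)^{\frac{n-2s}{2}+\tau}}
\]
into two one-center weights paid for by $(1+|X^i-X^h|)^{-\sigma}$, where $\sigma$ is chosen just above $k$ (guaranteed by the hypothesis $k < \tau$). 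The integration against $|x-y|^{-(n-2s)}$ is then handled term-by-term by Lemma~\ref{lm10}, turning the result into single-center power weights in $x$.

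Reassembling, I will sum over $h$ and $i$. Convergence of $\sum_{h\neq i}(1+|X^i-X^h|)^{-\sigma}$ follows from $\sigma > k$, and the spare room $\sigma < \frac{n-2s}{2}+\tau$ produces the extra exponent $\theta > 0$ in the first term of the right-hand side. The factor $\gamma(x)$ on the right-hand side is recovered by mirroring the $\gamma(y)$ case split on the $x$ side: when $x$ lies within distance $\lambda$ of some $X^h$, the power $(1+|x-X^h|/\lambda)^{\tau-s}$ emerges from Lemma~\ref{lm10} paired with the $1/\lambda^{\tau-s}$ factor extracted from $\gamma(y)$ near $X^i$, exactly as in the computation \eqref{37}--\eqref{34} performed earlier for Lemma~\ref{lm6}. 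Far from all lattice points $\gamma(x) = 1$ and no rescaling is needed.

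The main obstacle will be the bookkeeping of the outer zone: the decay exponent $\frac{4s(n-2s-k)}{n-2s}$ on $W^{\frac{4s}{n-2s}}$ is weaker than $4s$, which leaves less margin for the Lemma~\ref{lm4} splitting and for Lemma~\ref{lm10} to give the claimed decay $(1+|x-X^h|)^{-(\frac{n-2s}{2}+\tau)}$. The compensation is the prefactor $(\lambda l)^{-\frac{4s}{n-2s}k}$, which exactly matches the second term on the right-hand side, so the zone is handled by producing a bound without the extra $\theta$ exponent but with the $(\lambda l)^{-\frac{4s}{n-2s}k}$ gain. Verifying, in particular, that no term spills over the near-zone budget and that the lattice summation converges uniformly in $m$ are the two bookkeeping points that demand the constraints $1 \leq k < \tau < \frac{n-2s}{2}$ and $n > 2s+2$.
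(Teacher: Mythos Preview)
Your proposal is correct and follows essentially the same route as the paper: the same decomposition of $\mathbb{R}^n$ into near zones $\Omega_i\cap B_i$ and outer zones $\Omega_i\cap B_i^c$, the same use of Lemma~\ref{lm9} to control $W_m^{\frac{4s}{n-2s}}$ on each zone (producing the $(\lambda l)^{-\frac{4s}{n-2s}k}$ prefactor outside), Lemma~\ref{lm10} for the convolution with $|x-y|^{-(n-2s)}$, and the two parallel estimates (one with $\gamma\le 1$, one with $\gamma(y)\le \lambda^{-(\tau-s)}(1+|y-X^i|)^{\tau-s}$) whose minimum recovers $\gamma(x)$.

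The one difference is that your detour through Lemma~\ref{lm4} to decouple the cross terms $\frac{1}{(1+|y-X^i|)^{\alpha_i}}\cdot\frac{1}{(1+|y-X^h|)^{\frac{n-2s}{2}+\tau}}$ is unnecessary. On $\Omega_i\cap B_i$ (and likewise on the outer pieces), Lemma~\ref{lm9} already collapses the \emph{entire} sum $\sum_h(1+|y-X^h|)^{-(\frac{n-2s}{2}+\tau)}$ to a constant multiple of the single term $(1+|y-X^i|)^{-(\frac{n-2s}{2}+\tau)}$, so the integrand becomes a one-center weight immediately and Lemma~\ref{lm10} applies without any lattice-sum bookkeeping. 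The paper exploits this, writing for instance $T_1\le C\int \frac{1}{|x-y|^{n-2s}}\sum_h(1+|y-X^h|)^{-(4s+\frac{n-2s}{2}+\tau)}\,dy$ directly. Your route through Lemma~\ref{lm4} still works, but it creates a double sum over $i,h$ that you then have to re-collapse; dropping it shortens the argument and makes the choice $\theta=\min\{2s,\frac{n-4s}{2},\frac{n-2s}{2}-\tau\}$ transparent.
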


\begin{proof}
Without loss of generality, we assume $x\in \Omega_1$. We write
\begin{eqnarray*}
&&\int_{\mathbb{R}^n}\frac1{|x-y|^{n-2s}}W_m^{\frac{4s}{n-2s}}(y)\gamma(y)\sum_h\frac1{( 1+|y-X^h|)^{\frac{n-2s}2+\tau}}dy \\
&=&\left(\int_{\cup_h(\Omega_h\cap B_h)}+\int_{\cup_h(\Omega_h\cap B_h^c \cap B_{h,m})}+\int_{\cup_h(\Omega_h\cap B_{h,m}^c)}\right)\frac1{|x-y|^{n-2s}} W_m^{\frac{4s}{n-2s}}(y) \\
&&\;\;\;\;\;\;\times\gamma(y)\sum_h\frac1{( 1+|y-X^h|)^{\frac{n-2s}2+\tau}}dy \\
&=:&T_1+T_2+T_3.
\end{eqnarray*}
We now estimate each term $T_i(\;\;i=1,2,3)$.

Using Lemma \ref{lm10} and Lemma \ref{lm9}, we have
\begin{eqnarray}\label{61}
\nonumber T_1&\leq& C\int_{\cup_h(\Omega_h\cap B_h)}\frac1{|x-y|^{n-2s}}\sum_h\frac1{(1+|y-X^h|)^{4s+\frac{n-2s}2+\tau}}dy \\\nonumber
&\leq& C\sum_h\frac1{(1+|x-X^h|)^{\min\{\frac{n+2s}2+\tau,n-2s\}}} \\
&=& C\sum_h\frac1{(1+|x-X^h|)^{\frac{n-2s}2+\tau+\theta_1}},\hspace{1,cm}\text{where }\theta_1=\min\{2s,\frac{n-2s}2-\tau\}.
\end{eqnarray}
Similarly, we also obtain
\begin{eqnarray}\label{62}
\nonumber T_1&\leq& C\int_{\cup_h(\Omega_h\cap B_h)}\frac1{|x-y|^{n-2s}}\frac1{\lambda^{\tau-s}}\sum_h\frac1{(1+|y-X^h| )^{\frac n2+4s}}dy \\
\nonumber&\leq& \frac C{\lambda^{\tau-s}}\sum_h\frac1{(1+|x-X^h|)^{\min\{\frac n2+2s,n-2s\}}} \\ \nonumber
&\leq& C\frac{(1+|x-X^1|)^{\tau-s}}{\lambda^{\tau-s}}\sum_h\frac1{(1+|x-X^h|)^{\min\{n-3s+\tau,
\frac n2+\tau+s\}}} \\
&=& C\frac{(1+|x-X^1|)^{\tau-s}}{\lambda^{\tau-s}}\sum_h\frac1{(1+|x-X^h|)^{\frac{n-2s}2+\tau+\theta_2}},
\end{eqnarray}
where $\theta_2=\min\{\frac{n-4s}2,2s\}$.

Combining the estimate \eqref{61} and \eqref{62}, we have
\[T_1\leq C\gamma(x)\sum_h\frac1{(1+|x-X^h|)^{\frac{n-2s}2+\tau+\theta}},\]
where $\theta=\min\{2s,\frac{n-4s}2,\frac{n-2s}2-\tau\}$.

For the term $T_2$, we have
\begin{eqnarray*}
T_2&\leq& \frac C{(\lambda l)^{\frac{4s}{n-2s}k}}\int_{\mathbb{R}^n}\frac1{|x-y|^{n-2s}}\sum_h\frac1{(1+ |y-X^h|)^{\frac{n-2s}2+4s+\tau-\frac{4s}{n-2s}k}}dy \\
&\leq& \frac C{(\lambda l)^{\frac{4s}{n-2s}k}}\sum_h\frac1{(1+|x-X^h|)^{\min\{n-2s,\frac{n-2s}2+\tau+2s- \frac{4s}{n-2s}k\}}} \\
&\leq& \frac C{(\lambda l)^{\frac{4s}{n-2s}k}}\sum_h\frac1{(1+|x-X^h|)^{\frac{n-2s}2+\tau}},
\end{eqnarray*}
and
\begin{eqnarray*}
T_2&\leq&\frac C{\lambda^{\tau-s}(\lambda l)^{\frac{4s}{n-2s}k}}\int_{\mathbb{R}^n}\frac1{|x-y|^{n-2s}} \sum_h\frac1{(1+|y-X^h|)^{\frac n2+4s-\frac{4s}{n-2s}k}}dy \\
&\leq& \frac C{\lambda^{\tau-s}(\lambda l)^{\frac{4s}{n-2s}k}}\sum_h\frac1{(1+|x-X^h|)^{\min\{n-2s, \frac n2+2s-\frac{4s}{n-2s}k\}}} \\
&\leq& \frac C{(\lambda l)^{\frac{4s}{n-2s}k}}\frac{(1+|x-X^1|)^{\tau-s}}{\lambda^{\tau-s}}\sum_h\frac1{(1+ |x-X^h|)^{\frac{n-2s}2+\tau}}.
\end{eqnarray*}
Thus
\[T_2\leq \frac C{(\lambda l)^{\frac{4s}{n-2s}k}}\gamma(x)\sum_h\frac1{(1+ |x-X^h|)^{\frac{n-2s}2+\tau}}.\]
By the same procedure, we have
\[T_3\leq \frac C{(\lambda l)^{\frac{4s}{n-2s}k}}\gamma(x)\sum_h\frac1{(1+ |x-X^h|)^{\frac{n-2s}2+\tau}}.\]
Hence this lemma follows.

\end{proof}
Remember   $Z_{i,j}=\frac{\partial U_{P^i,\Lambda_i}}{\partial P^i_j}$ for $j=1,2,\dots,n$ and $Z_{i,n+1}=\frac{\partial U_{P^i,\Lambda_i}}{\partial \Lambda_i}$.

\begin{lemma}\label{lm5}
For $t=1,2\dots,n+1$, we have
\begin{equation*}
\left|\int_{\mathbb{R}^n}K(\frac x\lambda)W_m^{\frac{4s}{n-2s}}Z_{r,t}\phi\right|\leq \frac{C\|\phi\|_*}{\lambda^{\tau-s}(\lambda l)^{\frac n2}}.
\end{equation*}
\end{lemma}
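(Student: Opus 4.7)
The plan is to exploit the orthogonality $\int_{\mathbb{R}^n} U_{P^r,\Lambda_r}^{\frac{4s}{n-2s}}Z_{r,t}\phi\,dx = 0$, which $\phi$ satisfies as a solution of the constrained problem \eqref{17}. A direct pointwise estimate without orthogonality yields only a bound of order $C\|\phi\|_*\lambda^{s-\tau}$, which is too weak by the factor $(\lambda l)^{n/2}$, so the orthogonality must be used to cancel the dominant diagonal contribution concentrated near $X^r$.

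Using the orthogonality, I rewrite
\begin{equation*}
\int K(\tfrac{x}{\lambda})W_m^{\frac{4s}{n-2s}}Z_{r,t}\phi\,dx = \int\Bigl[K(\tfrac{x}{\lambda})W_m^{\frac{4s}{n-2s}} - U_{P^r,\Lambda_r}^{\frac{4s}{n-2s}}\Bigr]Z_{r,t}\phi\,dx,
\end{equation*}
and decompose the bracket as $(K(\tfrac{x}{\lambda})-1)U_{P^r,\Lambda_r}^{\frac{4s}{n-2s}} + K(\tfrac{x}{\lambda})(W_m^{\frac{4s}{n-2s}}-U_{P^r,\Lambda_r}^{\frac{4s}{n-2s}})$. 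For the first summand, $1$-periodicity of $K$ in the first $k$ coordinates combined with $K(0)=1$ and $X^r/\lambda\in\mathbb{Z}^k\times\{0\}$ yields $K(X^r/\lambda)=1$, and $(H_3)$ (reduced via periodicity to a neighborhood of $0$) then gives $|K(\tfrac{x}{\lambda})-1|\leq C|x-X^r|^\beta/\lambda^\beta$ near $X^r$. For the second summand, an elementary subadditivity/mean-value estimate furnishes $|W_m^{\frac{4s}{n-2s}}-U_{P^r,\Lambda_r}^{\frac{4s}{n-2s}}|\leq C\bigl(\sum_{h\neq r}U_{P^h,\Lambda_h}\bigr)^{\min\{1,\,4s/(n-2s)\}}$ times a bounded power of $W_m$, a cross-bump interaction that is small near $X^r$ because the nearest other bump sits at distance $\geq \lambda l$.

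Inserting the pointwise controls $|Z_{r,t}|\leq C(1+|x-X^r|)^{-(n-2s)}$ and $|\phi|\leq \|\phi\|_*\gamma(x)\sum_h(1+|x-X^h|)^{-(n-2s)/2-\tau}$ reduces each contribution to a weighted convolution. The crucial step is to apply Lemma \ref{lm4} with $\sigma = n/2$ to each cross-term $(1+|x-X^r|)^{-\alpha_1}(1+|x-X^h|)^{-\alpha_2}$, $h\neq r$; admissibility $\sigma\leq\min(\alpha_1,\alpha_2)$ holds because $\tau>s$ (from $\tau>k\geq 1>s$) forces $(n-2s)/2+\tau\geq n/2$. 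This produces a factor $|X^r-X^h|^{-n/2}$, and the lattice sum
\begin{equation*}
\sum_{h\neq r}\frac{1}{|X^r-X^h|^{n/2}} = \frac{1}{(\lambda l)^{n/2}}\sum_{q\in\mathbb{Z}^k\setminus\{0\}}\frac{1}{|q|^{n/2}}\leq\frac{C}{(\lambda l)^{n/2}}
\end{equation*}
converges since $n/2>k$. Finally, $\gamma(x)\leq((1+|x-X^r|)/\lambda)^{\tau-s}$ on $\{|x-X^r|\leq\lambda\}$, combined with the rapid $(1+|x-X^r|)^{-(n-2s)}$ decay of $Z_{r,t}$ on $\{|x-X^r|>\lambda\}$, supplies the missing $\lambda^{s-\tau}$, giving the bound $C\|\phi\|_*/(\lambda^{\tau-s}(\lambda l)^{n/2})$.

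The main technical obstacle is the region-by-region bookkeeping when $\beta > n/2+2s$: then the integrand $|x-X^r|^\beta(1+|x-X^r|)^{-(n+2s+(n-2s)/2+\tau)}\gamma(x)$ is not integrable at infinity at the rate required, so one must cut at $|x-X^r|=\lambda$ and handle the complement via the uniform bound $|K|\leq C$. The compensation of the resulting $\lambda^{\beta-n/2-2s}$ factor against $\lambda^{-\beta}$ ultimately relies on the inequality $\beta\leq n+2s-8s^2/n$, which follows from $\beta<n$ and $n>4s$ (the latter guaranteed by $n>2s+2$ together with $s<1$). As in the proofs of Lemmas \ref{lm7} and \ref{lm11}, one splits $\mathbb{R}^n$ into the three zones $\Omega_i\cap B_i$, $\Omega_i\cap B_i^c\cap B_{i,m}$, $\Omega_i\cap B_{i,m}^c$ and verifies the desired estimate in each zone via Lemma \ref{lm9}.
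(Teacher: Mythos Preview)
Your proposal is correct and follows essentially the same approach as the paper's proof. Both exploit the orthogonality $\int U_{P^r,\Lambda_r}^{4s/(n-2s)}Z_{r,t}\phi=0$ to replace $K(\tfrac{x}{\lambda})W_m^{4s/(n-2s)}$ by $K(\tfrac{x}{\lambda})W_m^{4s/(n-2s)}-U_{P^r,\Lambda_r}^{4s/(n-2s)}$, then split into the $(K-1)U_{P^r}^{4s/(n-2s)}$ piece (handled via $(H_3)$ on $|x-X^r|\le\delta\lambda$ and the uniform bound outside, with the key arithmetic $\min\{\beta,\tfrac{n+4s}{2}\}>\tfrac{n}{2}\cdot\tfrac{\beta}{n-2s}$ reducing to $n>4s$) and the cross-bump piece $W_m^{4s/(n-2s)}-U_{P^r}^{4s/(n-2s)}$ (handled via Lemma~\ref{lm4} with $\sigma=\tfrac{n}{2}$ and the convergent lattice sum $\sum_{h\ne r}|X^r-X^h|^{-n/2}$). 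The only cosmetic difference is that the paper organizes the cross-bump piece by the dichotomy $\{\hat W_{m,r}\gtrless U_{P^r,\Lambda_r}\}$ rather than your direct pointwise inequality for $|a^p-b^p|$, and the paper uses the bound $\gamma(x)(1+|x-X^h|)^{-(n-2s)/2-\tau}\le\lambda^{s-\tau}(1+|x-X^h|)^{-n/2}$ termwise (valid for every $h$ since $\gamma$ is a minimum) in place of your two-zone treatment of $\gamma$; both routes yield the same estimate.
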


\begin{proof}
In the proof of this lemma, we denote $\hat{W}_{m,r}=\sum_{h\not=r} U_{P^h,\Lambda_h}$. It is easy to get
\begin{eqnarray}\label{20}
\int_{\mathbb{R}^n}K(\frac x\lambda)W_m^{\frac{4s}{n-2s}}Z_{r,t}\phi&=&\int_{\mathbb{R}^n}K(\frac x\lambda)U_{P^r,\Lambda_r}^{\frac{4s}{n-2s}}Z_{r,t}\phi+O\left(\int_{\hat{W}_{m,r}>U_{P^r,\Lambda_r}} \hat{W}_{m,r}^{\frac{4s}{n-2s}}Z_{r,t}\phi\right) \\
\nonumber&&+O\left(\int_{\hat{W}_{m,r}\leq U_{P^r,\Lambda_r}} U_{P^r,\Lambda_r}^{\frac{4s}{n-2s}}\hat{W}_{m,r}\phi\right).
\end{eqnarray}
We need to estimate each term in the equality above.

For $i\not=r$, from Lemma \ref{lm9}, we have
\begin{eqnarray*}
&&\left|\int_{\Omega_i\cap B_i}\hat{W}_{m,r}^{\frac{4s}{n-2s}}Z_{r,t}\phi\right| \\
&\leq& C\frac{\|\phi\|_*}{\lambda^{\tau-s}}\int_{\Omega_i\cap B_i}\left(\sum_{h\not=r}\frac1{(1+|x-X^h|)^{n-2s}} \right)^{\frac{4s}{n-2s}}\frac1{(1+|x-X^r|)^{n-2s}}\sum_h\frac1{(1+|x-X^h|)^{\frac n2}}dx \\
&\leq& C\frac{\|\phi\|_*}{\lambda^{\tau-s}}\int_{\Omega_i\cap B_i}\frac1{(1+|x-X^r|)^{n-2s}}\frac1{(1+|x-X^i|)^{\frac n2+4s}}dx\leq  C\frac{\|\phi\|_*}{\lambda^{\tau-s}|X^r-X^i|^{\frac n2}}.
\end{eqnarray*}
With the help of Lemma \ref{lm4} and Lemma \ref{lm9}, we get
\begin{eqnarray}\label{43}
\nonumber&&\left|\int_{\cup_h(\Omega_h\cap B_h^c)}\hat{W}_{m,r}^{\frac{4s}{n-2s}}Z_{r,t}\phi\right|  \\ \nonumber
&\leq& C\|\phi\|_*\int_{\cup_h(\Omega_h\cap B_h^c)}\frac1{(1+|x-X^r|)^{n-2s}}\left(\sum_{h\not=r}\frac1{(1+|x-X^h|)^{n-2s}}\right)^{\frac{4s}{n-2s}} \\
&&\hspace{2,cm}\times\left(\sum_h\frac1{(1+|x-X^h|)^{\frac{n-2s}2+\tau}}\right)dx \\ \nonumber
&\leq& C\frac{\|\phi\|_*}{(\lambda l)^{\frac{4s}{n-2s}k}}\int_{\cup_h(\Omega_h\cap B_h^c)}\frac1{(1+|x-X^r|)^{n-2s}}\sum_h\frac1{(1+|x-X^h|)^{\frac{n}2+3s+\tau-\frac{4s}{n-2s}k}}dx \\
&\leq& C\frac{\|\phi\|_*}{(\lambda l)^{\frac{n}2+s+\tau}}.\nonumber
\end{eqnarray}

Since in $\Omega_r\cap B_r$, there holds $\hat{W}_{m,r}\leq \sum_{j\not=r}\frac C{|X^j-X^r|^{n-2s}}\leq \frac C{(\lambda l)^{n-2s}}$. Then if $n\geq 6s$, we have
\begin{eqnarray*}
&&\left|\int_{\Omega_r\cap B_r\atop \hat{W}_{m,r}>U_{P^r,\Lambda_r}} \hat{W}_{m,r}^{\frac{4s}{n-2s}}Z_{r,t}\phi\right|\leq C\int_{\Omega_r\cap B_r}\hat{W}_{m,r}^{\frac{n+2s}{2(n-2s)}}U_{P^r,\Lambda_r}^{\frac{n+2s}{2(n-2s)}}|\phi| \\
&\leq& C\frac{\|\phi\|_*}{\lambda^{\tau-s}}\frac1{(\lambda l)^{\frac{n+2s}2}}\int_{\Omega_r\cap B_r} \frac1{(1+|x-X^r|)^{\frac{n+2s}2}}\sum_h\frac1{(1+|x-X^h|)^{\frac n2}}dx \\
&\leq& C\frac{\|\phi\|_*}{\lambda^{\tau-s}(\lambda l)^{\frac{n+2s}2}}.
\end{eqnarray*}
And if $n<6s$, we get $\frac{n+2s}2<4s$. In this case
\begin{eqnarray*}
\left|\int_{\Omega_r\cap B_r\atop \hat{W}_{m,r}>U_{P^r,\Lambda_r}} \hat{W}_{m,r}^{\frac{4s}{n-2s}}Z_{r,t}\phi\right|&\leq& \frac{C\|\phi\|_*}{\lambda^{\tau-s}(\lambda l)^{4s}}\int_{\Omega_r\cap B_r}\frac1{(1+|x-X^r|)^{n-2s}}\sum_h\frac1{(1+|x-X^h|)^{\frac n2}}dx \\
&\leq& \frac{C\|\phi\|_*}{\lambda^{\tau-s}(\lambda l)^{\frac{n+2s}2}}.
\end{eqnarray*}
From these arguments above, we arrive
\begin{equation}\label{48}
\left|\int_{\hat{W}_{m,r}>U_{P^r,\Lambda_r}} \hat{W}_{m,r}^{\frac{4s}{n-2s}}Z_{r,t}\phi\right|\leq C\frac{\|\phi\|_*}{\lambda^{\tau-s}(\lambda l)^{\frac n2}}.
\end{equation}

By a similar procedure, we get
\begin{equation}\label{49}
\left|\int_{\hat{W}_{m,r}\leq U_{P^r,\Lambda_r}}U_{P^r,\Lambda_r}^{\frac {4s}{n-2s}}\hat{W}_{m,r}\phi\right|\leq C\frac{\|\phi\|_*}{\lambda^{\tau-s}(\lambda l)^{\frac{n}2}}.
\end{equation}

Now we estimate the first term on the right hand side of the equality \eqref{20}. Since $\phi$ satisfies the second equality in \eqref{17}, we have
\begin{eqnarray*}
&&\left|\int_{\mathbb{R}^n}K(\frac x \lambda)U_{P^r,\Lambda_r}^{\frac{4s}{n-2s}}Z_{r,t}\phi\right|= \left|\int_{\mathbb{R}^n}\left(K(\frac x \lambda)-1\right)U_{P^r,\Lambda_r}^{\frac{4s}{n-2s}}Z_{r,t}\phi\right| \\
& \leq& \frac{\|\phi\|_*}{\lambda^{\tau-s}}\int_{\mathbb{R}^n}\left|K(\frac x \lambda)-1\right|U_{P^r,\Lambda_r}^{\frac{n+2s}{n-2s}}\sum_h\frac1{(1+|x-X^h|)^{\frac{n}2}}dx.
\end{eqnarray*}
On one hand, Lemma \ref{lm4} implies that
\begin{eqnarray*}
&&\int_{\mathbb{R}^n}\left|K(\frac x \lambda)-1\right|U_{P^r,\Lambda_r}^{\frac{n+2s}{n-2s}}\sum_{h\not=r}\frac1{(1+|x-X^h|)^{\frac{n}2}}dx \\
&\leq& C\int_{\mathbb{R}^n}\frac1{(1+|x-X^r|)^{n+2s}}\sum_{h\not=r}\frac1{(1+|x-X^h|)^{\frac{n}2}}dx \\
&\leq& \frac{C}{(\lambda l)^{\frac{n}2}}.
\end{eqnarray*}
On the another hand, choose $\delta$ to be a fixed constant small enough,
\begin{eqnarray*}
&&\int_{\mathbb{R}^n}\left|K(\frac x \lambda)-1\right|U_{P^r,\Lambda_r}^{\frac{n+2s}{n-2s}}\frac1{(1+|x-X^r|)^{\frac{n}2}}\\
&=& \int_{|x-X^r|\leq \delta\lambda}\left|K(\frac x \lambda)-1\right|U_{P^r,\Lambda_r}^{\frac{n+2s}{n-2s}}\frac1{(1+|x-X^r|)^{\frac{n}2}} \\
&&+ \int_{|x-X^r|>\delta\lambda}\left|K(\frac x \lambda)-1\right|U_{P^r,\Lambda_r}^{\frac{n+2s}{n-2s}}\frac1{(1+|x-X^r|)^{\frac{n}2}}=:J_1+J_2.
\end{eqnarray*}
From the condition ($H_3$), we have
\begin{equation*}
|J_1|\leq \frac C{\lambda^\beta}\int_{|x-X^r|\leq \delta \lambda}\frac{|x-X^r|^\beta}{(1+|x-X^r|)^{n+2s+\frac{n}2}}\leq\left\{\begin{array}{lll}
\frac {C\log \lambda}{\lambda^{\frac{n+4s}2}},& \mbox{if $\beta\geq\frac{n+4s}2$,}\\
\\
\frac C{\lambda^\beta}, &\mbox{if $\beta<\frac{n+4s}2$.}\end{array}
\right.
\end{equation*}
For the term $J_2$, a direct calculation yields
\begin{eqnarray*}
J_2&=&\int_{|x-X^r|> \delta\lambda} \left|K(\frac x \lambda)-1\right|U_{P^r,\Lambda_r}^{\frac{n+2s}{n-2s}}\frac1{(1+|x-X^r|)^{\frac{n}2}} \\
&\leq& \int_{|x-X^r|> \delta\lambda}\frac1{(1+|x-X^r|)^{n+2s+\frac{n}2}} \\
&\leq& \frac C{\lambda^{\frac{n+4s}{2}}}.
\end{eqnarray*}
Since $\min\{\beta,\frac{n+4s}2\}>\frac n2\frac \beta{n-2s}$, the definition of $\lambda$ implies
\[\int_{\mathbb{R}^n}\left|K(\frac x \lambda)-1\right|U_{P^r,\Lambda_r}^{\frac{n+2s}{n-2s}}\frac1{(1+|x-X^r|)^{\frac{n}2}}\leq \frac C{(\lambda l)^{\frac n2}}.\]
Hence we obtain
\begin{equation}\label{50}
\left|\int_{\mathbb{R}^n}K(\frac x \lambda)U_{P^r,\Lambda_r}^{\frac{4s}{n-2s}}Z_{r,t}\phi\right|\leq C\frac{\|\phi\|_*}{\lambda^{\tau-s}(\lambda l)^{\frac{n}2}}.
\end{equation}
Putting \eqref{48}, \eqref{49} and \eqref{50} into \eqref{20}, we get this lemma.

\end{proof}

\begin{lemma}\label{lm14}
It holds that
\begin{equation}\label{42}
\int_{\cup_h (\Omega_h\cap B_h^c)}|\phi|^{\frac{n+2s}{n-2s}}U_{P^i,\Lambda_i}\leq C\frac{\|\phi\|_*^{\frac{n+2s}{n-2s}}}{(\lambda l)^{\frac{n-2s}2+\frac{n+2s}{n-2s}\tau}},
\end{equation}
and
\begin{equation}\label{44}
\left|\int_{\mathbb{R}^n}W_m^{\frac{6s-n}{n-2s}}\phi^2U_{P^i,\Lambda_i}\right|\leq \frac{C\|\phi\|_*^2}{\lambda^{2(\tau-s)}}.
\end{equation}
\end{lemma}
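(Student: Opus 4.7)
Set $p:=\frac{n+2s}{n-2s}$ and $\alpha:=\frac{n-2s}{2}+\tau$. Both estimates spring from the pointwise bound
\[
|\phi(y)|\leq \|\phi\|_*\,\gamma(y)\sum_{j=1}^{(m+1)^k}\frac{1}{(1+|y-X^j|)^\alpha}
\]
built into the definition of $\|\cdot\|_*$, combined with Lemma \ref{lm9} to control the sum, Lemma \ref{lm4} to split products of decay factors centered at different lattice points, and the lattice convergence $\sum_{h\neq i}|X^h-X^i|^{-\sigma}\leq C(\lambda l)^{-\sigma}$ for $\sigma>k$ used already in \eqref{67}. I would partition the domain of integration by the Voronoi cells $\Omega_h$, split each $\Omega_h$ into $\Omega_h\cap B_h$ and $\Omega_h\cap B_h^c$ when needed, and estimate each piece separately.

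For \eqref{42}, I raise the pointwise bound to the $p$-th power, use $\gamma\leq 1$, and apply Lemma \ref{lm9} on $\Omega_h\cap B_h^c$ (both subcases $B_h^c\cap B_{h,m}$ and $B_{h,m}^c$ give the same form) to get $(\sum_j(1+|y-X^j|)^{-\alpha})^p\leq C/((1+|y-X^h|)^{p(\alpha-k)}(\lambda l)^{pk})$. With $U_{P^i,\Lambda_i}(y)\leq C(1+|y-X^i|)^{-(n-2s)}$, the case $h=i$ reduces to $\int_{|y-X^i|\geq\lambda l}(1+|y-X^i|)^{-(p(\alpha-k)+n-2s)}dy$; integrability is guaranteed by $\tau>k$ and the integral equals $C(\lambda l)^{2s-p\alpha}$, which with the $(\lambda l)^{-pk}$ prefactor gives the target power $-(\tfrac{n-2s}{2}+p\tau)$. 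For $h\neq i$, I would apply Lemma \ref{lm4} with $\sigma$ in $(k,p(\alpha-k)-2s)$; this interval is non-empty since $\tau>k$ and $\tau<(n-2s)/2$. Each of the two terms produced by Lemma \ref{lm4} integrates over $\{|y-X^h|\geq\lambda l\}$ (respectively $\{|y-X^i|\geq\lambda l\}$, which holds on $\Omega_h$) to $C(\lambda l)^{2s+\sigma-p(\alpha-k)}$; summing over $h$ via the lattice convergence produces a factor $(\lambda l)^{-\sigma}$, the $\sigma$-dependence cancels, and the total is again $C(\lambda l)^{-((n-2s)/2+p\tau)}$.

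For \eqref{44}, the key refinement is that on $\Omega_h$ the minimizer in the definition of $\gamma$ is attained at $X^h$ (since $\tau>s$), so $\gamma(y)\leq((1+|y-X^h|)/\lambda)^{\tau-s}$ and I can extract the desired factor $\lambda^{-2(\tau-s)}$ from $\phi^2$. To handle $W_m^{(6s-n)/(n-2s)}$ I distinguish $n\geq 6s$ (where $W_m\geq U_{P^h,\Lambda_h}$ and the negative exponent give $W_m^{(6s-n)/(n-2s)}\leq C(1+|y-X^h|)^{n-6s}$) from $n<6s$ (where $W_m\leq C$ bounds the quantity). On $\Omega_h\cap B_h$ with $h=i$ I use $W_m\geq U_{P^i,\Lambda_i}$ directly to get $W_m^{(6s-n)/(n-2s)}U_{P^i,\Lambda_i}\leq C U_{P^i,\Lambda_i}^{4s/(n-2s)}\leq C(1+|y-X^i|)^{-4s}$ and Lemma \ref{lm9}'s local bound $\sum\leq C(1+|y-X^i|)^{-\alpha}$; the exponents collapse exactly so that the integrand becomes $C(1+|y-X^i|)^{-(n+4s)}$, integrable to $O(1)$. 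For $h\neq i$ on $\Omega_h\cap B_h$ I bound $U_{P^i,\Lambda_i}(y)\leq C|X^i-X^h|^{-(n-2s)}$ via $|y-X^i|\geq|X^i-X^h|/2$, integrate $(1+|y-X^h|)^{-6s}$ over $B_h$ to get $O((\lambda l)^{n-6s})$ when $n>6s$, and sum using $\sum_{h\neq i}|X^h-X^i|^{-(n-2s)}\leq C(\lambda l)^{-(n-2s)}$ (valid since $n-2s>k$); the resulting prefactor $(\lambda l)^{-4s}\leq 1$ is harmless. On $\Omega_h\cap B_h^c$ I switch to $\gamma\leq 1$ and use Lemma \ref{lm9}'s lattice-averaged bound: the gain $(\lambda l)^{-2k}$ from the sum, together with integrability guaranteed by $\tau>k$, produces a residual factor of order $(\lambda l)^{-(2s+2\tau)}\leq \lambda^{-2(\tau-s)}$, again respecting the target.

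\textbf{Main obstacle.} The delicate part is the $n\geq 6s$ case of \eqref{44}, where $W_m^{(6s-n)/(n-2s)}$ has non-positive exponent and its polynomial growth factor $(1+|y-X^h|)^{n-6s}$ must be absorbed by the combined decay of $|\phi|^2 U_{P^i,\Lambda_i}$. The correct choice of bound on $\gamma$ differs between regions (the sharp $\lambda$-power on $\Omega_h\cap B_h$ to supply $\lambda^{-2(\tau-s)}$, but only $\gamma\leq 1$ on $\Omega_h\cap B_h^c$ where the Lemma \ref{lm9} lattice average does the work), and these choices must be coordinated with Lemma \ref{lm4}'s $\sigma$-parameter and the lattice sum's $\sigma>k$ requirement so that every integral converges and every sum converges while the target power of $\lambda$ is hit exactly. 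The bookkeeping of these exponents, together with the case split $n\gtrless 6s$, is the principal technical burden; everything else is routine application of Lemmas \ref{lm4} and \ref{lm9}.
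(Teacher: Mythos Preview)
Your proposal is correct and follows essentially the same route as the paper: Voronoi decomposition into $\Omega_h\cap B_h$ versus $\Omega_h\cap B_h^c$, Lemma~\ref{lm9} to control the lattice sum, the $\gamma$-bound on $\Omega_h$ to extract the factor $\lambda^{-2(\tau-s)}$, the case split $n\gtrless 6s$ for the sign of the exponent on $W_m$, and the lattice convergence $\sum_{h\neq i}|X^h-X^i|^{-(n-2s)}\leq C(\lambda l)^{-(n-2s)}$ to sum the off-diagonal pieces. The paper's own proof is terser---it defers \eqref{42} entirely to the computation in \eqref{43} and handles the outer region of \eqref{44} by the same referral---but the substance matches your sketch; in particular your integral $\int_{\Omega_t\cap B_t}(1+|y-X^t|)^{-6s}(1+|y-X^i|)^{-(n-2s)}$ for $t\neq i$ and the resulting logarithmic loss at $n=6s$ are exactly what the paper writes down.
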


\begin{proof}
The estimate \eqref{42} follows by the same method as in \eqref{43}. So we only prove the estimation \eqref{44}.

Using the same trick as in \eqref{43}, we have
\begin{equation}\label{45}
\left|\int_{\cup_h (\Omega_h\cap B_h^c)}W_m^{\frac{6s-n}{n-2s}}\phi^2U_{P^i,\Lambda_i}\right|\leq \frac{C\|\phi\|_*^2}{\lambda^{2(\tau+s)}}.
\end{equation}
According to Lemma \ref{lm9}, we have for $t\not=i$,
\begin{equation*}
\left|\int_{\Omega_t\cap B_t}W_m^{\frac{6s-n}{n-2s}}\phi^2U_{P^i,\Lambda_i}\right|\leq \frac{C\|\phi\|_*^2}{(\lambda l)^{2(\tau-s)}}\int_{\Omega_t\cap B_t}\frac1{(1+|y-X^t|)^{6s}}\frac1{(1+|y-X^i|)^{n-2s}}dy.
\end{equation*}
If $n\geq 6s$
\[\int_{\Omega_t\cap B_t}\frac1{(1+|y-X^t|)^{6s}}\frac1{(1+|y-X^i|)^{n-2s}}dy\leq \frac{(\lambda l)^{n-6s}\log(\lambda l)}{|X^i-X^t|^{n-2s}};\]
If otherwise,  $n< 6s$, there holds
\[\int_{\Omega_t\cap B_t}\frac1{(1+|y-X^t|)^{6s}}\frac1{(1+|y-X^i|)^{n-2s}}dy \leq \frac1{|X^i-X^t|^{n-2s}}.\]
Hence
\begin{equation}\label{46}
\sum_{t\not=i}\left|\int_{\Omega_t\cap B_t}W_m^{\frac{6s-n}{n-2s}}\phi^2U_{P^i,\Lambda_i}\right|\leq \frac{C\|\phi\|_*^2}{(\lambda l)^{2(\tau-s)}}\frac {C\log(\lambda l)}{(\lambda l)^{\min\{n-2s,4s\}}}.
\end{equation}
Using Lemma \ref{lm9}, we also have
\begin{equation}\label{47}
\left|\int_{\Omega_i\cap B_i}W_m^{\frac{6s-n}{n-2s}}\phi^2U_{P^i,\Lambda_i}\right|\leq \frac{C\|\phi\|_*^2}{(\lambda l)^{2(\tau-s)}}\int_{\Omega_i\cap B_i}\frac1{(1+|y-X^i|)^{n+4s}}.
\end{equation}
So we obtain the estimate \eqref{44} from \eqref{45}, \eqref{46} and \eqref{47}.

\end{proof}

\begin{lemma}\label{lm12}
If $\phi$ is the solution of the equation
\begin{equation}\label{29}
(-\Delta)^s\phi(x)-\frac{n+2s}{n-2s}K(\frac x{\lambda})W_m^{\frac{4s}{n-2s}}(x)\phi(x)=g(x),
\end{equation}
satisfying $\|\phi\|_*<+\infty$, then we have \[\displaystyle\sup_{x_1\not=x_2}\frac{|\lambda^{\tau-s}\phi(x_1)-\lambda^{\tau-s}\phi(x_2)|}{|x_1-x_2 |^\alpha}\leq C\max\{\|\phi\|_*,\|g\|_{**}\},\]
where $\alpha=\min\{2s,1\}$ and the constant $C$ does not depend on $\lambda$ and $m$.
\end{lemma}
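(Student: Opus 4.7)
My plan is to pass to the integral formulation of \eqref{29} via the Green function of $(-\Delta)^s$, reduce the problem to a Hölder estimate for a Riesz potential with appropriately decaying right-hand side, and then split the kernel increment into a local and a far contribution in the classical way.

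First I would rewrite \eqref{29} as
\[
\phi(y)=C_1(n,s)\int_{\mathbb{R}^n}\frac{h(z)}{|y-z|^{n-2s}}dz,\qquad h(z)=\tfrac{n+2s}{n-2s}K(\tfrac{z}{\lambda})W_m^{\frac{4s}{n-2s}}(z)\phi(z)+g(z).
\]
Using the $\|\phi\|_*$ bound together with the pointwise control $W_m^{\frac{4s}{n-2s}}(z)\leq C\sum_h(1+|z-X^h|)^{-4s}$, and using the $\|g\|_{**}$ bound for $g$, I would show (by a computation analogous to Lemma \ref{lm11} and the proof of Lemma \ref{lm6}) that
\[
|h(z)|\leq C\,\max\{\|\phi\|_*,\|g\|_{**}\}\,\gamma(z)\sum_{h=1}^{(m+1)^k}\frac{1}{(1+|z-X^h|)^{\frac{n+2s}{2}+\tau}}.
\]
In particular, using $\gamma(z)\leq\lambda^{-(\tau-s)}(1+|z-X^i|)^{\tau-s}$ for $z\in\Omega_i$ and the fact that $\tau>k$ makes $\sum_h(1+|z-X^h|)^{-n/2}$ convergent (see \eqref{67}), the global bound $\lambda^{\tau-s}|h(z)|\leq C\max\{\|\phi\|_*,\|g\|_{**}\}\cdot\sum_h(1+|z-X^h|)^{-(n+2s)/2-s}$ follows, which is both uniformly bounded and integrable against appropriate kernels.

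Next, fix $x_1\neq x_2$ and put $r=|x_1-x_2|$. If $r\geq 1$, the conclusion follows from the uniform $L^\infty$ bound on $\lambda^{\tau-s}\phi$ obtained as in the argument just above \eqref{21}. Assume $r<1$. I write
\[
\phi(x_1)-\phi(x_2)=C_1(n,s)\int_{\mathbb{R}^n}\Bigl(\tfrac{1}{|x_1-z|^{n-2s}}-\tfrac{1}{|x_2-z|^{n-2s}}\Bigr)h(z)\,dz
\]
and split into the near region $|z-x_1|\leq 2r$ and the far region $|z-x_1|>2r$. For the near part, estimating each kernel separately gives
\[
\int_{|z-x_1|\leq 2r}\frac{|h(z)|}{|x_j-z|^{n-2s}}dz\leq C\,r^{2s}\,\|h\|_{L^\infty(B_{3r}(x_1))},\qquad j=1,2,
\]
so this contribution is bounded by $Cr^{2s}\lambda^{-(\tau-s)}\max\{\|\phi\|_*,\|g\|_{**}\}$. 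For the far part, the mean value inequality $\bigl||x_1-z|^{-(n-2s)}-|x_2-z|^{-(n-2s)}\bigr|\leq Cr|x_1-z|^{-(n-2s+1)}$ (valid when $|x_1-z|\geq 2r$) leads to
\[
Cr\int_{|z-x_1|>2r}\frac{|h(z)|}{|x_1-z|^{n-2s+1}}dz\leq C r\,\lambda^{-(\tau-s)}\max\{\|\phi\|_*,\|g\|_{**}\}\cdot I(r),
\]
where the weighted integral $I(r)$ is controlled via Lemma \ref{lm10} and Lemma \ref{lm9}: it is $O(r^{2s-1})$ when $2s<1$ and $O(1)$ when $2s>1$ (with a harmless logarithmic factor at the borderline $2s=1$ that is absorbed by slightly decreasing the exponent). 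Combining the two contributions produces the bound $r^{\min\{2s,1\}}\lambda^{-(\tau-s)}\max\{\|\phi\|_*,\|g\|_{**}\}$.

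The main obstacle is making the far-region estimate uniform in $m$ and $\lambda$: the integrand $|h(z)|/|z-x_1|^{n-2s+1}$ involves a sum over $(m+1)^k$ points, and only the precise exponent $(n+2s)/2+\tau$ together with $\tau>k$ ensures that the resulting sum $\sum_h|X^h-x_1|^{-\alpha}$ is bounded by a constant independent of $m$. Once this is verified via Lemma \ref{lm9} and \ref{lm10}, the bookkeeping is routine and the desired Hölder bound with constant independent of $\lambda$ and $m$ follows.
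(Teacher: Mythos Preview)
Your approach is essentially the same as the paper's: both pass to the Riesz-potential representation of $\phi$, reduce to $|x_1-x_2|<1$ via the uniform $L^\infty$ bound, and then split the kernel increment into a near region (handled by the local $L^\infty$ bound on the right-hand side, yielding $r^{2s}$) and a far region (handled by the mean value inequality for $|y|^{-(n-2s)}$, yielding $r\cdot O(r^{2s-1})$ or $r\cdot O(1)$ depending on whether $2s\lessgtr 1$). The paper's far-region argument further splits into an intermediate shell $3r\le|y|\le 1$ and an outer piece $|y|\ge 1$, invoking Lemma~\ref{lm11} for the latter; your outline does the same in substance.

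One small remark: your claimed pointwise bound $|h(z)|\le C\max\{\|\phi\|_*,\|g\|_{**}\}\gamma(z)\sum_h(1+|z-X^h|)^{-\frac{n+2s}{2}-\tau}$ for the full right-hand side $h$ is not immediate from the crude inequality $W_m^{\frac{4s}{n-2s}}\le C\sum_h(1+|z-X^h|)^{-4s}$ alone (that sum need not be uniformly bounded in $m$ when $\frac{4s}{n-2s}\ge 1$). The paper avoids this by keeping the two pieces of $h$ separate and applying Lemma~\ref{lm11} directly at the integrated level for the $W_m^{\frac{4s}{n-2s}}\phi$ term, and Lemma~\ref{lm10} for the $g$ term. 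Either route works, but if you want the pointwise bound you should justify it region by region via Lemma~\ref{lm9} rather than via the displayed inequality.
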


\begin{proof}
Since $\displaystyle |\lambda^{\tau-s}\phi(x)|\leq \|\phi\|_*\sum_h\frac1{(1+|x-X^h|)^{\frac n2}}\leq C\|\phi\|_*,$ we can assume $|x_1-x_2|\leq\frac13$ with no loss of generality.
Using the Green function of $(-\Delta)^s$( see \cite{Caffarelli2007}), we can write \eqref{29} into the following form
\[\phi(x)=C\int_{\mathbb{R}^n}\frac1{|x-y|^{n-2s}}\left(\frac{n+2s}{n-2s}K(\frac y {\lambda})W_m^{ \frac{4s}{n-2s} }(y) \phi(y)+g(y)\right)dy,\]
Then we get
\begin{eqnarray*}
|\phi(x_1)-\phi(x_2)|&\leq& C\left|\int_{\mathbb{R}^n}\left(\frac1{|x_1-y|^{n-2s}}-\frac1{|x_2-y|^{n-2s}}\right) K(\frac y\lambda)W_m^{\frac{4s}{n-2s}}(y)\phi(y)dy\right| \\
&& + C\left|\int_{\mathbb{R}^n}\left(\frac1{|x_1-y|^{n-2s}}-\frac1{ |x_2-y|^{n-2s}}\right)g(y)dy\right| \\
&=:& C(H_1+H_2).
\end{eqnarray*}
Using the definition of the norm $\|\cdot\|_*$, there hold
\begin{eqnarray*}
&&|H_1|\leq \|\phi\|_*\left|\int_{\mathbb{R}^n}\left(\frac1{|x_1-x_2-y|^{n-2s}}-\frac1{|y|^{n-2s}}\right) K(\frac{y+x_2}\lambda)W_m^{\frac{4s}{n-2s}}(y+x_2)\times\right. \\
&&\hspace{1,cm}\left.\times\gamma(y+x_2)\sum_h\frac1{(1+|y+x_2-X^h|)^{ \frac{n-2s}2+\tau}}dy\right| \\
&=&\|\phi\|_*\left\{\int_{|y|\leq 3|x_1-x_2|}\left(\frac1{|x_1-x_2-y|^{n-2s}}-\frac1{|y|^{n-2s}}\right) K(\frac{y+x_2}\lambda)W_m^{ \frac{ 4s}{n-2s}}(y+x_2)\times\right. \\
&&\hspace{1,cm}\times\gamma(y+x_2)\sum_h\frac1{(1+|y+x_2-X^h|)^{ \frac{n-2s}2+\tau}}dy \\
&&+\int_{|y|\geq 3|x_1-x_2|} \left(\frac1{|x_1-x_2-y|^{n-2s}}-\frac1{|y|^{n-2s}}\right) K(\frac{y+x_2}\lambda)W_m^{ \frac{ 4s}{n-2s}}(y+x_2)\times \\
&&\left.\hspace{1,cm}\times\gamma(y+x_2)\sum_h\frac1{(1+|y+x_2-X^h|)^{ \frac{n-2s}2+\tau}}dy\right\} \\
&=:& \|\phi\|_*(K_1+K_2).
\end{eqnarray*}
For the term $K_1$, we have
\begin{equation*}
|K_1|\leq \frac C{\lambda^{\tau-s}}\int_{|y|\leq 4|x_1-x_2|}\frac1{|y|^{n-2s}}\leq \frac C{\lambda^{\tau-s}}|x_1-x_2|^{2s}.
\end{equation*}
For the term $K_2$, we have
\begin{eqnarray*}
|K_2|&\leq& C|x_1-x_2|\int_0^1dt\int_{|y|\geq 3|x_1-x_2|}\frac1{|t(x_1-x_2)-y|^{n-2s+1}}W_m^{\frac{ 4s}{n-2s}}(y+x_2) \\
&&\hspace{1,cm}\times\gamma(y+x_2)\sum_h\frac1{(1+|y+x_2-X^h|)^{\frac{n-2s}2+\tau}}dy \\
&=& C|x_1-x_2| \int_0^1dt\left(\int_{1>|y|\geq 3|x_1-x_2|}+\int_{|y|\geq 1}\right)\frac1{|t(x_1-x_2)-y|^{n-2s+1}}W_m^{\frac{4s}{n-2s}}(y+x_2) \\
&&\hspace{1,cm}\times\gamma(y+x_2)\sum_h\frac1{(1+|y+x_2-X^h|)^{\frac{n-2s}2+\tau}}dy \\
&=:& C|x_1-x_2|(M_1+M_2).
\end{eqnarray*}
Since it holds that $|t(x_1-x_2)-y|\in [2|x_1-x_2|,\frac43)$ for $1>|y|\geq 3|x_1-x_2|$, we get
\begin{equation*}
|M_1|\leq \frac1{\lambda^{\tau-s}}\int_{2|x_1-x_2|\leq |y|\leq \frac43}\frac1{|y|^{n-2s+1}}\leq \frac1{\lambda^{\tau-s}} (C+C|x_1-x_2|^{2s-1}).
\end{equation*}
For $|y|\geq1$, we have $\frac23|y|\leq |t(x_1-x_2)-y|\leq\frac43|y|$. Lemma \ref{lm11} yields
\begin{eqnarray*}
M_2&\leq& C\int_{|y|\geq 1}\frac1{|y|^{n-2s+1}}W_m^{\frac{4s}{n-2s}}(y+x_2)\gamma(y+x_2) \sum_h\frac1{(1+|y+x_2-X^h|)^{\frac{n-2s}2+\tau}}dy \\
&\leq& C\int_{\mathbb{R}^n}\frac1{|y-x_2|^{n-2s}}W_m^{\frac{4s}{n-2s}}(y) \gamma(y)\sum_h\frac1{(1+|y-X^h|)^{\frac{n-2s}2+\tau}}dy \\
&\leq& \frac C{\lambda^{\tau-s}}.
\end{eqnarray*}
Hence $|H_1|\leq \frac C{\lambda^{\tau-s}}\|\phi\|_*|x_1-x_2|^\alpha$. The same procedure with the help of Lemma \ref{lm10} yields that
$|H_2|\leq \frac C{\lambda^{\tau-s}}\|g\|_{**}|x_1-x_2|^\alpha$. Then Lemma \ref{lm12} follows.

\end{proof}

\section{Expansions of the functionals $\frac \partial {\partial \Lambda_i}I(W_m)$ and $\frac \partial {\partial P^i_{ j}}I(W_m)$}\label{sect5}
In this section, we will expand the functionals $\frac \partial {\partial \Lambda_i}I(W_m)$ and $\frac \partial {\partial P^i_{ j}}I(W_m)$. A direct computation yields
\begin{eqnarray}\label{8}
\nonumber \frac {\partial I} {\partial \Lambda_i}(W_m)&=&\int_{\mathbb{R}^n} W_m(-\Delta)^s \frac{\partial W_m}{\partial \Lambda_i}-\int_{\mathbb{R}^n}K\left(\frac x\lambda\right)W_m^{\frac{n+2s}{n-2s}}\frac{\partial W_m}{\partial \Lambda_i} \\
&=& \int_{\mathbb{R}^n}\sum_{h=1}^{(m+1)^k} U_{P^h,\Lambda_h}^{\frac{n+2s}{n-2s}}\frac{\partial U_{P^i,\Lambda_i}}{\partial \Lambda_i}-\int_{\mathbb{R}^n}K\left(\frac x \lambda\right)W_m^{\frac{n+2s}{n-2s}}\frac{\partial U_{P^i,\Lambda_i}}{\partial \Lambda_i},
\end{eqnarray}
and
\begin{eqnarray}\label{7}
\nonumber \frac {\partial I} {\partial P^i_j}(W_m)&=&\int_{\mathbb{R}^n} W_m(-\Delta)^s \frac{\partial W_m}{\partial P^i_j}-\int_{\mathbb{R}^n}K\left(\frac x\lambda\right)W_m^{\frac{n+2s}{n-2s}}\frac{\partial W_m}{\partial P^i_j} \\
&=& \int_{\mathbb{R}^n}\sum_{h\not=i} U_{P^h,\Lambda_h}^{\frac{n+2s}{n-2s}}\frac{\partial U_{P^i,\Lambda_i}}{\partial P^i_j}-\int_{\mathbb{R}^n}K\left(\frac x \lambda\right)W_m^{\frac{n+2s}{n-2s}}\frac{\partial U_{P^i,\Lambda_i}}{\partial P^i_j}.
\end{eqnarray}
In order to get the  useful expansions, we need to estimate each term on the right hand side of  \eqref{8} and \eqref{7} above.

\begin{lemma}\label{lm2}
There holds
\begin{eqnarray*}
\int_{\mathbb{R}^n}K\left(\frac x \lambda\right)W_m^{\frac{n+2s}{n-2s}}\frac{\partial U_{P^i,\Lambda_i}}{\partial \Lambda_i}&=&\int_{\mathbb{R}^n}K\left(\frac x \lambda\right)\sum_h U_{P^h,\Lambda_h}^{\frac{n+2s}{n-2s}}\frac{\partial U_{P^i,\Lambda_i}}{\partial \Lambda_i} \\
&&\;\;+\frac{n+2s}{n-2s}\int_{\mathbb{R}^n}K\left(\frac x \lambda\right)U_{P^i,\Lambda_i}^{\frac {4s}{n-2s}}\sum_{h\not= i} U_{P^h,\Lambda_h}\frac{\partial U_{P^i,\Lambda_i}}{\partial \Lambda_i}+O\left((\lambda l)^{-n}\right).
\end{eqnarray*}
\end{lemma}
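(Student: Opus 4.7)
The plan is to carry out a first-order Taylor expansion of $W_m^{(n+2s)/(n-2s)}$ cell-by-cell on the Voronoi decomposition $\mathbb{R}^n=\bigcup_h\Omega_h$ and to estimate the remainders in the weighted framework of the appendix. Writing $\hat W_{m,h}:=\sum_{t\neq h}U_{P^t,\Lambda_t}$, so that $W_m=U_{P^h,\Lambda_h}+\hat W_{m,h}$, the pointwise expansion on $\Omega_h$ reads
$$W_m^{\frac{n+2s}{n-2s}}=U_{P^h,\Lambda_h}^{\frac{n+2s}{n-2s}}+\tfrac{n+2s}{n-2s}U_{P^h,\Lambda_h}^{\frac{4s}{n-2s}}\hat W_{m,h}+R_h,$$
where, on the subregion $\{\hat W_{m,h}\leq U_{P^h,\Lambda_h}\}$, one has $|R_h|\leq CU_{P^h,\Lambda_h}^{\frac{6s-n}{n-2s}}\hat W_{m,h}^{2}$ when $n\geq 6s$ and $|R_h|\leq C\hat W_{m,h}^{\frac{n+2s}{n-2s}}$ otherwise, while on $\{\hat W_{m,h}>U_{P^h,\Lambda_h}\}$ one always has $|R_h|\leq C(U_{P^h,\Lambda_h}^{\frac{4s}{n-2s}}\hat W_{m,h}+\hat W_{m,h}^{\frac{n+2s}{n-2s}})$.

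Summing the leading $U_{P^h,\Lambda_h}^{(n+2s)/(n-2s)}$ terms over $h$ and cancelling $\sum_hU_{P^h,\Lambda_h}^{(n+2s)/(n-2s)}$ from both sides of the identity to be proved, what remains on the left equals
$$\sum_{h}\int_{\Omega_h}K\bigl(\tfrac{x}{\lambda}\bigr)\Bigl[\tfrac{n+2s}{n-2s}U_{P^h,\Lambda_h}^{\frac{4s}{n-2s}}\hat W_{m,h}+R_h-\sum_{t\neq h}U_{P^t,\Lambda_t}^{\frac{n+2s}{n-2s}}\Bigr]\frac{\partial U_{P^i,\Lambda_i}}{\partial\Lambda_i}\,dx.$$
The $h=i$ contribution of the first summand reproduces the main cross-term on the right-hand side of the lemma, up to the cost of extending integration from $\Omega_i$ to $\mathbb{R}^n$ (which is absorbed into the error, as below). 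The task is then to show that all other pieces are $O((\lambda l)^{-n})$.

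For the off-diagonal pieces I would exploit that on $\Omega_j$ with $j\neq i$ one has $|x-X^i|\geq\tfrac12|X^i-X^j|\geq\tfrac12\lambda l$ together with the standard bound $|\partial_{\Lambda_i}U_{P^i,\Lambda_i}|\leq C(1+|x-X^i|)^{-(n-2s)}$; this provides an off-cell smallness factor $(\lambda l)^{-(n-2s)}$ for the test function. Combined with Lemma~\ref{lm9} (which, on $\Omega_h\cap B_h^c$, collapses the sum $\hat W_{m,h}$ into a single decaying profile times a gain $(\lambda l)^{-k}$) and with Lemma~\ref{lm4} (to trade two centres against pointwise decay at a common centre), each residual integral reduces to a convolution handled by Lemma~\ref{lm10}. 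In particular the cross-cell pieces $\sum_{j\neq i}\int_{\Omega_j}KU_{P^j,\Lambda_j}^{\frac{4s}{n-2s}}\hat W_{m,j}\,\partial_{\Lambda_i}U_{P^i,\Lambda_i}$ and $\sum_{j\neq i}\int_{\Omega_j}K\bigl(\sum_{t\neq j}U_{P^t,\Lambda_t}^{\frac{n+2s}{n-2s}}\bigr)\partial_{\Lambda_i}U_{P^i,\Lambda_i}$ each inherit the $(\lambda l)^{-(n-2s)}$ smallness, and the extra decay produced by the second bump brings them below $(\lambda l)^{-n}$.

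The main obstacle is the quadratic Taylor remainder on $\Omega_i$, namely $\int_{\Omega_i}U_{P^i,\Lambda_i}^{\frac{6s-n}{n-2s}}\hat W_{m,i}^{2}\,\partial_{\Lambda_i}U_{P^i,\Lambda_i}\,dx$ in the case $n\geq 6s$: the weight $U_{P^i,\Lambda_i}^{(6s-n)/(n-2s)}$ grows polynomially in $|x-X^i|$, so the decay extracted from $\hat W_{m,i}^{2}$ is borderline for the target $(\lambda l)^{-n}$. I would bound one factor of $\hat W_{m,i}$ by Lemma~\ref{lm4} (trading $|X^t-X^i|^{-\sigma}$ for decay in $|x-X^i|^{-\sigma}$ with $\sigma$ chosen slightly below $n-2s$), split $\Omega_i$ into $\Omega_i\cap B_i$, $\Omega_i\cap B_i^c\cap B_{i,m}$ and $\Omega_i\cap B_{i,m}^c$, apply Lemma~\ref{lm9} on the outer pieces to collect $(\lambda l)^{-k}$ gains, and sum using the convergence of $\sum_{t\neq i}|X^t-X^i|^{-\sigma}$ for $\sigma>k$. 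Calibrating $\sigma$ close to $n-2s$ then furnishes just enough decay to meet $(\lambda l)^{-n}$. A parallel but simpler argument, based on the $\hat W_{m,h}^{(n+2s)/(n-2s)}$ alternative bound for $R_h$, handles the sub-critical range $n<6s$ and the subset where $\hat W_{m,h}$ dominates.
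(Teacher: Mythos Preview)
Your overall strategy --- Taylor-expand $W_m^{(n+2s)/(n-2s)}$ around the dominant bubble on each Voronoi cell $\Omega_h$, separate the $h=i$ contribution as the main cross term, and bound the rest using Lemmas~\ref{lm4}, \ref{lm9}, \ref{lm10} --- is exactly the paper's approach, and your identification of the pieces to control is correct.

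The one notable difference is organizational, and it makes what you call the ``main obstacle'' disappear. The paper does \emph{not} work on the full cells $\Omega_h$; it first shows in one stroke that
\[
\int_{\cup_h(\Omega_h\cap B_h^c)}K\bigl(\tfrac x\lambda\bigr)W_m^{\frac{n+2s}{n-2s}}\Bigl|\frac{\partial U_{P^i,\Lambda_i}}{\partial\Lambda_i}\Bigr|\,dx=O\bigl((\lambda l)^{-n}\bigr),
\]
and thereafter works only on $\Omega_h\cap B_h$. On these inner balls one has the \emph{uniform} pointwise bound $\hat W_{m,h}\leq C(\lambda l)^{-(n-2s)}$, so the quadratic remainder on $\Omega_i\cap B_i$ (after absorbing $|\partial_{\Lambda_i}U_{P^i,\Lambda_i}|\leq CU_{P^i,\Lambda_i}$) is simply
\[
\int_{\Omega_i\cap B_i}U_{P^i,\Lambda_i}^{\frac{4s}{n-2s}}\hat W_{m,i}^{\,2}\,dx\leq \frac{C}{(\lambda l)^{2(n-2s)}}\int_{B_{\lambda l}(X^i)}(1+|x-X^i|)^{-4s}\,dx = O\bigl((\lambda l)^{-n}\bigr),
\]
using $n>4s$ (which follows from $n>2s+2$, $s<1$). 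No calibrated $\sigma$ is needed. Likewise the off-diagonal error on $\Omega_j\cap B_j$ for $j\neq i$ becomes the clean bound $O\bigl((\lambda l)^{-2s}|X^i-X^j|^{-(n-2s)}\bigr)$, which sums to $O((\lambda l)^{-n})$ since $n-2s>k$. Your three-annulus splitting of $\Omega_i$ and the calibrated use of Lemma~\ref{lm4} would ultimately reach the same bound, but the paper's order of operations --- strip off the far region first, then exploit the uniform smallness of $\hat W_{m,h}$ on the near region --- is substantially shorter and avoids any borderline bookkeeping.
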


\begin{proof}
We estimate the integration on different region. By the same method used in \eqref{43}, we have
\begin{equation}\label{57}
\int_{\cup_h(\Omega_h\cap B_h^c)}K(\frac x\lambda)W_m^{\frac{n+2s}{n-2s}}\big|\frac{\partial U_{P^i,\Lambda_i}}{\partial \Lambda_i}\big|=O\left((\lambda l)^{-n}\right).
\end{equation}

In the domain $\Omega_j\cap B_j$, where $j\not=i$, there holds $\hat{W}_{m,j}(y)\leq \sum_{h\not=j}\frac C{|X^j-X^h|}\leq \frac C{(\lambda l)^{{n-2s}}}\leq CU_{P^j,\Lambda_j}$. Taylor expansion yields
\begin{eqnarray*}
&&\int_{\Omega_j\cap B_j}K\left(\frac x\lambda\right)W_m^{\frac{n+2s}{n-2s}}\frac{\partial U_{P^i,\Lambda_i}}{\partial \Lambda_i} \\
&=&\int_{\Omega_j\cap B_j}K\left(\frac x\lambda\right)U_{P^j,\Lambda_j}^{\frac{n+2s}{n-2s}}\frac{\partial U_{P^i,\Lambda_i}}{\partial \Lambda_i}+O\left(\int_{\Omega_j\cap B_j}U_{P^j,\Lambda_j}^{\frac{4s}{n-2s}}\hat W_{m,j}\frac{\partial U_{P^i,\Lambda_i}}{\partial \Lambda_i}\right). \\
\end{eqnarray*}
For the error term, a direct computation yields
\begin{equation*}
\int_{\Omega_j\cap B_j}U_{P^j,\Lambda_j}^{\frac{4s}{n-2s}}\hat W_{m,j}\frac{\partial U_{P^i,\Lambda_i}}{\partial \Lambda_i}=O\left(\frac1{(\lambda l)^{2s}|X^i-X^j|^{n-2s}}\right).
\end{equation*}

\noindent\textbf{Claim:} For $j\not=i$, there holds
\begin{equation}\label{54}
\int_{\Omega_j\cap B_j}K\left(\frac x\lambda\right)U_{P^j,\Lambda_j}^{\frac{n+2s}{n-2s}}\frac{\partial U_{P^i,\Lambda_i}}{\partial \Lambda_i}=\int_{\mathbb{R}^n}K\left(\frac x\lambda\right)U_{P^j,\Lambda_j}^{\frac{n+2s}{n-2s}}\frac{\partial U_{P^i,\Lambda_i}}{\partial \Lambda_i}+O\left(\frac1{(\lambda l)^{2s}|X^i-X^j|^{n-2s}}\right).
\end{equation}
From direct computation
\begin{equation}\label{52}
\int_{\Omega_i\cap B_i}K(\frac x\lambda)U_{P^j,\Lambda_j}^{\frac{n+2s}{n-2s}}\frac{\partial U_{P^i, \Lambda_i}}{\partial \Lambda_i}=O\left(\frac{(\lambda l)^{2s}}{|X^i-X^j|^{n+2s}}\right).
\end{equation}
Using Lemma \ref{lm4}, we can obtain
\begin{eqnarray}\label{53}
\nonumber \sum_{h\not=i,j}\int_{\Omega_h\cap B_h}K(\frac x\lambda)U_{P^j,\Lambda_j}^{\frac{n+2s}{n-2s}}\left|\frac{\partial U_{P^i,\Lambda_i}}{\partial \Lambda_i}\right|&\leq&\sum_{h\not=i,j}\frac C{|X^i-X^j|^{n-2s}}\int_{\Omega_h\cap B_h}\frac 1{(1+|x-X^i|)^{n+2s}} \\
&\leq& \frac C{|X^i-X^j|^{n-2s}}\sum_{h\not=i}\frac{(\lambda l)^n}{|X^h-X^i|^{n+2s}} \\ \nonumber
&\leq& \frac C{(\lambda l)^{2s}|X^i-X^j|^{n-2s}},
\end{eqnarray}
and
\begin{equation}\label{51}
\int_{\cup_h(\Omega_h\cap B_h^c)}K(\frac x\lambda)U_{P^j,\Lambda_j}^{\frac{n+2s}{n-2s}}\big|\frac{\partial U_{P^i,\Lambda_i}}{\partial \Lambda_i}\big|=O\left(\frac1{(\lambda l)^{2s}|X^i-X^j|^{n-2s}}\right).
\end{equation}
From \eqref{52}, \eqref{53} and \eqref{51}, we know the Claim is true.

Hence for $j\not=i$,
\begin{equation}\label{58}
\int_{\Omega_j\cap B_j}K\left(\frac x\lambda\right)W_m^{\frac{n+2s}{n-2s}}\frac{\partial U_{P^i,\Lambda_i}}{\partial \Lambda_i}=\int_{\mathbb{R}^n}K\left(\frac x\lambda\right)U_{P^j,\Lambda_j}^{\frac{n+2s}{n-2s}}\frac{\partial U_{P^i,\Lambda_i}}{\partial \Lambda_i}+ O\left(\frac1{(\lambda l)^{2s}|X^i-X^j|^{n-2s}}\right).
\end{equation}

Now we estimate the integration on $\Omega_i\cap B_i$. By Taylor expansion,
\begin{eqnarray}\label{55}
&&\int_{\Omega_i\cap B_i}K\left(\frac x\lambda\right)W_m^{\frac{n+2s}{n-2s}}\frac{\partial U_{P^i,\Lambda_i}}{\partial \Lambda_i}  \nonumber \\
&&=\int_{\Omega_i\cap B_i}K\left(\frac x\lambda\right)U_{P^i, \Lambda_i}^{\frac{n+2s}{n-2s}}\frac{\partial U_{P^i,\Lambda_i}}{\partial \Lambda_i}+\frac{n+2s}{n-2s}\int_{\Omega_i\cap B_i}K\left(\frac x\lambda\right)U_{P^i,\Lambda_i}^{\frac{4s}{n-2s}}\sum_{h\not=i} U_{P^h,\Lambda_h}\frac{\partial U_{P^i,\Lambda_i}}{\partial \Lambda_i} \nonumber\\
&&\hspace{2,cm}+O\left(\int_{\Omega_i\cap B_i}K(\frac x\lambda)U_{P^i,\Lambda_i}^{\frac{4s}{n-2s}}\hat W_{m,i}^2\right) \nonumber\\
&&=\int_{\Omega_i\cap B_i}K\left(\frac x\lambda\right)U_{P^i, \Lambda_i}^{\frac{n+2s}{n-2s}}\frac{\partial U_{P^i,\Lambda_i}}{\partial \Lambda_i}+\frac{n+2s}{n-2s}\int_{\Omega_i\cap B_i}K\left(\frac x\lambda\right)U_{P^i,\Lambda_i}^{\frac{4s}{n-2s}}\sum_{h\not=i} U_{P^h,\Lambda_h}\frac{\partial U_{P^i,\Lambda_i}}{\partial \Lambda_i} \nonumber \\
&&\hspace{2,cm}+O\left((\lambda l)^{-n}\right).
\end{eqnarray}
Since in the domain $\Omega_i^c\cup B_i^c$, we have $|y-X^i|\geq \min\{\lambda l,\min_{j\not=i}\frac 12|X^i-X^j|\}\geq \frac12\lambda l$. Then
\begin{equation}\label{56}
\int_{\Omega_i\cap B_i}K\left(\frac x\lambda\right)U_{P^i, \Lambda_i}^{\frac{n+2s}{n-2s}}\frac{\partial U_{P^i,\Lambda_i}}{\partial \Lambda_i}=\int_{\mathbb{R}^n}K\left(\frac x\lambda\right)U_{P^i, \Lambda_i}^{\frac{n+2s}{n-2s}}\frac{\partial U_{P^i,\Lambda_i}}{\partial \Lambda_i}+O\left((\lambda l)^{-n}\right).
\end{equation}
By a similar method used in the proof of \eqref{54}, we get
\begin{eqnarray}\label{60}
\int_{\Omega_i\cap B_i}K\left(\frac x\lambda\right)U_{P^i,\Lambda_i}^{\frac{4s}{n-2s}}U_{P^j,\Lambda_j}\frac{\partial U_{P^i,\Lambda_i}}{\partial \Lambda_i}&=&\int_{\mathbb{R}^n}K\left(\frac x\lambda\right)U_{P^i,\Lambda_i}^{\frac{4s}{n-2s}}U_{P^j,\Lambda_j}\frac{\partial U_{P^i,\Lambda_i}}{\partial \Lambda_i} \\
&&+O\left(\frac1{(\lambda l)^{2s}|X^i-X^j|^{n-2s}}\right).\nonumber
\end{eqnarray}
Substituting \eqref{56} and \eqref{60} into \eqref{55}, we have
\begin{eqnarray}\label{59}
\nonumber&&\int_{\Omega_i\cap B_i}K\left(\frac x\lambda\right)W_m^{\frac{n+2s}{n-2s}}\frac{\partial U_{P^i,\Lambda_i}}{\partial \Lambda_i} \\
&=&\int_{\mathbb{R}^n}K\left(\frac x\lambda\right)U_{P^i, \Lambda_i}^{\frac{n+2s}{n-2s}}\frac{\partial U_{P^i,\Lambda_i}}{\partial \Lambda_i}+\frac{n+2s}{n-2s}\int_{\mathbb{R}^n}K\left(\frac x\lambda\right)U_{P^i,\Lambda_i}^{\frac{4s}{n-2s}}\sum_{h\not=i} U_{P^h,\Lambda_h}\frac{\partial U_{P^i,\Lambda_i}}{\partial \Lambda_i} \\ \nonumber
&&+O\left((\lambda l)^{-n}\right).
\end{eqnarray}

Now Lemma \ref{lm2} follows from the estimate \eqref{57}, \eqref{58} and \eqref{59}.

\end{proof}

\begin{lemma}
For $h\not=i$, there holds
\begin{equation}\label{10}
\int_{\mathbb{R}^n}K\left(\frac x\lambda\right)U_{P^h,\Lambda_h}^{\frac{n+2s}{n-2s}}\frac{\partial U_{P^i,\Lambda_i}}{\partial \Lambda_i}=\int_{\mathbb{R}^n}U_{P^h,\Lambda_h}^{\frac{n+2s}{n-2s}}\frac{\partial U_{P^i,\Lambda_i}}{\partial \Lambda_i}+O\left(\frac1{\lambda^{2s}|X^i-X^h|^{n-2s}}\right),
\end{equation}
and
\begin{equation}\label{9}
\frac{n+2s}{n-2s}\int_{\mathbb{R}^n}K\left(\frac x\lambda\right)U_{P^i,\Lambda_i}^{\frac{4s}{n-2s}}U_{P^h,\Lambda_h}\frac{\partial U_{P^i,\Lambda_i}}{\partial \Lambda_i}=\int_{\mathbb{R}^n}U_{P^h,\Lambda_h}^{\frac{n+2s}{n-2s}}\frac{\partial U_{P^i,\Lambda_i}}{\partial \Lambda_i}+O\left(\frac1{\lambda^{2s}|X^i-X^h|^{n-2s}}\right).
\end{equation}
\end{lemma}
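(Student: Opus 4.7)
The plan is to prove the two estimates in parallel, with \eqref{9} reduced via an integration-by-parts identity to the same type of calculation as \eqref{10}.

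For \eqref{10}, I would write $K(x/\lambda) = 1 + (K(x/\lambda)-1)$, which immediately produces the leading term on the right-hand side. The task becomes bounding
\[
E := \int_{\mathbb{R}^n} \bigl(K(x/\lambda) - 1\bigr)\, U_{P^h,\Lambda_h}^{\frac{n+2s}{n-2s}} \frac{\partial U_{P^i,\Lambda_i}}{\partial \Lambda_i}\, dx
\]
by $C/\bigl(\lambda^{2s}|X^i-X^h|^{n-2s}\bigr)$. The crucial structural input is that $X^h/\lambda = l\, q_h$ with $q_h \in \mathbb{Z}^k \times \{\mathbf 0\}$, so by the periodicity in $(H_2)$ together with the expansion in $(H_3)$ one gets the pointwise bound $|K(x/\lambda)-1| \leq C|x-X^h|^\beta/\lambda^\beta$ on a ball $B_{\delta\lambda}(X^h)$, while $|K(x/\lambda)-1| \leq C$ globally by $(H_1)$.

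I would then split $\mathbb{R}^n$ into three regions: the near ball $B_{\delta\lambda}(X^h)$, the shell $\{|x-X^h|\geq \delta\lambda,\ |x-X^i|\geq \tfrac12|X^h-X^i|\}$, and the ball $B_{|X^h-X^i|/2}(X^i) \cap \{|x-X^h|\geq\delta\lambda\}$. In each piece I apply Lemma~\ref{lm4} with exponent $\sigma = n-2s$ to extract the factor $|X^h-X^i|^{-(n-2s)}$ from the product of $U$-weights, leaving a purely $\lambda$-dependent integral. On $B_{\delta\lambda}(X^h)$ the integral $\lambda^{-\beta}\int_{|y|\leq\delta\lambda} |y|^\beta/(1+|y|)^{n+2s}\, dy$ yields $O(\lambda^{-2s})$ precisely because $\beta \in (n-2s,n)$ forces $\beta > 2s$ (using $n > 2+2s$). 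On the far shell, $\int_{|x-X^h|\geq\delta\lambda}(1+|x-X^h|)^{-(n+2s)}\, dx = O(\lambda^{-2s})$. On the remaining piece near $X^i$, a crude bound gives $C|X^h-X^i|^{2s}/|X^h-X^i|^{n+2s} = C|X^h-X^i|^{-n}$, which is controlled by $C/(\lambda^{2s}|X^h-X^i|^{n-2s})$ since $|X^h-X^i|\geq \lambda l \geq \lambda$. Summing these pieces proves \eqref{10}.

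For \eqref{9}, the key step is the exact identity
\[
\frac{n+2s}{n-2s}\int_{\mathbb{R}^n} U_{P^i,\Lambda_i}^{\frac{4s}{n-2s}} U_{P^h,\Lambda_h}\, \frac{\partial U_{P^i,\Lambda_i}}{\partial \Lambda_i}\, dx \;=\; \int_{\mathbb{R}^n} U_{P^h,\Lambda_h}^{\frac{n+2s}{n-2s}}\, \frac{\partial U_{P^i,\Lambda_i}}{\partial \Lambda_i}\, dx,
\]
which I would obtain from self-adjointness of $(-\Delta)^s$ on $\dot H^s(\mathbb{R}^n)$, using that $U_{P^h,\Lambda_h}$ solves \eqref{4} while $\partial_{\Lambda_i} U_{P^i,\Lambda_i}$ satisfies the linearized equation $(-\Delta)^s \phi = \tfrac{n+2s}{n-2s} U_{P^i,\Lambda_i}^{4s/(n-2s)}\phi$. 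Subtracting this identity from the left-hand side of \eqref{9}, the required estimate becomes
\[
\frac{n+2s}{n-2s}\int_{\mathbb{R}^n} \bigl(K(x/\lambda)-1\bigr)\, U_{P^i,\Lambda_i}^{\frac{4s}{n-2s}} U_{P^h,\Lambda_h}\, \frac{\partial U_{P^i,\Lambda_i}}{\partial \Lambda_i}\, dx = O\!\left(\frac{1}{\lambda^{2s}|X^i-X^h|^{n-2s}}\right),
\]
whose integrand is bounded by $C\bigl[(1+|x-X^i|)^{n+2s}(1+|x-X^h|)^{n-2s}\bigr]^{-1}$ --- exactly the integrand of $E$ with the roles of $i$ and $h$ exchanged. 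The same three-region split, now centered at $X^i$ (where we invoke the near-critical-point behavior of $K$ via periodicity), yields the same bound.

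The main technical obstacle is making the near-center region produce exactly $\lambda^{-2s}$: this relies on the matching $\int_1^{\delta\lambda} r^{\beta-2s-1}\, dr \sim \lambda^{\beta-2s}$ being cancelled by the $\lambda^{-\beta}$ prefactor from $(H_3)$, which in turn requires $\beta > 2s$ --- guaranteed by $(H_3)$ together with the hypothesis $n > 2 + 2s$. Once the scaling aligns, the remaining work is a routine application of Lemma~\ref{lm4}.
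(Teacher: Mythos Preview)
Your proposal is correct and follows essentially the same strategy as the paper: the self-adjointness identity reduces \eqref{9} to \eqref{10}, and \eqref{10} is handled by writing $K=1+(K-1)$ and splitting into a near-$X^h$ ball (where $(H_3)$ gives the $|x-X^h|^\beta/\lambda^\beta$ bound), a region near $X^i$, and the remaining far region, with Lemma~\ref{lm4} extracting the factor $|X^i-X^h|^{-(n-2s)}$. The only cosmetic difference is that the paper takes the near-$X^i$ region to be $B_{\delta\lambda}(X^i)$ rather than $B_{|X^h-X^i|/2}(X^i)$, but both choices lead to the same estimate.
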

\begin{proof}
Notice the fact
\begin{eqnarray*}
\int_{\mathbb{R}^n}U_{P^h,\Lambda_h}^{\frac{n+2s}{n-2s}}\frac{\partial U_{P^i,\Lambda_i}}{\partial \Lambda_i}&=&\int_{\mathbb{R}^n}(-\Delta)^sU_{P^h,\Lambda_h} \frac{\partial U_{P^i,\Lambda_i}}{\partial \Lambda_i}  \\
&=& \int_{\mathbb{R}^n} U_{P^h,\Lambda_h} (-\Delta)^s \frac{\partial U_{P^i,\Lambda_i}}{\partial \Lambda_i} \\
&=& \frac{n+2s}{n-2s} \int_{\mathbb{R}^n} U_{P^h,\Lambda_h}U_{P^i,\Lambda_i}^{\frac{4s}{n-2s}} \frac{\partial U_{P^i,\Lambda_i}}{\partial \Lambda_i}.
\end{eqnarray*}
So the proof of \eqref{9} and \eqref{10} are identical. We only give a proof of \eqref{10}.

Choose $\delta$ to be a fixed constant some enough. Since $n>4s>n+2s-\beta$, the condition ($H_3$) implies

\begin{eqnarray}\label{69}
\nonumber&&\left|\int_{B_{\delta \lambda}(X^h)}\left|K\left(\frac x \lambda\right)-1\right|U_{P^h,\Lambda_h}^{\frac{n+2s}{n-2s}}\frac{\partial U_{P^i,\Lambda_i}}{\partial \Lambda_i}\right| \\ \nonumber
&\leq& C\int_{B_{\delta\lambda}(X^h)}\frac{|x-X^h|^\beta}{\lambda^\beta}\frac1{(1+|x-P^h|)^{n+2s}}\frac1{(1+ |x-P^i|)^{n-2s}} \\
&\leq& \frac C{\lambda^\beta|X^i-X^h|^{n-2s}}\int_{B_{\delta\lambda}(X^h)}\frac{|x-X^h|^\beta}{(1+|x-X^h|)^{n+2s}}\leq
\frac C{\lambda^{2s}|X^i-X^h|^{n-2s}}.
\end{eqnarray}
A direct calculation yields
\begin{equation}\label{70}
\left|\int_{B_{\delta \lambda}(X^i)}\left|K\left(\frac x \lambda\right)-1\right|U_{P^h,\Lambda_h}^{\frac{n+2s}{n-2s}}\frac{\partial U_{P^i,\Lambda_i}}{\partial \Lambda_i}\right|\leq \frac{C\lambda^{2s}}{|X^i-X^h|^{n+2s}}\leq \frac C{\lambda^{2s}|X^i-X^h|^{n-2s}}.
\end{equation}
Using Lemma \ref{lm4}, we have
\begin{eqnarray}\label{71}
\nonumber&&\left|\int_{B^c_{\delta\lambda}(X^h)\cap B^c_{\delta\lambda}(X^i)}\left|K\left(\frac x \lambda\right)-1\right|U_{P^h,\Lambda_h}^{\frac{n+2s}{n-2s}}\frac{\partial U_{P^i,\Lambda_i}}{\partial \Lambda_i}\right| \\ \nonumber
&\leq& C\int_{B^c_{\delta\lambda}(X^h)\cap B^c_{\delta\lambda}(X^i)}\frac1{(1+|x-X^h|)^{n+2s}}\frac1{ (1+|x-X^i|)^{n-2s}} \\
&\leq& \frac C{|X^h-X^i|^{n-2s}}\int_{B^c_{\delta\lambda}(X^h)}\frac1{(1+|x-X^h|)^{n+2s}}\leq \frac C{\lambda^{2s}|X^i-X^h|^{n-2s}}.
\end{eqnarray}
Hence \eqref{10} follows from \eqref{69}, \eqref{70} and \eqref{71}.

\end{proof}

\begin{lemma}\label{lm1}
We have
\begin{eqnarray}\label{6}
\nonumber\int_{\mathbb{R}^n}K\left(\frac x \lambda\right) U_{P^i,\Lambda_i}^{\frac{n+2s}{n-2s}}\frac{\partial U_{P^i,\Lambda_i}}{\partial \Lambda_i}&=&-\frac{n-2s}{2n}\frac{\beta C_0(n,s)^{\frac{2n}{n-2s}}(\sum_h a_h)}{\Lambda_i^{\beta+1}\lambda^\beta}\int_{\mathbb{R}^n}\frac{|x_1|^\beta}{(1+|x|^2)^n} \\
&&\;\;\;\;+O(\frac{|P^i-X^i|^{\min\{2,\beta-1\}}}{\lambda^\beta})+o\left(\lambda^{-\beta}\right),
\end{eqnarray}
and
\begin{eqnarray}\label{5}
\nonumber\int_{\mathbb{R}^n}K\left(\frac x \lambda\right)U_{P^i,\Lambda_i}^{\frac{n+2s}{n-2s}}\frac{\partial U_{P^i,\Lambda_i}}{\partial P^i_j}&=&\frac{(n-2s)C_0(n,s)^{\frac{2n}{n-2s}}\beta a_j}{\Lambda_i^{\beta-2} \lambda^\beta} \int_{\mathbb{R}^n}\frac{|x_1|^\beta}{(1+|x|^2)^{n+1}}(P^i_j-X^i_j) \\
&&\;\;\;\;+O\left(\frac {|P^i-X^i|^2} {\lambda^\beta}\right)+o\left(\lambda^{-\beta}\right),
\end{eqnarray}
where $i=1,\dots,(m+1)^k$ and $j=1,\dots,n$.
\end{lemma}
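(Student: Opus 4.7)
The plan is to reduce both integrals, via the change of variables $y = \Lambda_i(x-P^i)$, to integrals over $\mathbb{R}^n$ involving only the standard bubble $U$ centered at the origin, and then extract the leading behavior from the expansion $(H_3)$ of $K$ at each lattice point. I will use throughout that periodicity of $K$ in the first $k$ coordinates ensures $K(X^i/\lambda) = K(0) = 1$ and that the expansion $(H_3)$ applies with the same constants at every $\xi^i := X^i/\lambda$, so the argument of $K$ after rescaling is $\xi^i + (P^i-X^i)/\lambda + y/(\Lambda_i\lambda)$ and lies near $0$ modulo an integer lattice vector.

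For \eqref{6}, set $V(y) := \tfrac{n-2s}{2}U(y) + y\cdot\nabla U(y)$, so that $\partial_{\Lambda_i} U_{P^i,\Lambda_i}(x) = \Lambda_i^{(n-2s)/2-1} V(y)$. After the change of variables the integral equals $\Lambda_i^{-1}\!\int K(\xi^i + \tfrac{P^i-X^i}{\lambda} + \tfrac{y}{\Lambda_i\lambda})\, U^{(n+2s)/(n-2s)} V\,dy$. The scaling identity
\[
\int_{\mathbb{R}^n} U^{(n+2s)/(n-2s)} V\,dy \;=\; \frac{n-2s}{2n}\,\frac{d}{dt}\bigg|_{t=1}\!\int_{\mathbb{R}^n}\bigl(t^{(n-2s)/2}U(ty)\bigr)^{2n/(n-2s)}\,dy \;=\; 0
\]
(since the $L^{2^*(s)}$-norm of $U$ is dilation-invariant) lets me replace $K$ by $K-1$. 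Expanding via $(H_3)$ gives $K - 1 = \sum_h a_h\bigl|\tfrac{P^i_h - X^i_h}{\lambda} + \tfrac{y_h}{\Lambda_i\lambda}\bigr|^\beta + R(\,\cdot\,)$; Taylor-expanding $|a+b|^\beta$ in $b = (P^i_h - X^i_h)/\lambda$, the linear term is odd in $y_h$ and vanishes by radiality of $U^{(n+2s)/(n-2s)}V$, while the zeroth-order piece furnishes the leading term. Its coefficient $\int|y_h|^\beta U^{(n+2s)/(n-2s)} V\,dy$ is evaluated by the same scaling trick applied to $|y_h|^\beta (t^{(n-2s)/2}U(ty))^{2n/(n-2s)}$, yielding $-\tfrac{(n-2s)\beta}{2n}\,C_0^{2n/(n-2s)}\!\int|y_1|^\beta(1+|y|^2)^{-n}\,dy$; summing over $h$ and using radial symmetry produces exactly the coefficient in \eqref{6}.

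For \eqref{5}, the idea is to rewrite $U_{P^i,\Lambda_i}^{(n+2s)/(n-2s)} \partial_{P^i_j} U_{P^i,\Lambda_i} = -\tfrac{n-2s}{2n}\,\partial_{x_j}\!\bigl(U_{P^i,\Lambda_i}^{2n/(n-2s)}\bigr)$ and integrate by parts, transferring the derivative onto $K(x/\lambda)$ and producing a factor $\lambda^{-1}$. After rescaling $K_{,j}$ is evaluated at $\xi^i + (P^i-X^i)/\lambda + y/(\Lambda_i\lambda)$; by $(H_3)$ its leading piece is $a_j\beta|\eta_j|^{\beta-1}\mathrm{sgn}(\eta_j)$. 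At $P^i = X^i$ this is odd in $y_j$ and integrates to zero against $U^{2n/(n-2s)}$, while the first-order Taylor term $(\beta-1)|y_j/(\Lambda_i\lambda)|^{\beta-2}(P^i_j - X^i_j)/\lambda$ is even in $y_j$ and gives the leading contribution. The identity
\[
\int_{\mathbb{R}^n} |y_j|^{\beta-2}(1+|y|^2)^{-n}\,dy \;=\; \frac{2n}{\beta-1}\int_{\mathbb{R}^n} |y_j|^\beta(1+|y|^2)^{-n-1}\,dy,
\]
obtained by a single integration by parts using $\partial_{y_j}(1+|y|^2)^{-n} = -2n\,y_j(1+|y|^2)^{-n-1}$, then converts the coefficient into the form stated in \eqref{5}.

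The principal obstacle is making the remainder estimates rigorous. The expansion in $(H_3)$ is only valid near the critical point, so I would split the $y$-integration into an inner region $\{|y|\le\delta\Lambda_i\lambda\}$ and its complement. On the inner region the Taylor remainders are controlled by the $o(|w|^\beta)$ and $o(|w|^{\beta-1})$ decay of $R$ and $R_{,j}$ from $(H_3)$, using $n > 2s+2 \Rightarrow \beta > 2$ to ensure integrability of $|y|^{\beta-2}U^{2n/(n-2s)}$; on the outer region, boundedness of $K$ together with $U_{P^i,\Lambda_i}^{2n/(n-2s)}(x) \lesssim (\Lambda_i|x-P^i|)^{-2n}$ gives an error of order $\lambda^{-2n}$, which is $o(\lambda^{-\beta})$ since $\beta < n$. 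The limited smoothness of $|\cdot|^\beta$ forces a case distinction between $\beta \le 3$ and $\beta > 3$ in the second-order Taylor expansion for \eqref{5}, which is why the error there is $|P^i - X^i|^{\min\{2,\beta-1\}}/\lambda^\beta$ rather than $|P^i - X^i|^2/\lambda^\beta$.
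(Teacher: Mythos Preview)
Your outline is correct and considerably more detailed than the paper's own treatment, which simply defers to \cite{LiYY2016} for the ``standard calculations''. The change of variables, the dilation identity giving $\int U^{(n+2s)/(n-2s)}V=0$, the scaling computation of $\int|y_h|^\beta U^{(n+2s)/(n-2s)}V$, and the integration-by-parts reformulation for \eqref{5} together with the identity $\int |y_j|^{\beta-2}(1+|y|^2)^{-n}\,dy = \tfrac{2n}{\beta-1}\int |y_j|^{\beta}(1+|y|^2)^{-n-1}\,dy$ are exactly the right moves and produce the stated leading constants.

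Two remarks on the remainders. First, on the outer region $\{|y|>\delta\Lambda_i\lambda\}$ the decay $U^{2n/(n-2s)}(y)\lesssim|y|^{-2n}$ yields an error of order $\lambda^{-n}$, not $\lambda^{-2n}$; since $\beta<n$ this is still $o(\lambda^{-\beta})$, so your conclusion stands.

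Second, your final observation about \eqref{5} correctly identifies an obstruction to \emph{pointwise} Taylor expansion when $2<\beta\le3$: the second derivative of $t\mapsto|t|^{\beta-1}\operatorname{sgn}(t)$ is singular at $0$. The stated quadratic error $O(|P^i-X^i|^2/\lambda^\beta)$ is nonetheless reachable if you Taylor-expand the \emph{integrated} quantity instead. Setting
\[
G(p)\;=\;\int_{\mathbb R^n}|p+y_j|^{\beta-1}\operatorname{sgn}(p+y_j)\,(1+|y|^2)^{-n}\,dy,
\]
one has $G\in C^2$ near $0$ because $\int_{\mathbb R^n}|p+y_j|^{\beta-3}(1+|y|^2)^{-n}\,dy$ converges locally uniformly in $p$ for every $\beta>2$; moreover $G(0)=G''(0)=0$ by the same odd/even symmetry you already exploit. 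Hence $G(p)=G'(0)\,p+O(p^2)$, and the case distinction you flagged becomes unnecessary.
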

\begin{proof}
The two formulas  follows from some standard calculations, see \cite[Lemma A.9, Lemma A.10]{LiYY2016} for details.

\end{proof}

\begin{lemma}\label{lm3}
For $h\not=i$, we have
\begin{equation*}
\int_{\mathbb{R}^n} U_{P^h,\Lambda_h}^{\frac{n+2s}{n-2s}}\frac{\partial U_{P^i,\Lambda_i}}{\partial \Lambda_i}=c_0\frac{\partial\varepsilon_{ih}}{\partial \Lambda_i}+\frac1{\Lambda_i}O(\varepsilon_{hi}^{\frac{n}{n-2s}}\log \varepsilon_{hi}),
\end{equation*}
where $c_0=C_0(n,s)^{\frac{2n}{n-2s}}\int_{\mathbb{R}^n}\frac1{(1+|y|^2)^{\frac{n+2s}2}}$ and $\varepsilon_{ih} =\left(\frac1{\frac{\Lambda_i}{\Lambda_h}+\frac{\Lambda_h}{\Lambda_i}+\Lambda_i\Lambda_h |P^i-P^h|^2}\right)^\frac{n-2s}2$.
\end{lemma}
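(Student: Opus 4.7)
The plan is to exploit the $\Lambda_i$-independence of $U_{P^h,\Lambda_h}$ to exchange integration and differentiation,
\[\int_{\mathbb{R}^n} U_{P^h,\Lambda_h}^{\frac{n+2s}{n-2s}} \frac{\partial U_{P^i,\Lambda_i}}{\partial \Lambda_i}\,dx \;=\; \frac{\partial}{\partial \Lambda_i}\int_{\mathbb{R}^n} U_{P^h,\Lambda_h}^{\frac{n+2s}{n-2s}} U_{P^i,\Lambda_i}\,dx \;=:\; \frac{\partial I_{ih}}{\partial \Lambda_i}.\]
It then suffices to expand the scalar interaction $I_{ih}$ in the form $I_{ih} = c_0\,\varepsilon_{ih} + R_{ih}$, with an error $R_{ih}$ of order $\varepsilon_{ih}^{n/(n-2s)}\log \varepsilon_{ih}$ whose $\Lambda_i$-derivative contributes $\Lambda_i^{-1}$ times the same order. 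This is of the same flavour as the expansions cited from \cite{LiYY2016} in Lemma \ref{lm1}.

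First I would compute $I_{ih}$ exactly in scaled variables. Setting $y = \Lambda_h(x-P^h)$ and $a := \Lambda_h(P^i - P^h)$, a direct calculation gives
\[I_{ih} = C_0(n,s)^{\frac{2n}{n-2s}}(\Lambda_i\Lambda_h)^{\frac{n-2s}{2}}\int_{\mathbb{R}^n}\frac{dy}{(1+|y|^2)^{(n+2s)/2}\bigl(\Lambda_h^2 + \Lambda_i^2|y-a|^2\bigr)^{(n-2s)/2}}.\]
The essential observation is the algebraic identity $\Lambda_h^2 + \Lambda_i^2(1+|a|^2) = \Lambda_i\Lambda_h\,\varepsilon_{ih}^{-2/(n-2s)}$, so that freezing $|y-a|^2 \leadsto 1+|a|^2$ inside the integrand pulls the bracket outside the integral and, combined with the prefactor, produces precisely $c_0\,\varepsilon_{ih}$.

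For the remainder I would split $\mathbb{R}^n$ at $|y|=|a|/2$. On $\{|y|>|a|/2\}$ the decay of $(1+|y|^2)^{-(n+2s)/2}$ alone bounds the contribution by $O(\varepsilon_{ih}^{n/(n-2s)})$. On $\{|y|\leq|a|/2\}$, I would write $|y-a|^2 = (1+|a|^2) + (|y|^2 - 2y\cdot a - 1)$ and apply the mean-value theorem to $t\mapsto(\Lambda_h^2 + \Lambda_i^2 t)^{-(n-2s)/2}$; the resulting error factor is bounded by $C\Lambda_i^2(|y||a|+|y|^2+1)(\Lambda_h^2 + \Lambda_i^2|a|^2)^{-n/2}$, which integrates against the fast-decay kernel to $O(\varepsilon_{ih}^{n/(n-2s)})$, with a logarithm appearing in the borderline integrability case.

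Finally, since every quantity in the expansion is a power of $\Lambda_i$ times a smooth function of $\Lambda_i|P^i-P^h|$, differentiation in $\Lambda_i$ introduces only a factor of $\Lambda_i^{-1}$, whence $\partial_{\Lambda_i}I_{ih} = c_0\,\partial_{\Lambda_i}\varepsilon_{ih} + \Lambda_i^{-1}O(\varepsilon_{ih}^{n/(n-2s)}\log\varepsilon_{ih})$, matching the statement. The main obstacle is tracking the borderline logarithm in the inner-region error analysis and verifying that the scaling argument is tight enough that differentiation in $\Lambda_i$ does not worsen the estimate.
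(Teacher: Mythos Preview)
Your approach is essentially the standard Bahri-type interaction estimate that the paper invokes by reference (the paper gives no argument of its own beyond citing \cite{Bahri1989} and \cite{Chen2016}); the change of variables, the algebraic identity producing $c_0\,\varepsilon_{ih}$, and the inner/outer splitting for the remainder are exactly the ingredients of that classical computation.

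One point deserves a bit more care than your closing sentence gives it: from $R_{ih}=O(\varepsilon_{ih}^{n/(n-2s)}\log\varepsilon_{ih})$ one cannot directly infer a bound on $\partial_{\Lambda_i}R_{ih}$, and your scaling heuristic (``a power of $\Lambda_i$ times a smooth function of $\Lambda_i|P^i-P^h|$'') is not literally how $\varepsilon_{ih}$ depends on $\Lambda_i$. The clean way---and what the cited references actually do---is to carry out the same splitting argument directly on the integral $\int U_{P^h,\Lambda_h}^{(n+2s)/(n-2s)}\partial_{\Lambda_i}U_{P^i,\Lambda_i}$, noting that $|\partial_{\Lambda_i}U_{P^i,\Lambda_i}|\le C\Lambda_i^{-1}U_{P^i,\Lambda_i}$ pointwise, so the extra $\Lambda_i^{-1}$ appears without any loss and the remainder estimate goes through verbatim. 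You flag this as the main obstacle, which is right; once handled this way there is no difficulty.
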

\begin{proof}
The proof of this lemma is rather standard. We  refer to \cite{Bahri1989} and \cite{Chen2016} for ideas.

\end{proof}

\begin{proposition}\label{prop3}
It holds that
\begin{equation*}
\frac{\partial I}{\partial \Lambda_i}(W_m)=-\frac{c_1}{\Lambda_i^{\beta+1}\lambda^\beta}+\sum_{h\not=i}\frac{c_2}{ \Lambda_i(\Lambda_i\Lambda_h)^{\frac{n-2s}2}|X^i-X^h|^{n-2s}}+O\left(\frac{|P^i-X^i|^{\min\{2,\beta-1\}}}{ \lambda^\beta}\right)+o(\lambda^{-\beta}),
\end{equation*}
where $c_1=\frac{(n-2s)\beta C_0(n,s)^{\frac {2n}{n-2s}}(-\sum_h a_h)}{2n}\int_{\mathbb{R}^n}\frac{|x_1|^\beta}{ (1+|x|^2)^n}>0$ and $c_2=\frac{n-2s}2 C_0(n,s)^{\frac{2n}{n-2s}}\int_{\mathbb{R}^n}\frac1{(1+|y|^2)^{\frac{n+2s}2}}$.
\end{proposition}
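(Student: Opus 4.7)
The strategy is to combine equation \eqref{8} with Lemmas \ref{lm2}, \ref{lm1}, \ref{lm3} and the estimates \eqref{9}, \eqref{10} so that all the nonlocal integrals collapse into two explicit leading contributions: one from the deviation $K(x/\lambda)-1$ near the bump $U_{P^i,\Lambda_i}$, and one from the mutual interaction of distinct bumps. First I start from \eqref{8} and split the sum in $A:=\sum_{h}\int U_{P^h,\Lambda_h}^{(n+2s)/(n-2s)}\,\partial_{\Lambda_i}U_{P^i,\Lambda_i}$ into the term $h=i$ plus the terms $h\neq i$. By the scale invariance $\int U_{P^i,\Lambda_i}^{2n/(n-2s)}\equiv\mathrm{const}$, the diagonal term vanishes, so $A=\sum_{h\neq i}\int U_{P^h,\Lambda_h}^{(n+2s)/(n-2s)}\,\partial_{\Lambda_i}U_{P^i,\Lambda_i}$.

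Next I apply Lemma \ref{lm2} to the remaining integral $B:=\int K(x/\lambda)W_m^{(n+2s)/(n-2s)}\partial_{\Lambda_i}U_{P^i,\Lambda_i}$, separating again the diagonal piece $\int K U_{P^i,\Lambda_i}^{(n+2s)/(n-2s)}\partial_{\Lambda_i}U_{P^i,\Lambda_i}$ from the off-diagonal pieces. For the off-diagonal pieces I invoke \eqref{10} to replace $K(x/\lambda)$ by $1$ in $\sum_{h\neq i}\int K U_{P^h,\Lambda_h}^{(n+2s)/(n-2s)}\partial_{\Lambda_i}U_{P^i,\Lambda_i}$, and \eqref{9} to identify $\frac{n+2s}{n-2s}\sum_{h\neq i}\int K U_{P^i,\Lambda_i}^{4s/(n-2s)}U_{P^h,\Lambda_h}\partial_{\Lambda_i}U_{P^i,\Lambda_i}$ with the same quantity $\sum_{h\neq i}\int U_{P^h,\Lambda_h}^{(n+2s)/(n-2s)}\partial_{\Lambda_i}U_{P^i,\Lambda_i}$, each up to an error $O\!\bigl(\lambda^{-2s}|X^i-X^h|^{-(n-2s)}\bigr)$. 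Summing these errors using $|X^i-X^h|\geq\lambda l|x_i-x_h|$ with $x_i,x_h\in Q_m\subset\mathbb{Z}^k$ and $\sum_{h\neq i}|x_i-x_h|^{-(n-2s)}<\infty$ (since $k<(n-2s)/2$), and using $(\lambda l)^{n-2s}=\lambda^\beta$, the total off-diagonal error is $O(\lambda^{-2s-\beta})=o(\lambda^{-\beta})$, and similarly the $(\lambda l)^{-n}$ term from Lemma \ref{lm2} is $o(\lambda^{-\beta})$.

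Subtracting $B$ from $A$ and canceling, the expression reduces to
\[
\frac{\partial I}{\partial\Lambda_i}(W_m)=-\int_{\mathbb{R}^n}K\!\left(\tfrac{x}{\lambda}\right)U_{P^i,\Lambda_i}^{\frac{n+2s}{n-2s}}\frac{\partial U_{P^i,\Lambda_i}}{\partial\Lambda_i}\;-\;\sum_{h\neq i}\int_{\mathbb{R}^n}U_{P^h,\Lambda_h}^{\frac{n+2s}{n-2s}}\frac{\partial U_{P^i,\Lambda_i}}{\partial\Lambda_i}\;+\;o(\lambda^{-\beta}).
\]
The first term is handled directly by \eqref{6} of Lemma \ref{lm1}: using again the scale invariance to kill the $K\equiv1$ part, what remains is the contribution of $K-1$, whose leading order is $-c_1\Lambda_i^{-(\beta+1)}\lambda^{-\beta}$ with $c_1>0$ because $\sum_h a_h<0$ by $(H_3)$, plus the error $O(|P^i-X^i|^{\min\{2,\beta-1\}}\lambda^{-\beta})+o(\lambda^{-\beta})$ asserted in the Proposition.

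For the interaction sum I apply Lemma \ref{lm3} and expand $\varepsilon_{ih}=\bigl(\tfrac{\Lambda_i}{\Lambda_h}+\tfrac{\Lambda_h}{\Lambda_i}+\Lambda_i\Lambda_h|P^i-P^h|^2\bigr)^{-(n-2s)/2}$ in the regime $|P^i-P^h|\gtrsim\lambda l\to\infty$, where the quadratic piece dominates, giving $\varepsilon_{ih}=(\Lambda_i\Lambda_h)^{-(n-2s)/2}|P^i-P^h|^{-(n-2s)}(1+O((\lambda l)^{-2}))$ and therefore
\[
-c_0\frac{\partial\varepsilon_{ih}}{\partial\Lambda_i}=\frac{\tfrac{n-2s}{2}c_0}{\Lambda_i(\Lambda_i\Lambda_h)^{(n-2s)/2}|P^i-P^h|^{n-2s}}+o\!\left(\frac{1}{(\lambda l)^{n-2s}}\right).
\]
Finally I replace $|P^i-P^h|$ by $|X^i-X^h|$ using $|P^i-X^i|\le\tfrac12$; the resulting relative error is $O((\lambda l)^{-1})$ on each term, and summing over $h\neq i$ with the same lattice-convergence argument as above keeps everything bounded by $C(\lambda l)^{-(n-2s)}=C\lambda^{-\beta}$, with the correction being $o(\lambda^{-\beta})$. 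This yields the stated interaction term with $c_2=\tfrac{n-2s}{2}C_0(n,s)^{2n/(n-2s)}\int(1+|y|^2)^{-(n+2s)/2}\,dy$ and completes the expansion.

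The main technical point is the bookkeeping of cross-error terms: one must verify that each of the three sources of error, namely the residual from Lemma \ref{lm2}, the $K\mapsto 1$ replacement errors from \eqref{9}--\eqref{10}, and the $|P^i-P^h|\mapsto|X^i-X^h|$ replacement in Lemma \ref{lm3}, assemble through $(\lambda l)^{n-2s}=\lambda^\beta$ and $\sum_{h\neq i}|x_i-x_h|^{-(n-2s)}<\infty$ (which uses $k<\tfrac{n-2s}{2}$) into terms of order $o(\lambda^{-\beta})$, matching the stated remainder.
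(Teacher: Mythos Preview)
Your proposal is correct and follows essentially the same route as the paper: both assemble the expansion from equation \eqref{8} via Lemmas \ref{lm2}, \ref{lm1}, \ref{lm3} and the replacement estimates \eqref{9}--\eqref{10}, together with the relation $(\lambda l)^{n-2s}=\lambda^\beta$. Your write-up is simply more explicit about the cancellation in $A-B$ and the summability of the error terms; the paper compresses all of this into a one-line reference to the lemmas and records only the expansion $\partial_{\Lambda_i}\varepsilon_{ih}=-\tfrac{n-2s}{2\Lambda_i(\Lambda_i\Lambda_h)^{(n-2s)/2}|X^i-X^h|^{n-2s}}+O(|X^i-X^h|^{-(n-2s+1)})$, which is the same computation you carry out in two steps (first $|P^i-P^h|$, then $|X^i-X^h|$).
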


\begin{proof}
This proposition is a consequence of Lemma \ref{lm2}, \eqref{10}, \eqref{9}, \eqref{6}, Lemma \ref{lm3} and the definition of $\lambda$. We need to remind that
\begin{equation*}
\frac{\partial\varepsilon_{ih}}{\partial \Lambda_i}= -\frac{n-2s}{2\Lambda_i(\Lambda_i\Lambda_h)^{\frac{n-2s}2}|X^i-X^h|^{n-2s}}+O(\frac1{|X^i-X^h|^{n-2s+1}}),
\end{equation*}
which is directly from   $P_h\in B_{\frac12}(X^h)$ and the definition of $\{X^h\}_{h=1}^{(m+1)^k}$.
\end{proof}

\begin{proposition}\label{prop2}
We have
\begin{equation*}
\frac {\partial I} {\partial P^i_j}(W_m)=-\frac{c_3a_j}{\Lambda_i^{\beta-2}\lambda^\beta}(P^i_j-X^i_j) +O\left(\frac{|P^i-X^i|^2}{\lambda^\beta}\right)+o(\lambda^{-\beta}),
\end{equation*}
where $c_3=(n-2s)C_0(n,s)^{\frac {2n}{n-2s}}\beta\int \frac {|x_1|^\beta}{(1+|x|^2)^{n+1}}$.
\end{proposition}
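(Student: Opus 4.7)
The plan is to mirror the argument for Proposition \ref{prop3} step by step, replacing $\partial/\partial \Lambda_i$ with $\partial/\partial P^i_j$ throughout. Starting from \eqref{7}, I would first develop three analogs of the supporting estimates: an analog of Lemma \ref{lm2} decomposing $\int K(x/\lambda) W_m^{(n+2s)/(n-2s)}\partial_{P^i_j}U_{P^i,\Lambda_i}$ into a diagonal piece plus cross pieces; analogs of \eqref{9} and \eqref{10} that replace $K$ by $1$ in the cross terms up to an error $O(\lambda^{-2s}|X^i-X^h|^{-(n-2s)})$; and an analog of Lemma \ref{lm3} giving $\int U_{P^h,\Lambda_h}^{(n+2s)/(n-2s)}\partial_{P^i_j}U_{P^i,\Lambda_i} = c_0\,\partial_{P^i_j}\varepsilon_{ih} + \text{lower order}$. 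Since $\partial_{P^i_j} U_{P^i,\Lambda_i}$ has the same pointwise decay $C(1+|x-P^i|)^{-(n-2s)}$ as $\partial_{\Lambda_i}U_{P^i,\Lambda_i}$, the region-splitting and Taylor-expansion arguments used in the proofs of Lemma \ref{lm2}, \eqref{9}, \eqref{10} and Lemma \ref{lm3} transfer verbatim.

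Two elementary simplifications specific to position derivatives cut the bookkeeping in half. The translation invariance of $\int U_{P^i,\Lambda_i}^{2n/(n-2s)}$ gives $\int U_{P^i,\Lambda_i}^{(n+2s)/(n-2s)}\partial_{P^i_j}U_{P^i,\Lambda_i} = 0$, and the conformal identity $(-\Delta)^s U_{P^h,\Lambda_h} = U_{P^h,\Lambda_h}^{(n+2s)/(n-2s)}$ combined with integration by parts yields $\int U_{P^h,\Lambda_h}^{(n+2s)/(n-2s)}\partial_{P^i_j}U_{P^i,\Lambda_i} = \frac{n+2s}{n-2s}\int U_{P^i,\Lambda_i}^{4s/(n-2s)}U_{P^h,\Lambda_h}\partial_{P^i_j}U_{P^i,\Lambda_i}$. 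Assembling these, the expansion of \eqref{7} collapses to
\[\frac{\partial I}{\partial P^i_j}(W_m) = -\int K(x/\lambda)\, U_{P^i,\Lambda_i}^{(n+2s)/(n-2s)}\partial_{P^i_j}U_{P^i,\Lambda_i} - c_0\sum_{h\neq i}\partial_{P^i_j}\varepsilon_{ih} + \text{error terms},\]
and plugging in \eqref{5} of Lemma \ref{lm1} for the first integral reproduces the asserted main contribution $-c_3 a_j \Lambda_i^{2-\beta}\lambda^{-\beta}(P^i_j - X^i_j)$, together with the $O(|P^i-X^i|^2/\lambda^\beta) + o(\lambda^{-\beta})$ remainder.

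The main obstacle, and the only genuinely new estimate compared with Proposition \ref{prop3}, is to show that $\sum_{h\neq i}\partial_{P^i_j}\varepsilon_{ih}$ is absorbed into $o(\lambda^{-\beta})$ rather than contributing at leading order (as it did for the $\Lambda_i$-derivative, producing the $c_2$-term in Proposition \ref{prop3}). The key observation is that differentiating $\varepsilon_{ih} \approx (\Lambda_i\Lambda_h)^{-(n-2s)/2}|P^i-P^h|^{-(n-2s)}$ in a position variable generates an extra factor $(P^i_j-P^h_j)/|P^i-P^h|^2$, giving $|\partial_{P^i_j}\varepsilon_{ih}| \leq C|X^i-X^h|^{-(n-2s+1)}$ at leading order. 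Since $k < (n-2s)/2$, the lattice sum $\sum_{h\neq i,\, X^h\in \lambda l\,\mathbb{Z}^k\times\{0\}}|X^h-X^i|^{-(n-2s+1)}$ converges to a bounded multiple of $(\lambda l)^{-(n-2s+1)} = (\lambda l)^{-1}\lambda^{-\beta}$ by the defining relation $\lambda^\beta = (\lambda l)^{n-2s}$, and is therefore genuinely $o(\lambda^{-\beta})$. (For the $n-k$ directions $j>k$ one moreover has $X^i_j=X^h_j=0$, so the sum vanishes identically.) This single additional decay factor $(\lambda l)^{-1}$, coming from the gradient structure of $\varepsilon_{ih}$, is precisely what distinguishes the conclusion of Proposition \ref{prop2} from that of Proposition \ref{prop3} and closes the proof.
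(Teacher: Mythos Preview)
Your approach is correct, but the paper takes a shorter and more direct route that avoids most of the machinery you propose. Rather than porting the region-splitting of Lemma~\ref{lm2}, the $K\to 1$ replacement lemmas \eqref{9}--\eqref{10}, and the Bahri-type expansion of Lemma~\ref{lm3} to the $P^i_j$-derivative, the paper simply Taylor-expands $W_m^{\frac{n+2s}{n-2s}}$ globally around $U_{P^i,\Lambda_i}^{\frac{n+2s}{n-2s}}$ (giving \eqref{11}) and controls the two remainder terms \eqref{12}--\eqref{13} directly. The cross term $\frac{n+2s}{n-2s}\int K(\tfrac{x}{\lambda})U_{P^i,\Lambda_i}^{\frac{4s}{n-2s}}U_{P^h,\Lambda_h}\partial_{P^i_j}U_{P^i,\Lambda_i}$ is then handled by the translation trick \eqref{15}: rewrite it as $\partial_{P^i_j}\int K(\tfrac{x}{\lambda})U_{P^i,\Lambda_i}^{\frac{n+2s}{n-2s}}U_{P^h,\Lambda_h}$, shift $x\mapsto x+P^i$, and let the derivative fall on $K$ (picking up $\lambda^{-1}$) or on $U_{P^h-P^i,\Lambda_h}$ (picking up one extra power of $|X^i-X^h|^{-1}$). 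This yields $O(\lambda^{-1}|X^i-X^h|^{-(n-2s)})+O(|X^i-X^h|^{-(n-s)})$ in one line, summing to $o(\lambda^{-\beta})$; similarly \eqref{14} handles the first sum in \eqref{7}. The $\varepsilon_{ih}$ expansion is never invoked. Your route would of course also work, and your observation that $|\partial_{P^i_j}\varepsilon_{ih}|\le C|X^i-X^h|^{-(n-2s+1)}$ is exactly the right mechanism; the paper's argument extracts the same extra decay more cheaply, without passing through the asymptotic identification of the interaction integral with $c_0\,\partial_{P^i_j}\varepsilon_{ih}$. (A minor remark: $\partial_{P^i_j}U_{P^i,\Lambda_i}$ actually decays like $(1+|x-P^i|)^{-(n-2s+1)}$, one power better than you state, which only helps.)
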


\begin{proof}
We need to estimate each term on the right hand side of the equality \eqref{7}. By simple calculation, we have
\begin{equation*}
\left|W_m^{\frac{n+2s}{n-2s}}-U_{P^i,\Lambda_i}^{\frac{n+2s}{n-2s}}-\frac{n+2s}{n-2s}U_{P^i, \Lambda_i}^{\frac{4s}{n-2s}}\sum_{h\not=i} U_{P^h,\Lambda_h}\right|\leq\left\{\begin{array}{ll}\left(\sum_{h\not=i} U_{P^h,\Lambda_h}\right)^{\frac{n+2s}{n-2s}}, &\mbox{if $U_{P^i,\Lambda_i}\leq\sum_{h\not=i} U_{P^h,\Lambda_h}$,} \\ U_{P^i,\Lambda_i}^{\frac{6s-n}{n-2s}}\left(\sum_{h\not=i}U_{P^h,\Lambda_h} \right)^2, &\mbox{otherwise.}\end{array}\right.
\end{equation*}
Then we get
\begin{eqnarray}\label{11}
\nonumber&&\int_{\mathbb{R}^n}K\left(\frac x \lambda\right)W_m^{\frac{n+2s}{n-2s}}\frac{\partial U_{P^i,\Lambda_i}}{\partial P^i_j} \\
&=&\int_{\mathbb{R}^n}K\left(\frac x \lambda\right)U_{P^i,\Lambda_i}^{\frac{n+2s}{n-2s}}\frac{\partial U_{P^i,\Lambda_i}}{\partial P^i_j}+ \frac{n+2s}{n-2s}\int_{\mathbb{R}^n}K\left(\frac x \lambda\right)U_{P^i,\Lambda_i}^{\frac{4s}{n-2s}}\sum_{h\not=i}U_{P^h,\Lambda_h}\frac{\partial U_{P^i,\Lambda_i}}{\partial P^i_j} \\
\nonumber&&+O\left(\int_{\mathbb{R}^n}(\sum_{h\not=i} U_{P^h,\Lambda_h})^{\frac{n+2s}{n-2s}}\frac{\partial U_{P^i,\Lambda_i}}{\partial P^i_j}\right)+O\left(\int_{U_{P^i,\Lambda_i}>\sum_{h\not=i} U_{P^h,\Lambda_h}}U_{P^i,\Lambda_i}^{\frac{6s-n}{n-2s}}(\sum_{h\not=i}U_{P^h,\Lambda_h} )^2\frac{\partial U_{P^i,\Lambda_i}}{\partial P^i_j}\right).
\end{eqnarray}
We first estimate the error terms above. Since $n>2s+2>4s$, we get $(n-s)\frac{n-2s}{n+2s}>\frac{n-2s}2>k$. From Lemma \ref{lm4} we have
\begin{eqnarray}\label{12}
\nonumber\int_{\mathbb{R}^n}(\sum_{h\not=i} U_{P^h,\Lambda_h})^{\frac{n+2s}{n-2s}}\big|\frac{\partial U_{P^i,\Lambda_i}}{\partial P^i_j}\big|&\leq& C\int_{\mathbb{R}^n}\left(\sum_{h\not=i}\frac1{(1+|y-X^h|)^{n-2s} }\right)^{\frac{n+2s}{n-2s}}\frac1{(1+|y-X^i|)^{n-2s+1}} \\
\nonumber&\leq& C\int_{\mathbb{R}^n}\left(\sum_{h\not=i}\frac1{(1+|y-X^h|)^{n-2s}}\frac1{(1+|y-X^i|)^{(n-s)\frac{n-2s}{n+2s}}} \right)^{\frac{n+2s}{n-2s}} \\
\nonumber&\leq& \left(\sum_{h\not=i}\frac1{|X^h-X^i|^{(n-s)\frac{n-2s}{n+2s}}}\right)^{\frac{n+2s}{n-2s}}\int_{ \mathbb{R}^n}\frac1{(1+|y-X^i|)^{n+2s}} \\
&\leq& C(\lambda l)^{-(n-s)}.
\end{eqnarray}
The similar argument yields
\begin{eqnarray}\label{13}
\nonumber\int_{U_{P^i,\Lambda_i}>\sum_{h\not=i} U_{P^h,\Lambda_h}}U_{P^i,\Lambda_i}^{\frac{6s-n}{n-2s}}(\sum_{h\not=i}U_{P^h,\Lambda_h} )^2\left|\frac{\partial U_{P^i,\Lambda_i}}{\partial P^i_j}\right|&\leq& \int_{\mathbb{R}^n} (\sum_{h\not=i}U_{P^h,\Lambda_h} )^{\frac{n}{n-2s}}U_{P^i,\Lambda_i}^{\frac {2s}{n-2s}}\big|\frac{\partial U_{P^i,\Lambda_i}}{\partial P^i_j}\big| \\
&\leq& C(\lambda l)^{-n}.
\end{eqnarray}

For $h\not=i$, we  see that
\begin{eqnarray}\label{15}
\nonumber&&\frac{n+2s}{n-2s}\int_{\mathbb{R}^n}K\left(\frac x \lambda\right)U_{P^i,\Lambda_i}^{\frac{4s}{n-2s}}U_{P^h,\Lambda_h}\frac{\partial U_{P^i,\Lambda_i}}{\partial P^i_j} \\
\nonumber&=&\frac{\partial}{\partial P^i_j}\int_{\mathbb{R}^n}K\left(\frac x \lambda\right)U_{P^i,\Lambda_i}^{\frac{n+2s}{n-2s}}U_{P^h,\Lambda_h} \\
\nonumber&=&\frac1\lambda\int_{\mathbb{R}^n}\frac{\partial K}{\partial t_j}\left(\frac{x+P^i}{\lambda}\right)U_{0,\Lambda_i}^{\frac{n+2s}{n-2s}}U_{P^h-P^i,\Lambda_h}- \int_{\mathbb{R}^n}K\left(\frac{x+P^i}{\lambda}\right)U_{0,\Lambda_i}^{\frac{n+2s}{n-2s}}\frac{\partial U_{P^h-P^i,\Lambda_h}}{\partial P^i_j} \\
&=&O(\lambda^{-1}\frac1{|X^i-X^h|^{n-2s}})+O(\frac1{|X^i-X^h|^{n-s}}).
\end{eqnarray}
The first part of \eqref{7} can be estimated as
\begin{eqnarray}\label{14}
\nonumber\int_{\mathbb{R}^n}\sum_{h\not=i} U_{P^h,\Lambda_h}^{\frac{n+2s}{n-2s}}\big|\frac{\partial U_{P^i,\Lambda_i}}{\partial P^i_j}\big|
&\leq&\sum_{h\not=i}\int_{\mathbb{R}^n}\frac C{(1+|x-X^h|)^{n+2s}}\frac1{(1+|x-X^i|)^{n-2s+1}} \\
\nonumber&\leq& \sum_{h\not=i}\frac C{|X^h-X^i|^{n-s}}\int_{\mathbb{R}^n}\frac1{(1+|y-X^h|)^{n+s+1}} \\
&\leq& C(\lambda l)^{-(n-s)}.
\end{eqnarray}
Then the expansion of $\frac{\partial I}{\partial P^i_j}(W_m)$ follows from \eqref{5}, \eqref{11}, \eqref{12}, \eqref{13}, \eqref{15} and \eqref{14}.

\end{proof}

\bibliographystyle{amsplain}

\end{document}